\newtheorem{thm}{Theorem}
\newtheorem{prop}[thm]{Proposition}
\newtheorem{lem}[thm]{Lemma}
\newtheorem{cor}[thm]{Corollary}
\newtheoremstyle{CorNum}
        {\topsep}{\topsep}              
        {\itshape}                      
        {}                              
        {\bfseries}                     
        {.}                             
        { }                             
        {\thmname{#1}\thmnote{ \bfseries #3}}
    \theoremstyle{CorNum}
    \newtheorem{corn}{Corollary}
\theoremstyle{definition}
\newtheorem{ex}[thm]{Example}
\newtheorem{rem}[thm]{Remark}
\newtheorem{defn}[thm]{Definition}
\title{Inequivalent contact structures on Boothby-Wang 5-manifolds}
\author{M.~J.~D.~Hamilton}
\address{      Universit\"at Stuttgart\\
               Fachbereich Mathematik\\
               Pfaffenwaldring 57\\
               70569 Stuttgart\\
               Germany}
\email{mark.hamilton@math.lmu.de}
\date{\today}
\subjclass[2000]{53D10, 53D05, 53D35}
\keywords{contact structure, 5-manifold, symplectic, 4-manifold, canonical class}
\begin{document}

\begin{abstract}We consider contact structures on simply-connected 5-manifolds which arise as circle bundles over simply-connected symplectic 4-manifolds and show that invariants from contact homology are related to the divisibility of the canonical class of the symplectic structure. As an application we find new examples of inequivalent contact structures in the same equivalence class of almost contact structures with non-zero first Chern class.
\end{abstract}

\maketitle

\section{Introduction}
Suppose that $(M,\omega)$ is a symplectic manifold so that the symplectic form $\omega$ represents an integral class $[\omega]$ in $H^2(M;\mathbb{R})$. We can consider the circle bundle $X$ over $M$ whose Euler class is equal to the class $[\omega]$. The Boothby-Wang construction \cite{BoWa} associates to each symplectic manifold $(M,\omega)$ with an integral symplectic class a contact structure $\xi$ on the manifold $X$. In this article we are interested in the case where $X$ is a simply-connected closed 5-manifold. In Section \ref{sect topology circle bundles} we will show that in this case the 4-manifold $M$ also has to be simply-connected and the Euler class $[\omega]$ is indivisible. In addition, it follows that the integral homology of $X$ is torsion free. By the classification of simply-connected closed 5-manifolds due to D.~Barden \cite{Bar} it is possible to determine the 5-manifold $X$ up to diffeomorphism: $X$ is diffeomorphic either to the connected sum
\begin{equation*}
\#(b_2(M)-1)S^2\times S^3
\end{equation*}
or to
\begin{equation*}
\#(b_2(M)-2)S^2\times S^3\#S^2\tilde{\times}S^3,
\end{equation*}
depending on whether $X$ is spin or non-spin. Here $S^2\tilde{\times}S^3$ denotes the non-trivial $S^3$-bundle over $S^2$. Moreover, the 5-manifold $X$ is spin if and only if $M$ is spin or the mod $2$ reduction of the Euler class $[\omega]$ is equal to the second Stiefel-Whitney class of $M$. As a consequence of this diffeomorphism classification one can construct Boothby-Wang contact structures on the same simply-connected 5-manifold $X$ using different simply-connected symplectic 4-manifolds $(M,\omega)$ and $(M',\omega')$. Up to the spin condition the 4-manifolds only need to have the same second Betti number. 

In Section \ref{sect contact structures} we consider contact structures and almost contact structures on simply-connected 5-manifolds in general. In particular, we consider the notion of {\em equivalence} of these structures, i.e.~when two such structures can be made identical by a sequence of deformations and self-diffeomorphisms of the manifold. We will show that two almost contact structures are equivalent on a simply-connected 5-manifold if and only if their first Chern classes have the same maximal divisibility. We call this divisibility the {\em level} of the (almost) contact structure. Hence contact structures on the same level are equivalent as almost contact structures.

Since symplectic 4-manifolds exist in great number it is likely that many of the induced Boothby-Wang contact structures on the same 5-manifold $X$ are not equivalent as contact structures, even if they are equivalent as almost contact structures. In Section \ref{sect construct 4-mfds} and \ref{sect contact homology} we will show that invariants derived from contact homology defined in \cite{EGH} are related to the divisibility of the canonical class of the symplectic structure on the simply-connected 4-manifold. This is summarized in the following main result:
\vspace{0.2cm}
\begin{corn}[\ref{cor different contact str}] Let $X$ be a closed simply-connected 5-manifold which can be realized in two different ways as a Boothby-Wang fibration over closed simply-connected symplectic 4-manifolds $(M_1,\omega_1)$ and $(M_2,\omega_2)$, whose symplectic forms represent integral and indivisible classes:
\begin{equation*}
\xymatrix{&X \ar[dl]_{\pi_1}\ar[dr]^{\pi_2}&\\
          (M_1,\omega_1)&&(M_2,\omega_2)}
\end{equation*}
Denote the associated Boothby-Wang contact structures on $X$ by $\xi_1$ and $\xi_2$ and the canonical classes of the symplectic structures by $K_1$ and $K_2$. Let $d(\xi_i)$ denote the divisibility of the first Chern class of $\xi_i$ and $d(K_i)$ the divisibility of $K_i$. Then:
\begin{itemize}
\item The almost contact structures underlying $\xi_1$ and $\xi_2$ are equivalent if and only if $d(\xi_1)=d(\xi_2)$.
\end{itemize}
Suppose that $\xi_1$ and $\xi_2$ are equivalent as contact structures.
\begin{itemize}
\item If $d(\xi_1)=d(\xi_2)=0$, then $d(K_1)=d(K_2)$.
\item If $d(\xi_1)=d(\xi_2)\neq 0$, then either both $d(K_1),d(K_2)\leq 3$ or $d(K_1)=d(K_2)\geq 4$.
\end{itemize}
\end{corn}
\vspace{0.1cm}
Hence the existence of inequivalent contact structures on simply-connected 5-manifolds with torsion free homology is connected to the geography question of simply-connected 4-manifolds with divisible canonical class. As an application we find in Section \ref{sect applications} new examples of inequivalent contact structures in the same equivalence class of almost contact structures with non-zero first Chern class. To state a result we consider the following purely number theoretic definition:  Let $d\geq 4$ be an integer. Consider the number of divisors greater or equal to four of $d$. Then $N(d)$ is this number plus one. If $d$ is even, consider the number of odd divisors greater or equal to four of $d$. Then $N'(d)$ is this number plus one. Using geography results for symplectic homotopy elliptic surfaces we get:
\vspace{0.2cm}
\begin{corn}[\ref{cor on existence of contact}] Let $n\geq 1$ be an arbitrary integer.
\begin{enumerate}
\item On every odd level $d\geq 5$ the 5-manifold $\#(12n-4)S^2\times S^3\#S^2\tilde{\times}S^3$ admits at least $N(d)$ inequivalent contact structures.
\item On every even level $d\geq 4$ the 5-manifold $\#(24n-3)S^2\times S^3$ admits at least $N(d)$ inequivalent contact structures.
\item On every even level $d\geq 4$ the 5-manifold $\#(24n-15)S^2\times S^3$ admits at least $N'(d)$ inequivalent contact structures.
\end{enumerate}
\end{corn}
\vspace{0.1cm}
Hence as the level increases we get many inequivalent contact structures with non-zero first Chern class. A related discussion has appeared in \cite{P}. Inequivalent contact structures on simply-connected 5-manifolds with vanishing first Chern class have been found before by O.~van Koert in \cite{Koe1}. Also I.~Ustilovsky \cite{U} found infinitely many contact structures on the sphere $S^5$ and F.~Bourgeois \cite{Bou} on $T^2\times S^3$ and $T^5$, both in the case of vanishing first Chern class.  

\section{Classification of simply-connected 5-manifolds}

Throughout this article we use for a topological space $Y$ the abbreviations $H_*(Y)$ and $H^*(Y)$ to denote the homology and cohomology groups of $Y$ with $\mathbb{Z}$-coefficients. Other coefficients will be denoted explicitly. 

In this section we recall the classification of simply-connected closed 5-manifolds due to D.~Barden \cite{Bar} and refer to this article for further details. Let $X$ denote a smooth closed oriented 5-manifold. For each pair of elements $\eta,\xi$ in the torsion subgroup $\text{Tor}\,H_2(X)$ there exists a {\em linking number} $b(\eta,\xi)$ in $\mathbb{Q}/\mathbb{Z}$. These numbers define a skew-symmetric non-degenerate bilinear form
\begin{equation*}
b\colon \text{Tor}\,H_2(X)\times \text{Tor}\,H_2(X)\longrightarrow \mathbb{Q}/\mathbb{Z},
\end{equation*}
called the {\em linking form}. Suppose that the 5-manifold $X$ is simply-connected. Then the first integral homology group vanishes and the universal coefficient theorem implies that there exists an isomorphism 
\begin{equation*}
H^2(X;\mathbb{Z}_2)\cong \text{Hom}(H_2(X),\mathbb{Z}_2),
\end{equation*}
via evaluation of cohomology on homology classes. Hence we can think of the second Stiefel-Whitney class $w_2(X)\in H^2(X;\mathbb{Z}_2)$ as a homomorphism
\begin{equation*}
w_2(X)\colon H_2(X)\longrightarrow \mathbb{Z}_2.
\end{equation*} 
The following theorem is the classification theorem for simply-connected 5-manifolds and was proved by Barden \cite[Theorem 2.2]{Bar} using surgery theory:
\begin{thm}\label{Barden} Let $X,Y$ be simply-connected closed oriented 5-manifolds. Suppose that $\theta\colon H_2(X)\rightarrow H_2(Y)$ is an isomorphism preserving the linking forms on the torsion subgroups and such that $w_2(Y)\circ\theta=w_2(X)$. Then there exists an orientation preserving diffeomorphism $f\colon X\rightarrow Y$ such that $f_*=\theta$. 
\end{thm}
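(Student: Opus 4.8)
The statement is Barden's classification theorem, and the plan is to prove it by his surgery\nobreakdash-theoretic method: reduce $X$ and $Y$ to a handle\nobreakdash-theoretic normal form in which $H_2$, its linking form and $w_2$ are visible, and then realise $\theta$ by handle moves; orientations are fixed throughout. First I would take a handle decomposition of $X$. Since $\pi_1(X)=1$, all $1$-handles can be traded or cancelled, and dually, using Poincar\'{e} duality together with $\pi_1(X)=1$, so can all $4$-handles, leaving $X$ built from one $0$-handle, some $2$-handles, some $3$-handles and one $5$-handle. After further slides the $2$-handles may be assumed attached along a framed unlink in the boundary $S^4$ of the $0$-handle; since a framing of such a circle is an element of $\pi_1(SO(3))\cong\mathbb{Z}/2$, the union $W_2$ of the $0$- and $2$-handles is a boundary connected sum of $D^3$-bundles over $S^2$ --- one per $2$-handle, determined by its $\mathbb{Z}/2$-framing --- with $\partial W_2$ a connected sum of copies of the trivial and the nontrivial $S^2$-bundle over $S^2$. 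The $3$-handles are then attached along a framed link of $2$-spheres in $\partial W_2$, and the requirement that a single $5$-handle cap off the result forces the $3$-surgeries to produce $S^4$. One checks that $H_2(X)$, the linking form on its torsion subgroup and $w_2(X)$ are all determined by the chain-level boundary map $\partial\colon C_3\to C_2$ together with the collection of $\mathbb{Z}/2$-framings; conversely, Barden shows that every admissible package of such data occurs and, after normalisation by slides, exhibits $X$ as a connected sum drawn from a short list of model $5$-manifolds (among them the twisted bundle $S^2\tilde{\times}S^3$ and the Wu manifold $SU(3)/SO(3)$).

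Next, given $\theta$, I would apply this to both $X$ and $Y$: stabilising with cancelling $2/3$-handle pairs and sliding handles, I would arrange that the two decompositions have equally many $2$- and $3$-handles and that, under $\theta$, the boundary maps $\partial_X$ and $\partial_Y$ are identified at the level of $C_2$. The hypothesis that $\theta$ preserves the torsion linking form is exactly what lets one match the torsion part of this data, and $w_2(Y)\circ\theta=w_2(X)$ is exactly what lets one match the $\mathbb{Z}/2$-framings --- equivalently the spin type, which is carried by the twisted summands. Any residual automorphism of the free part of $H_2$ is realised by a self-diffeomorphism of the connected sum of $S^2\times S^3$-factors: handle slides realise the elementary matrices, transpositions of summands the permutations, and a reflection of a factor a sign change, while the torsion summands are handled by the rigidity of the corresponding model pieces. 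The result is an orientation-preserving diffeomorphism between the two $5$-dimensional handlebodies obtained from $X$ and $Y$ by discarding the top $5$-handle, inducing $\theta$, and restricting to an orientation-preserving diffeomorphism of their boundary $4$-spheres; this extends over the top $D^5$ (possible since $\Theta_5=0$), yielding the desired $f\colon X\to Y$ with $f_*=\theta$. Alternatively, one assembles the trace cobordism between $X$ and $Y$ and applies the $h$-cobordism theorem in dimension six.

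The main obstacle is the matching carried out in the second step: one must show that the torsion linking form and $w_2$ form a \emph{complete} set of obstructions to bringing the two handle decompositions into a common normal form compatibly with $\theta$. This forces one to control the attaching $2$-spheres of the $3$-handles, which sit in codimension two in the $4$-dimensional boundary $\partial W_2$ and are therefore a priori knotted, by exploiting the extra room of the ambient $5$-manifold and slides over the $2$-handles; and it brings in the algebraic classification of nondegenerate skew-symmetric linking forms on finite abelian groups --- hyperbolic summands for odd torsion, together with one exceptional $\mathbb{Z}/2$-summand, realised by the Wu manifold, accounting for the interaction with $w_2$. This handle-and-linking-form bookkeeping, together with the Whitney-trick / $h$-cobordism input needed to promote homological matching to genuine diffeomorphism, is the technical heart of Barden's argument.
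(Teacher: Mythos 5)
The paper does not prove this statement: it is quoted as Barden's classification theorem (Theorem~2.2 of \cite{Bar}) and used as a black box, so there is no internal proof to compare yours against. Your outline does correctly reproduce the architecture of Barden's surgery-theoretic argument --- kill the $1$- and $4$-handles using simple connectivity, normalise the $2$-handles (with framings in $\pi_1(SO(3))\cong\mathbb{Z}/2$ accounting for $w_2$), read off $H_2$, the linking form and $w_2$ from the resulting handle data, realise $\theta$ by handle moves, and cap off with the top cell (or, equivalently, build an $h$-cobordism and apply the six-dimensional $h$-cobordism theorem).

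As a proof, however, it has a genuine gap, which you yourself flag in your final paragraph: every step that carries the actual content of the theorem is asserted rather than established. Concretely: (i) the claim that $(H_2,b,w_2)$ is a \emph{complete} invariant of the normal form --- that two packages of handle data inducing isomorphic triples can be brought to a common normal form compatibly with $\theta$ --- is precisely the theorem, and is not argued; (ii) the attaching $2$-spheres of the $3$-handles sit in codimension two in the $4$-dimensional level $\partial W_2$, where the Whitney trick is unavailable, so both the unknotting/unlinking normalisations and the matching of $\partial_X$ with $\partial_Y$ require a genuine geometric argument, which is exactly where Barden's explicit isotopies and unknotting results do the work; (iii) realising an arbitrary linking-form-preserving automorphism of $\mathrm{Tor}\,H_2(X)$ by a diffeomorphism is dismissed as ``rigidity of the model pieces,'' but it needs both the algebraic classification of skew linking forms and a realisation statement for their isometries; and (iv) the phrase ``Barden shows that every admissible package of such data occurs'' imports the theorem being proved. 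So what you have is an accurate roadmap of the known proof rather than a proof; for the purposes of this paper the correct move is the one the author makes, namely to cite \cite{Bar}.
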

Since the linking number and the second Stiefel-Whitney class are homotopy invariants, it follows in particular that simply-connected closed 5-manifolds which are homotopy equivalent are already diffeomorphic.

It is possible to give a complete list of building blocks of simply-connected 5-manifolds such that each simply-connected 5-manifold is a connected sum of some of those building blocks. In the following we are particularly interested in simply-connected 5-manifolds $X$ whose integral homology is torsion free. By Poincar\'e duality and the universal coefficient theorem the whole integral homology is torsion free if and only if the second homology $H_2(X)$ is torsion free. Simply-connected 5-manifolds satisfying this condition have a simple structure, because they can be constructed using only two building blocks, which can be described in the following way.

There exist up to isomorphism precisely two oriented $S^3$-bundles over $S^2$ -- the trivial bundle $S^2\times S^3$ and a non-trivial bundle, denoted by $S^2\tilde{\times}S^3$. The manifold $S^2\tilde{\times}S^3$ can be constructed as follows: Let $B=S^2\tilde{\times}D^3$ denote the non-trivial $D^3$-bundle over $S^2$. Then the boundary $\partial B$ is the non-trivial $S^2$-bundle over $S^2$, hence diffeomorphic to $\mathbb{CP}^2\#{\overline{\mathbb{CP}}{}^{2}}$. Consider a second copy $B^*$ of $B$ with the opposite orientation. Then the oriented 5-manifold $S^2\tilde{\times}S^3$ is obtained by gluing together $B$ and $B^*$ along their boundaries via the identity. In particular, the manifold  $S^2\tilde{\times}S^3$ is non-spin, because a spin structure would induce a spin structure on $B$ and hence on $\partial B$, which is non-spin.  

It follows from the list of building blocks in Barden's article \cite{Bar} that $S^2\times S^3$ and $S^2\tilde{\times}S^3$ are the only building blocks with torsion free second integral homology. Hence every simply-connected 5-manifold with torsion free homology decomposes as a connected sum of several copies of these two manifolds. Moreover, one can show with Theorem \ref{Barden} that there exists a diffeomorphism
\begin{equation*}
S^2\tilde{\times}S^3\# S^2\tilde{\times}S^3 \cong S^2\times S^3\# S^2\tilde{\times}S^3,
\end{equation*}
hence in every non-spin connected sum one $S^2\tilde{\times}S^3$ summand suffices. This implies:
\begin{prop}\label{5-mfd tor free H2}
Let $X$ be a simply-connected closed oriented 5-manifold with torsion free homology. Then $X$ is diffeomorphic to
\begin{enumerate}
\item $\#b_2(X)S^2\times S^3$ if $X$ is spin
\item $\#(b_2(X)-1)S^2\times S^3\#S^2\tilde{\times}S^3$ if $X$ is non-spin.
\end{enumerate}
\end{prop}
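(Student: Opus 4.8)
The plan is to feed in the two facts just recorded — that $X$ decomposes as a connected sum of copies of $S^2\times S^3$ and $S^2\tilde{\times}S^3$, and that a single $S^2\tilde{\times}S^3$-summand absorbs any further such summands — and then read off the statement by counting. So I would start by writing $X\cong \#^p(S^2\times S^3)\,\#^q(S^2\tilde{\times}S^3)$ for some integers $p,q\geq 0$. For $5$-manifolds a Mayer--Vietoris argument gives $H_2(A\# B)\cong H_2(A)\oplus H_2(B)$, and since $H_2(S^2\times S^3)\cong H_2(S^2\tilde{\times}S^3)\cong\mathbb{Z}$, this yields $b_2(X)=p+q$.

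Next I would express the spin type through $q$. The same Mayer--Vietoris computation shows $H^2(A\# B;\mathbb{Z}_2)\cong H^2(A;\mathbb{Z}_2)\oplus H^2(B;\mathbb{Z}_2)$ compatibly with the two inclusions, and under this splitting $w_2$ of the connected sum is the direct sum of the $w_2$'s of the summands; hence $X$ is spin if and only if each summand is spin. Now $S^2\times S^3$ is spin — by the Whitney formula $w(S^2\times S^3)=w(S^2)\,w(S^3)=1$, since $w_2(S^2)=0$ and $S^3$ is parallelizable — while $S^2\tilde{\times}S^3$ is non-spin by the observation made above. Therefore $X$ is spin exactly when $q=0$.

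Then I would split into the two cases. If $X$ is spin, $q=0$ forces $p=b_2(X)$ and $X\cong\#^{b_2(X)}(S^2\times S^3)$, which is (a). If $X$ is non-spin, $q\geq 1$, and applying the diffeomorphism $S^2\tilde{\times}S^3\,\#\,S^2\tilde{\times}S^3\cong S^2\times S^3\,\#\,S^2\tilde{\times}S^3$ exactly $q-1$ times — each application drops the number of $\tilde{\times}$-summands by one while leaving the total number of summands unchanged — turns $X$ into $\#^{p+q-1}(S^2\times S^3)\,\#\,(S^2\tilde{\times}S^3)$; as $p+q-1=b_2(X)-1$, this is (b).

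I do not expect a genuine obstacle once the two quoted facts are in hand — the argument above is pure bookkeeping. The substantive content sits in those facts, both consequences of Barden's Theorem \ref{Barden}: the enumeration of the building blocks with torsion-free $H_2$, and the absorption diffeomorphism (proved by checking that both sides carry $H_2\cong\mathbb{Z}^2$ with vanishing linking form and $w_2$ a surjection onto $\mathbb{Z}_2$, and that any two such surjections of $\mathbb{Z}^2$ are intertwined by an automorphism). If one instead wanted a self-contained proof, one could apply Theorem \ref{Barden} directly against the model manifold: $H_2(X)$ and $H_2$ of the model are both free of rank $b_2(X)$ with trivial linking form (by torsion-freeness of $X$), so only an isomorphism $\theta\colon H_2(X)\to H_2(\mathrm{model})$ with $w_2(\mathrm{model})\circ\theta=w_2(X)$ is needed — in the spin case any $\theta$ works, and in the non-spin case both Stiefel--Whitney classes are surjections $\mathbb{Z}^{b_2(X)}\to\mathbb{Z}_2$ and a change of basis matches them; verifying that change of basis is the one point requiring care.
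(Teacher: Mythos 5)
Your argument is correct and follows the same route as the paper: decompose $X$ into the two torsion-free building blocks furnished by Barden's list, use the absorption diffeomorphism $S^2\tilde{\times}S^3\#S^2\tilde{\times}S^3\cong S^2\times S^3\#S^2\tilde{\times}S^3$ to reduce to at most one non-spin summand, and count. The paper states the conclusion as an immediate consequence of these two facts; your write-up simply makes the bookkeeping (Mayer--Vietoris for $b_2$ and for $w_2$ of a connected sum) explicit.
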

Here we denote by $\#rS^2\times S^3$ the connected sum
\begin{equation*}
S^2\times S^3\#S^2\times S^3\#\ldots\#S^2\times S^3
\end{equation*}
of $r$ copies of $S^2\times S^3$. The empty sum in (a) for $b_2(X)=0$ is the 5-sphere $S^5$.

\section{Contact structures on simply-connected 5-manifolds}\label{sect contact structures}

Let $X^{2n+1}$ denote a connected oriented manifold of odd dimension. By definition, an {\em almost contact structure} on $X$ is a rank $2n$-distribution $\xi\subset TX$ together with a symplectic structure $\sigma$ on the vector bundle $\xi\rightarrow X$. A (co-orientable) {\em contact structure} is an almost contact structure such that the symplectic form $\sigma$ on $\xi$ is of the form $(d\alpha)|_\xi$, where $\alpha$ is a nowhere vanishing 1-form on $X$ that defines $\xi$ in the sense that the kernel distribution $\text{ker}\,\alpha$ equals $\xi$. Note that there is a slightly more general version of contact structures which are not co-orientable so that the defining 1-form and hence the symplectic structure on the distribution exists only locally on $X$. In the following we only consider co-orientable contact structures.

If $(\xi,\sigma)$ is an almost contact structure we can choose a complex structure on $\xi$ compatible with the symplectic form $\sigma$ and hence define Chern classes $c_k(\xi)\in H^{2k}(X)$. These classes do not depend on the choice of compatible complex structure, because the space of complex structures compatible with a given symplectic form is contractible. However, they depend on the choice of symplectic structure. For a contact structure we can choose complex structures compatible with the symplectic form $(d\alpha)|_\xi$ for a defining 1-form $\alpha$. Since any two defining 1-forms only differ by multiplication with a nowhere zero function on $X$, it follows that the Chern classes $c_k(\xi)$ of a contact structure depend only on the contact distribution $\xi$, not on the choice of contact form $\alpha$. 

The first Chern class of an almost contact structure $\xi$ is related to the second Stiefel-Whitney class of the manifold $X$ in the following way: 
\begin{lem}\label{Contact spin} Let $\xi$ be an almost contact structure on $X$. Then $c_1(\xi)\equiv w_2(X) \bmod 2$. 
\end{lem}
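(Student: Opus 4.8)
The plan is to exhibit the contact distribution as the underlying real bundle of a complex vector bundle sitting inside $TX$ with a trivial complement, and then to invoke the standard relation between the Stiefel--Whitney classes and the Chern classes of a complex bundle.

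First I would observe that the real line bundle $TX/\xi$ is trivial. The symplectic form $\sigma$ orients $\xi$, and $X$ is oriented, so the quotient $TX/\xi$ inherits an orientation; an orientable real line bundle is trivial. Choosing a splitting (for instance via an auxiliary Riemannian metric on $X$) therefore gives an isomorphism of real vector bundles $TX\cong\xi\oplus\varepsilon^1$, where $\varepsilon^1$ denotes the trivial line bundle. The Whitney sum formula for total Stiefel--Whitney classes then yields $w(TX)=w(\xi)\,w(\varepsilon^1)=w(\xi)$; in particular $w_2(X)=w_2(\xi)$.

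Next I would use the complex structure. By definition of an almost contact structure we may choose a complex structure on $\xi$ compatible with $\sigma$, which makes $\xi$ a complex vector bundle of rank $n$ whose underlying real bundle is the original $\xi$ and whose Chern classes are the $c_k(\xi)$ (independent of the choice of compatible complex structure, as recalled above). For any complex vector bundle $E$, the total Stiefel--Whitney class of its underlying real bundle is the mod $2$ reduction of its total Chern class; in particular $w_2(E)\equiv c_1(E)\pmod 2$. Applying this to $E=\xi$ and combining with $w_2(X)=w_2(\xi)$ gives $c_1(\xi)\equiv w_2(X)\pmod 2$, as claimed.

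The only point that requires any care is the triviality of $TX/\xi$, which is however immediate from orientability; everything else is a direct application of standard characteristic-class identities, so I do not anticipate a genuine obstacle.
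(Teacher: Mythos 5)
Your proposal is correct and follows essentially the same route as the paper: split $TX\cong\xi\oplus\varepsilon^1$, apply the Whitney sum formula to get $w_2(X)=w_2(\xi)$, and use $w_2(\xi)\equiv c_1(\xi)\pmod 2$ for the complex bundle $(\xi,\sigma)$. The only difference is that you spell out why the complementary line bundle is trivial, which the paper takes for granted.
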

\begin{proof} By the Whitney sum formula for $TX=\xi\oplus\mathbb{R}$,
\begin{equation*}
w_2(X)=w_2(\xi)\cup w_0(\mathbb{R})=w_2(\xi).
\end{equation*} 
Since $\xi\rightarrow X$ is a complex vector bundle, with complex structure compatible with $\sigma$, we have $w_2(\xi)\equiv c_1(\xi) \bmod 2$. This implies the claim.
\qed
\end{proof}

Suppose that $\xi_t$ for $t\in [0,1]$ is a smooth family of contact structures on a closed manifold $X$. We can choose a smooth family of $1$-forms $\alpha_t$ defining $\xi_t$. Using the Moser technique one can prove that there exists a smooth family $\psi_t$ of orientation preserving self-diffeomorphisms of $X$ with $\psi_0=\text{Id}_X$ such that $\psi^*\alpha_t=f_t\alpha_0$, for smooth functions $f_t$ on $X$ \cite{MS2}. This implies the following theorem of J.~W.~Gray \cite{Gr}. 
\begin{thm} Let $\xi_t$, with $t\in [0,1]$, be a smooth family of contact structures on a closed manifold $X$. Then there exists an isotopy $\psi_t$, $t\in [0,1]$, of orientation preserving self-diffeomorphisms of $X$ such that $\psi_t^*\xi_t=\xi_0$.  
\end{thm}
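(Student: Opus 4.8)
The plan is to prove Gray stability via the Moser deformation argument, exactly as hinted at in the paragraph preceding the statement. First I would fix a smooth family of contact forms $\alpha_t$ with $\ker\alpha_t=\xi_t$; such a family exists because $X$ is closed (so each $\xi_t$ is coorientable after passing to a double cover, or one works locally and patches, but on a simply-connected 5-manifold coorientability is automatic, and in any case the statement only needs a local family near each $t$). I then seek an isotopy $\psi_t$ with $\psi_0=\mathrm{Id}_X$ solving $\psi_t^*\alpha_t=\lambda_t\,\alpha_0$ for some family of positive functions $\lambda_t$; pulling back kernels then immediately gives $\psi_t^*\xi_t=\xi_0$, which is the assertion.

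The core step is to differentiate the equation $\psi_t^*\alpha_t=\lambda_t\alpha_0$ in $t$ and convert it into an ODE for the generating vector field $Y_t$ of $\psi_t$ (i.e.\ $\tfrac{d}{dt}\psi_t=Y_t\circ\psi_t$). Using the Cartan formula $\mathcal L_{Y_t}\alpha_t = d\iota_{Y_t}\alpha_t + \iota_{Y_t}d\alpha_t$, and writing $\mu_t=\tfrac{d}{dt}(\log\lambda_t)\circ\psi_t^{-1}$, one is led to require
\begin{equation*}
\dot\alpha_t + d\iota_{Y_t}\alpha_t + \iota_{Y_t}d\alpha_t = \mu_t\,\alpha_t .
\end{equation*}
The standard trick is to look for $Y_t$ lying in the contact distribution $\ker\alpha_t$, so that $\iota_{Y_t}\alpha_t=0$ and the equation reduces to $\dot\alpha_t + \iota_{Y_t}d\alpha_t = \mu_t\,\alpha_t$. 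Evaluating this identity on the Reeb vector field $R_t$ of $\alpha_t$ (characterised by $\iota_{R_t}\alpha_t=1$, $\iota_{R_t}d\alpha_t=0$) pins down $\mu_t = \dot\alpha_t(R_t)$. With $\mu_t$ thus determined, the remaining equation $\iota_{Y_t}d\alpha_t = \mu_t\alpha_t - \dot\alpha_t$ must be solved for $Y_t\in\ker\alpha_t$: here one uses that $d\alpha_t$ restricts to a nondegenerate (symplectic) form on $\xi_t=\ker\alpha_t$, while the right-hand side $\mu_t\alpha_t-\dot\alpha_t$ annihilates $R_t$ (by the choice of $\mu_t$) and hence can be viewed as a $1$-form on $\xi_t$; nondegeneracy then yields a unique $Y_t\in\xi_t$. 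Smoothness of $Y_t$ in all variables is clear from the construction.

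Finally I would integrate the time-dependent vector field $Y_t$ to obtain the flow $\psi_t$; since $X$ is closed the flow exists for all $t\in[0,1]$ and $\psi_0=\mathrm{Id}_X$. One then checks, by construction and the uniqueness of solutions to the linear ODE $\tfrac{d}{dt}(\psi_t^*\alpha_t) = \psi_t^*(\mathcal L_{Y_t}\alpha_t+\dot\alpha_t) = (\mu_t\circ\psi_t)\,\psi_t^*\alpha_t$, that $\psi_t^*\alpha_t=\lambda_t\alpha_0$ with $\lambda_t=\exp\!\big(\int_0^t(\mu_s\circ\psi_s)\,ds\big)>0$, and hence $\psi_t^*\xi_t=\xi_0$.

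The main obstacle, such as it is, is the algebraic solvability step: one must verify that the choice $\mu_t=\dot\alpha_t(R_t)$ is exactly what makes the $1$-form $\mu_t\alpha_t-\dot\alpha_t$ vanish on $R_t$, so that it lies in the image of the isomorphism $Y\mapsto\iota_Y d\alpha_t|_{\xi_t}$ from $\xi_t$ to $\xi_t^*$. Everything else — existence of the family $\alpha_t$, compactness giving global flow, and the bookkeeping with Cartan's formula — is routine; the only genuinely contact-geometric input is the nondegeneracy of $d\alpha_t$ on $\xi_t$, which is the defining property of a contact form and is what allows $Y_t$ to be found.
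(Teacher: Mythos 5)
Your proposal is correct and is precisely the Moser-technique argument the paper itself invokes (it only sketches it, deferring to McDuff--Salamon, by asserting the existence of $\psi_t$ with $\psi_t^*\alpha_t=f_t\alpha_0$); you have simply filled in the standard details. One small remark: with the paper's definition a contact structure is already given by a global defining $1$-form, so your hedging about coorientability is unnecessary --- the family $\alpha_t$ can be chosen smoothly in $t$ directly.
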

Because of this theorem we call contact structures $\xi, \xi'$ which can be deformed into each other by a smooth family of contact structures {\em isotopic}. We call almost contact structures {\em homotopic} if they can be connected by a smooth family of almost contact structures. The contact structures in an isotopy class or the almost contact structures in a homotopy class all have the same Chern classes. We can also consider (almost) contact structures $\xi,\xi'$ which are permuted by an orientation-preserving self-diffeomorphism $\psi$ of $X$, in the sense that $\psi^*\xi'=\xi$. 

\begin{defn}\label{defn inequiv almost cont} We call almost contact structures and contact structures on an oriented manifold $X$ {\em equivalent} if they can be made identical by a combination of deformations (homotopies and isotopies, respectively) and orientation preserving self-diffeomorphisms of $X$.
\end{defn}

The existence question for {\em almost contact structures} on 5-manifolds was settled by the following theorem of Gray \cite{Gr}.
\begin{thm} Let $X$ be a closed orientable 5-manifold. Then $X$ admits an almost contact structure if and only if $W_3(X)=0$.
\end{thm}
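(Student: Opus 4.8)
The plan is to translate the existence of an almost contact structure into a lifting problem for the classifying map of the tangent bundle, and then to run obstruction theory, the point being that on a $5$-manifold everything beyond the primary obstruction is forced to vanish. First I would reformulate: an almost contact structure $(\xi,\sigma)$ on $X^{5}$ amounts to a splitting $TX=\xi\oplus\mathbb{R}$ (the line complement $\ell$ to the hyperplane field $\xi$ has $w_{1}(\ell)=w_{1}(TX)-w_{1}(\xi)=0$ since both summands are orientable, so $\ell$ is trivial) together with a complex structure on the rank-$4$ bundle $\xi$. Thus an almost contact structure is exactly a reduction of the structure group of $TX$ from $SO(5)$ to $U(2)$, where $U(2)\hookrightarrow SO(4)\hookrightarrow SO(5)$ sits block-diagonally fixing the last coordinate. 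Equivalently, $X$ admits an almost contact structure if and only if the classifying map $\tau\colon X\to BSO(5)$ of $TX$ lifts through the fibration $BU(2)\to BSO(5)$, whose homotopy fibre is the homogeneous space $SO(5)/U(2)$.

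Next I would identify the fibre. Combining $SO(5)/SO(4)=S^{4}$ with $SO(4)/U(2)=S^{2}$ gives a fibration $S^{2}\to SO(5)/U(2)\to S^{4}$ exhibiting $SO(5)/U(2)$ as the twistor space of $S^{4}$, namely $SO(5)/U(2)\cong\mathbb{CP}^{3}$. Since $\mathbb{CP}^{3}=S^{7}/S^{1}$, its homotopy groups are $\pi_{1}=0$, $\pi_{2}=\mathbb{Z}$ and $\pi_{k}=\pi_{k}(S^{7})=0$ for $3\le k\le 6$. Building a section of the pulled-back fibration skeleton by skeleton, one finds: the fibre is simply connected, so a section over the $2$-skeleton exists; the primary obstruction to extending over the $3$-skeleton lies in $H^{3}(X;\pi_{2})=H^{3}(X;\mathbb{Z})$; and the obstructions to extending over the $4$- and $5$-skeleta lie in $H^{4}(X;\pi_{3})=0$ and $H^{5}(X;\pi_{4})=0$. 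As $X$ has the homotopy type of a $5$-dimensional CW complex, a lift exists if and only if that one primary obstruction $o(\tau)\in H^{3}(X;\mathbb{Z})$ vanishes. Being natural in $X$, $o(\tau)$ is the pullback of the universal class in $H^{3}(BSO(5);\mathbb{Z})\cong\mathbb{Z}/2$, which is generated by $W_{3}=\beta w_{2}$ for the integral Bockstein $\beta$; this universal class is non-zero (for instance the Wu manifold $SU(3)/SO(3)$ is a closed oriented $5$-manifold with $W_{3}\neq 0$, which by the converse direction below admits no almost contact structure), so $o(\tau)=W_{3}(X)$. This gives the ``if'' direction. The ``only if'' direction I would read off directly from Lemma~\ref{Contact spin}: if $\xi$ is almost contact then $w_{2}(X)\equiv c_{1}(\xi)\bmod 2$ is the mod-$2$ reduction of an integral class, so $W_{3}(X)=\beta(w_{2}(X))=0$ by exactness of the Bockstein sequence.

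The hard part is the bookkeeping that makes all higher obstructions disappear, i.e.\ the computation $\pi_{3}(SO(5)/U(2))=\pi_{4}(SO(5)/U(2))=0$. This is precisely why one should reduce directly to $U(2)\subset SO(5)$ with fibre $\mathbb{CP}^{3}$ rather than first split off a trivial line field (which is legitimate since $\chi(X)=0$ for odd-dimensional closed $X$) and then reduce $SO(4)$ to $U(2)$: the latter problem has fibre $S^{2}$, for which $\pi_{3}(S^{2})=\mathbb{Z}$ and $\pi_{4}(S^{2})=\mathbb{Z}/2$ are non-zero, so one would additionally have to analyse secondary obstructions in $H^{4}(X;\mathbb{Z})$ and $H^{5}(X;\mathbb{Z}/2)$ and argue they vanish. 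A more routine point to pin down is the identification of the universal primary obstruction with $W_{3}$; one may either use an explicit non-vanishing witness as above, or simply invoke that this is the classical primary obstruction to an almost complex (hence almost contact) structure.
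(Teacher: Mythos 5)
Your argument is correct. Note that the paper does not prove this statement at all --- it is quoted as a theorem of Gray with a citation to \cite{Gr} --- so there is nothing internal to compare against; what you have written is the standard obstruction-theoretic proof, and it is sound. The key points all check out: an almost contact structure is the same as a reduction of the structure group of $TX$ to $U(2)\subset SO(4)\subset SO(5)$ (the normal line to $\xi$ is trivial by orientability of $X$ and of the symplectic bundle $\xi$, and a symplectic structure on $\xi$ can be traded for a compatible complex structure since the space of such pairs is contractible); the fibre of $BU(2)\to BSO(5)$ is $SO(5)/U(2)\cong\mathbb{CP}^3$, whose homotopy groups $\pi_3=\dots=\pi_6=0$ kill every obstruction beyond the primary one in $H^3(X;\pi_2)=H^3(X;\mathbb{Z})$; and since $BSO(5)$ is simply connected there is no local-coefficient issue. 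Your identification of the primary obstruction with $W_3$ is the only step needing care, and your treatment is adequate and non-circular: $H^3(BSO(5);\mathbb{Z})\cong\mathbb{Z}/2$ is generated by $W_3=\beta w_2$, so the universal obstruction is $0$ or $W_3$, and the Wu manifold $SU(3)/SO(3)$ (which has $H^2(\,\cdot\,;\mathbb{Z})=0$ and $w_2\neq 0$, hence $W_3\neq 0$) admits no almost contact structure by the independently established ``only if'' direction via Lemma~\ref{Contact spin}, forcing the universal class to be $W_3$. Your closing remark about why one must reduce directly to $U(2)\subset SO(5)$ rather than first splitting off a line field --- avoiding the nonvanishing $\pi_3(S^2)$ and $\pi_4(S^2)$ --- correctly identifies where a more naive approach would get stuck.
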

Here $W_3(X)\in H^3(X)$ is the third integral Stiefel-Whitney class, defined as the image of $w_2(X)$ under the Bockstein homomorphism.

The existence of {\em contact structures} on simply-connected 5-manifolds was proved by H.~Geiges \cite{Ge}. He also proved a classification theorem for almost contact structures on simply-connected 5-manifolds up to homotopy: 

\begin{thm}\label{Theorem geiges} Let $X$ be a simply-connected closed 5-manifold. 
\begin{enumerate}
\item Every class in $H^2(X)$ that reduces mod $2$ to $w_2(X)$ arises as the first Chern class of an almost contact structure. Two almost contact structures $\xi_0,\xi_1$ are homotopic if and only if $c_1(\xi_0)=c_1(\xi_1)$.
\item Every homotopy class of almost contact structures admits a contact structure.
\end{enumerate} 
\end{thm}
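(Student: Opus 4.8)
The plan is to prove (a) by obstruction theory and (b) by explicit contact constructions, the latter being where the real work lies.

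For (a): after fixing a Riemannian metric and an orientation on $X$, an almost contact structure is exactly a reduction of the structure group of $TX$ from $SO(5)$ to $U(2)$ (with $U(2)\subset SO(4)\subset SO(5)$), hence a section of a fibre bundle $E\to X$ with fibre $F=SO(5)/U(2)$. I would first identify $F\cong\mathbb{CP}^3$ — e.g.\ from the fibration $SO(4)/U(2)=S^2\hookrightarrow SO(5)/U(2)\to SO(5)/SO(4)=S^4$ this is the twistor space of $S^4$ — and read off from the Hopf-type fibration $S^1\to S^7\to\mathbb{CP}^3$ that $\pi_2(F)=\mathbb{Z}$ while $\pi_i(F)=0$ for $i=0,1$ and for $3\le i\le 5$. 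Since $\pi_1(X)=1$, all the coefficient systems below are untwisted.

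This turns (a) into primary obstruction theory in a vanishing range. A section exists over the $2$-skeleton ($\pi_0F=\pi_1F=0$); the single obstruction to extending it over all of $X$ lies in $H^3(X;\pi_2F)=H^3(X;\mathbb{Z})$ (nothing further, since $\pi_3F=\pi_4F=0$), and it is classically identified with $W_3(X)=\beta(w_2(X))$, the primary obstruction being a natural class in $H^3(BSO(5);\mathbb{Z})=\mathbb{Z}_2\langle W_3\rangle$ which is the non-trivial one. Thus if some $c\in H^2(X;\mathbb{Z})$ reduces to $w_2(X)$ then $W_3(X)=\beta(\rho_2 c)=0$ and almost contact structures exist; and, a section existing, the set of homotopy classes of sections is a torsor over $H^2(X;\pi_2F)=H^2(X;\mathbb{Z})$ via the primary difference class of two sections that agree on the $1$-skeleton. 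The one genuine computation is to match this difference class with $c_1$: the universal $\mathbb{C}^2$-bundle over the fibre $\mathbb{CP}^3=SO(5)/U(2)$ has first Chern class twice the generator of $H^2(\mathbb{CP}^3;\mathbb{Z})$ — equivalently $\pi_2(SO(5)/U(2))$ is the index-two subgroup $2\mathbb{Z}\subset\pi_1(U(2))=\mathbb{Z}$, so a generating clutching loop has determinant of degree $2$ — hence changing a section by $a\in H^2(X;\mathbb{Z})$ changes $c_1(\xi)$ by $2a$. Because $H^2(X;\mathbb{Z})\cong\operatorname{Hom}(H_2(X),\mathbb{Z})$ is torsion free (here simple connectivity enters), multiplication by $2$ is injective, so $c_1$ separates homotopy classes. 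Finally $\operatorname{im}(c_1)$ is a coset of $2H^2(X;\mathbb{Z})$ which by Lemma \ref{Contact spin} sits inside $\{c:\rho_2 c=w_2(X)\}$, itself a coset of $\ker\rho_2=2H^2(X;\mathbb{Z})$; two nested cosets of the same subgroup coincide, so every admissible value of $c_1$ is realised. This proves (a).

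For (b) obstruction theory gives nothing and one must exhibit honest contact structures; I expect this to be the crux. Using Barden's diffeomorphism classification (Proposition \ref{5-mfd tor free H2} being its torsion-free part) I would reduce to two tasks: equipping the building blocks with contact structures of computable first Chern class — $S^5$ with its standard structure, and each of $S^2\times S^3$, $S^2\tilde{\times}S^3$ and the torsion blocks via suitable Boothby–Wang circle bundles, Brieskorn manifolds, or contact surgery on $S^5$ — and assembling them by contact connected sum. To hit \emph{every} admissible value of $c_1$ on a fixed $X$, rather than just one, I would then invoke a contact-geometric operation that changes the first Chern class by a prescribed even class while leaving the diffeomorphism type of $X$ unchanged, for instance a generalized Lutz twist along an embedded isotropic circle with trivial normal bundle (or contact surgery along such a circle with altered framing); combined with the torsor description in (a), this represents every homotopy class of almost contact structures by a contact structure. (The later Borman–Eliashberg–Murphy $h$-principle would yield (b) in one line, but Geiges's constructive route is the one appropriate here.)
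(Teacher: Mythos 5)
A preliminary remark on the comparison you were asked for: the paper does not prove this theorem at all --- it is quoted from Geiges \cite{Ge} (with \cite{Koe1,Koe3} cited for an alternative route to the existence part), so there is no in-paper argument to measure yours against. Judged on its own, your part (a) is correct and is essentially the standard obstruction-theoretic argument: identify almost contact structures with sections of the bundle with fibre $SO(5)/U(2)\cong\mathbb{C}P^3$, use $\pi_1=\pi_3=\pi_4=\pi_5=0$ and $\pi_2\cong\mathbb{Z}$ to reduce everything to the primary obstruction (classically $W_3(X)$) and the primary difference class in $H^2(X;\mathbb{Z})$, and compute that the difference class shifts $c_1$ by twice itself because $\pi_2(SO(5)/U(2))\hookrightarrow\pi_1(U(2))$ has image $2\mathbb{Z}$. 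The closing coset argument, using torsion-freeness of $H^2(X;\mathbb{Z})$ for injectivity of $c_1$ and the equality of the two nested cosets of $2H^2(X;\mathbb{Z})$ for surjectivity onto the integral lifts of $w_2(X)$, is clean and complete.

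Part (b) is where the substance of the theorem lies, and there your proposal has a genuine gap. The strategy --- Barden's classification, contact structures on building blocks, contact connected sums --- is the right one, but the step that is supposed to produce \emph{every} admissible value of $c_1$ is not carried out. The operation you invoke, a ``generalized Lutz twist along an embedded isotropic circle'' that ``changes the first Chern class by a prescribed even class,'' is not an established construction with that property: five-dimensional Lutz-type modifications along transverse or isotropic curves do not come with this kind of control on $c_1$, and you give no argument that such an operation exists. What is actually needed is the direct construction, on each building block, of contact structures realizing every admissible first Chern class --- for instance all even integers on $S^2\times S^3$ and all odd integers on $S^2\tilde{\times}S^3$, obtainable as Boothby--Wang bundles over $S^2\times S^2$ and $\mathbb{C}P^2\#\overline{\mathbb{C}P^2}$ with suitably chosen integral symplectic forms, whose $c_1$ one computes as in Lemma \ref{Chern BW X} --- combined with the additivity of $c_1$ under contact connected sum (which itself deserves a sentence: the Weinstein model only alters the structures in balls, and $H^2$ of a connected sum of $5$-manifolds is the direct sum). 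The building blocks with torsion in $H_2$ also cannot be waved at, since the theorem is stated for arbitrary simply-connected $X$ and Proposition \ref{5-mfd tor free H2} covers only the torsion-free case. Until these constructions are exhibited, (b) remains an assertion rather than a proof.
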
 
A different proof for the existence of contact structures on simply-connected 5-manifolds can be found in \cite{Koe1,Koe3}. The fact that two almost contact structures are homotopic if they have the same first Chern class holds more generally for closed oriented 5-manifolds without 2-torsion in $H^2(X)$. For a proof see \cite[Theorem 8.18]{MHthesis}.

We want to prove the following theorem, which is a consequence of Barden's classification theorem.
\begin{thm}\label{thm on action of diff on 5-mfds}
Suppose that $X$ is a simply-connected closed oriented 5-manifold. Let $c,c'\in H^2(X)$ be classes with the same divisibility and whose mod 2 reduction is equal to $w_2(X)$. Then there exists an orientation preserving self-diffeomorphism $\phi\colon X\rightarrow X$ such that $\phi^*c'=c$. 
\end{thm}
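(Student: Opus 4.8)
The plan is to deduce everything from Barden's classification theorem (Theorem~\ref{Barden}) applied with $Y=X$. Concretely, it suffices to construct an automorphism $\theta\colon H_2(X)\to H_2(X)$ with three properties: (i) $\theta$ preserves the linking form on $\mathrm{Tor}\,H_2(X)$; (ii) $w_2(X)\circ\theta=w_2(X)$; and (iii) the automorphism of $H^2(X)$ induced by $\theta$ carries $c'$ to $c$. Indeed, if such a $\theta$ exists, then Theorem~\ref{Barden} produces an orientation-preserving diffeomorphism $\phi\colon X\to X$ with $\phi_*=\theta$, and since $X$ is simply connected the Universal Coefficient Theorem identifies $H^2(X)$ with $\mathrm{Hom}(H_2(X),\mathbb{Z})$ via the Kronecker pairing, under which $\phi^*$ becomes precomposition with $\phi_*=\theta$; property (iii) then reads $c'\circ\theta=c$, i.e.\ $\phi^*c'=c$.

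First I would record that (ii) is automatic once (iii) holds. By hypothesis the mod~$2$ reductions of $c$ and $c'$ both equal $w_2(X)$, regarded as homomorphisms $H_2(X)\to\mathbb{Z}_2$; hence $w_2(X)\circ\theta=(c'\bmod 2)\circ\theta=(c'\circ\theta)\bmod 2=c\bmod 2=w_2(X)$ as soon as $c'\circ\theta=c$. So the real content is to arrange (i) and (iii) simultaneously.

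Next I would choose a splitting $H_2(X)=F\oplus T$ with $T=\mathrm{Tor}\,H_2(X)$ and $F\cong\mathbb{Z}^{\,b_2(X)}$ free, which is possible because $F$ is free. Since $c$ and $c'$ are homomorphisms to $\mathbb{Z}$, they vanish on $T$, so they are pulled back from homomorphisms $\bar c,\bar c'\colon F\to\mathbb{Z}$, and the divisibility of $c$ (resp.\ $c'$) in $H^2(X)$ equals the positive generator of the image of $\bar c$ (resp.\ $\bar c'$). As these two images coincide by hypothesis, an elementary argument over $\mathbb{Z}$ --- a primitive homomorphism $\mathbb{Z}^n\to\mathbb{Z}$ is surjective, so its kernel is a direct summand and one can extend a basis of the kernel to a basis of $\mathbb{Z}^n$ in which the homomorphism is a coordinate projection; thus all primitive homomorphisms lie in one $GL_n(\mathbb{Z})$-orbit under precomposition, and one then scales by the common divisibility --- yields an automorphism $\theta_F$ of $F$ with $\bar c'\circ\theta_F=\bar c$ (taking $\theta_F=\mathrm{id}$ if $c=c'=0$). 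Setting $\theta=\theta_F\oplus\mathrm{id}_T$ then gives an automorphism of $H_2(X)$ that acts trivially on $T$, hence preserves the linking form, and satisfies $c'\circ\theta=c$. Feeding $\theta$ into Theorem~\ref{Barden} finishes the proof.

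The only point that requires care is the compatibility of the three conditions appearing in Barden's theorem, and the observation that makes this manageable is exactly the one above: the $w_2$-condition is forced by the hypothesis on mod~$2$ reductions, and the linking-form condition is handled automatically by letting $\theta$ be the identity on the torsion subgroup, which is legitimate precisely because $c$ and $c'$ are insensitive to torsion classes. Beyond that the argument is pure linear algebra over $\mathbb{Z}$ together with an invocation of the classification theorem, so I do not anticipate a serious obstacle.
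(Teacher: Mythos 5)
Your proposal is correct and follows essentially the same route as the paper: split off the torsion subgroup, let $\theta$ be the identity there (so the linking form is preserved and the $w_2$-condition follows from the hypothesis on mod $2$ reductions), reduce to matching two homomorphisms $F\to\mathbb{Z}$ of equal divisibility via a change of basis putting a primitive homomorphism into standard coordinate form, and then invoke Barden's theorem. The basis-extension fact you cite is exactly the paper's Lemma~\ref{lemma div}, so there is nothing substantive to add.
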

Note that $H^2(X)$ is torsion free by the universal coefficient theorem, because $X$ is simply-connected. By {\em divisibility} we mean the maximal (non-negative) divisibility as an element in the free abelian group $H^2(X)$. The divisibility is zero if and only if the class is zero itself. The proof of the theorem uses the following lemma.
\begin{lem} \label{lemma div} Let $G$ be a finitely generated free abelian group of rank $n$. Suppose $\alpha \in \text{Hom}(G,\mathbb{Z})$ is indivisible. Then there exists a basis $e_1,\dotsc,e_n$ of $G$ such that $\alpha(e_1)=1$ and $\alpha(e_i)=0$ for $i>1$. 
\end{lem}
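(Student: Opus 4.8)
The plan is to argue by induction on the rank $n$ of $G$, peeling off a rank-one summand on which $\alpha$ restricts to an isomorphism onto $\mathbb{Z}$.

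\medskip

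First I would settle the base case $n=1$: here $\alpha\colon G\to\mathbb{Z}$ is a homomorphism from $\mathbb{Z}$ to $\mathbb{Z}$, and indivisibility (the generator of $\mathrm{Hom}(G,\mathbb{Z})$, equivalently the statement that $\alpha$ is not a multiple of another homomorphism) forces $\alpha$ to send a generator of $G$ to $\pm 1$; after possibly replacing the generator by its negative we get a basis $e_1$ with $\alpha(e_1)=1$, and there is nothing further to check. For the inductive step, suppose the lemma holds for rank $n-1$. Pick any basis $f_1,\dotsc,f_n$ of $G$ and set $a_i=\alpha(f_i)\in\mathbb{Z}$. Indivisibility of $\alpha$ in $\mathrm{Hom}(G,\mathbb{Z})$ translates into $\gcd(a_1,\dotsc,a_n)=1$: if every $a_i$ were divisible by some $d>1$ then $\tfrac{1}{d}\alpha$ would be a well-defined integral homomorphism on the basis, contradicting indivisibility.

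\medskip

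The heart of the argument is then to produce from $\gcd(a_1,\dotsc,a_n)=1$ an element $e_1\in G$ with $\alpha(e_1)=1$ that can be extended to a basis. By B\'ezout there are integers $x_i$ with $\sum x_i a_i=1$; put $e_1=\sum x_i f_i$, so $\alpha(e_1)=1$. The key point is that $e_1$ is primitive (indivisible) in $G$: if $e_1=k v$ for some $v\in G$ and $k>1$, then $1=\alpha(e_1)=k\,\alpha(v)$, impossible. A primitive vector in a finitely generated free abelian group extends to a basis $e_1,g_2,\dotsc,g_n$ (this is the standard fact that $G/\langle e_1\rangle$ is again free, so the extension $0\to\langle e_1\rangle\to G\to G/\langle e_1\rangle\to 0$ splits). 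Now consider the corank-one subgroup $G'=\ker\alpha$. Since $\alpha$ is surjective with $G/G'\cong\mathbb{Z}$ free, we get a splitting $G=\langle e_1\rangle\oplus G'$ with $G'$ free of rank $n-1$. Here I should note that $\alpha|_{G'}=0$ trivially, so the inductive hypothesis is not literally applied to $G'$ — instead, any basis $e_2,\dotsc,e_n$ of $G'$ already satisfies $\alpha(e_i)=0$, and together with $e_1$ gives the desired basis of $G$. (In fact this shows the induction is unnecessary; one can phrase the whole proof directly via the splitting $G=\langle e_1\rangle\oplus\ker\alpha$.)

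\medskip

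The main obstacle — really the only nontrivial input — is the structural fact that a primitive element of a finitely generated free abelian group is part of a basis, equivalently that a surjection onto $\mathbb{Z}$ splits with free kernel. This is standard (it follows from the fact that subgroups of free abelian groups are free, or from Smith normal form applied to $\alpha$ viewed as a $1\times n$ integer matrix), and once invoked the rest is the B\'ezout computation above. I would present the Smith-normal-form viewpoint as the cleanest: diagonalizing the matrix of $\alpha$ shows directly that there is a basis in which $\alpha$ has the single entry $\gcd(a_1,\dotsc,a_n)=1$ in the first slot and $0$ elsewhere, which is exactly the conclusion.
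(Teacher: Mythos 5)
Your proof is correct and, once the unnecessary induction wrapper and the detour through primitivity of $e_1$ are stripped away, it is essentially the paper's argument: both produce an element $e_1$ with $\alpha(e_1)=1$ (you via B\'ezout on the values $\alpha(f_i)$, the paper by noting the image of $\alpha$ is $m\mathbb{Z}$ with $m=1$ by indivisibility) and then combine it with a basis of $\ker\alpha$ using the splitting $g=\alpha(g)e_1+(g-\alpha(g)e_1)$. The Smith normal form remark is a valid alternative packaging but is not what the paper does.
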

\begin{proof} The kernel of $\alpha$ is a free abelian subgroup of $G$ of rank $n-1$. Let $e_2,\dotsc,e_n$ be a basis of $\text{ker}\,\alpha$. The image of $\alpha$ in $\mathbb{Z}$ is a subgroup, hence of the form $m\mathbb{Z}$. Since $\alpha$ is indivisible we have $m=1$, so there exists an $e_1\in G$ such that $\alpha(e_1)=1$. 
The set $e_1,\dotsc,e_n$ is linearly independent. They also span $G$, because if $g\in G$ is some element then $\alpha(g-\alpha(g)e_1)=0$, hence $g=\alpha(g)e_1 +\sum_{i\geq 2}\lambda_ie_i$.
\qed\end{proof}
We can now prove Theorem \ref{thm on action of diff on 5-mfds}.

\begin{proof} By the universal coefficient theorem we have $H^2(X)\cong \text{Hom}(H_2(X),\mathbb{Z})$, since $X$ is simply-connected. Hence we can view $c,c'$ as homomorphisms on $H_2(X)$ with values in $\mathbb{Z}$. Let $p\colon \mathbb{Z}\rightarrow \mathbb{Z}_2$ denote mod $2$ reduction. The assumption on $c$ and $c'$ is equivalent to
\begin{equation*}
w_2(X)=p\circ c=p\circ c',
\end{equation*}
as homomorphisms on $H_2(X)$ with values in $\mathbb{Z}_2$. Since $c$ and $c'$ have the same divisibility we can write 
\begin{equation*}
c=k\alpha, \quad c'=k\alpha'
\end{equation*}
with $\alpha,\alpha' \in \text{Hom}(H_2(X),\mathbb{Z})$ indivisible. Let $H_2(X)=G\oplus \text{Tor}\,H_2(X)$ with $G$ free abelian. Since $c$ and $c'$ are homomorphisms to $\mathbb{Z}$ they vanish on $\text{Tor}\,H_2(X)$. By Lemma \ref{lemma div} there exist bases $e_1,\dotsc,e_n$ and $e_1',\dotsc,e_n'$ of $G$ such that 
\begin{equation*}
\alpha(e_1)=1=\alpha'(e_1'), \quad \alpha(e_k)=0=\alpha'(e_k') \quad \forall k>1.
\end{equation*}
Let $\theta$ be the group automorphism of $H_2(X)$ given by $\theta(e_k)=e_k'$ for all $k\geq 1$ and which is the identity on $\text{Tor}\,H_2(X)$. Then 
\begin{equation*}
(c'\circ\theta)(e_k)=c'(e_k')=c(e_k) \quad \forall k\geq 1.
\end{equation*}
Hence $c'\circ\theta=c$ on the free abelian subgroup $G$. This equality holds on all of $H_2(X)$ because $c$ and $c'$ vanish on the torsion subgroup. By the assumption above this implies that $w_2(X)\circ\theta =w_2(X)$. Moreover, since $\theta$ is the identity on $\text{Tor}\,H_2(X)$ it preserves the linking form. By Theorem \ref{Barden} the automorphism $\theta$ is induced by an orientation preserving self-diffeomorphism $\phi\colon X\rightarrow X$ such that $\phi_*=\theta$. We have
\begin{equation*}
c(\lambda)=c'(\phi_*\lambda) = (\phi^*c')(\lambda), \quad \text{for all $\lambda \in H_2(X)$}.
\end{equation*}
Hence $\phi^*c'=c$.
\qed\end{proof}

We get the following corollary for almost contact structures. 

\begin{cor}\label{Class 5-mfd almost cont}
Let $X$ be a simply-connected closed oriented 5-manifold. Then two almost contact structures $\xi_0$ and $\xi_1$ on $X$ are equivalent if and only if $c_1(\xi_0)$ and $c_1(\xi_1)$ have the same divisibility in integral cohomology.
\end{cor}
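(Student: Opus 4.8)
The plan is to deduce the corollary by combining Geiges' homotopy classification of almost contact structures (Theorem \ref{Theorem geiges}(a)) with the diffeomorphism-realization statement of Theorem \ref{thm on action of diff on 5-mfds}, using Lemma \ref{Contact spin} to verify the mod $2$ hypothesis.

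For the ``only if'' direction, suppose $\xi_0$ and $\xi_1$ are equivalent. By Definition \ref{defn inequiv almost cont} they are related by a finite sequence of homotopies and orientation-preserving self-diffeomorphisms. A homotopy of almost contact structures leaves the first Chern class unchanged, as noted before Definition \ref{defn inequiv almost cont}. If $\psi$ is an orientation-preserving self-diffeomorphism of $X$ and $\eta$ an almost contact structure, then $\psi^*\eta$ is again an almost contact structure and, by naturality of Chern classes, $c_1(\psi^*\eta)=\psi^*c_1(\eta)$; since $\psi^*$ is an automorphism of the group $H^2(X)$, it preserves the divisibility of any class. Hence the divisibility of the first Chern class is constant along the whole sequence, so $c_1(\xi_0)$ and $c_1(\xi_1)$ have the same divisibility.

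For the ``if'' direction, assume $c_1(\xi_0)$ and $c_1(\xi_1)$ have the same divisibility. By Lemma \ref{Contact spin} both classes reduce mod $2$ to $w_2(X)$, so the hypotheses of Theorem \ref{thm on action of diff on 5-mfds} are met with $c=c_1(\xi_0)$ and $c'=c_1(\xi_1)$. Thus there is an orientation-preserving self-diffeomorphism $\phi\colon X\to X$ with $\phi^*c_1(\xi_1)=c_1(\xi_0)$. Then $\phi^*\xi_1$ is an almost contact structure with $c_1(\phi^*\xi_1)=\phi^*c_1(\xi_1)=c_1(\xi_0)$, so by Theorem \ref{Theorem geiges}(a) the almost contact structures $\phi^*\xi_1$ and $\xi_0$ are homotopic. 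Combining this homotopy with the diffeomorphism $\phi$ shows that $\xi_0$ and $\xi_1$ are equivalent.

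There is no substantial obstacle; the only points needing a word of justification are that the pullback of an almost contact structure under a diffeomorphism is again one (immediate, since the distribution and the fibrewise symplectic form pull back) and the naturality $c_1(\phi^*\xi)=\phi^*c_1(\xi)$, which is standard once one observes that a compatible complex structure on $\xi$ pulls back to a compatible complex structure on $\phi^*\xi$. Everything else is a bookkeeping assembly of the already-established Theorems \ref{Theorem geiges} and \ref{thm on action of diff on 5-mfds}.
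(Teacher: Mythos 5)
Your proposal is correct and follows essentially the same route as the paper, which likewise deduces the ``only if'' direction from the invariance of the Chern class under homotopies and of divisibility under self-diffeomorphisms, and the ``if'' direction from Theorem \ref{thm on action of diff on 5-mfds} together with Theorem \ref{Theorem geiges}(a). You have merely spelled out the details (including the use of Lemma \ref{Contact spin} to verify the mod $2$ hypothesis) that the paper leaves implicit.
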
    
One direction is clear because homotopies do not change the Chern class and self-diffeomorphisms of the manifold do not change the divisibility. The other direction follows from Theorem \ref{thm on action of diff on 5-mfds} and the first part of Theorem \ref{Theorem geiges}.

\begin{defn} For an almost contact structure $\xi$ on a simply-connected 5-manifold $X$ we denote by $d(\xi)$ the divisibility of $c_1(\xi)$ as a class in the free abelian group $H^2(X)$. 
\end{defn}

We call $d(\xi)$ the {\em level} of the almost contact structure $\xi$. By Corollary \ref{Class 5-mfd almost cont} almost contact structures and hence contact structures on a simply-connected 5-manifold $X$ naturally form a \textquotedblleft spectrum" consisting of levels which are indexed by the divisibility of the first Chern class. Two contact structures on $X$ are equivalent as almost contact structures if and only if they lie on the same level. By Lemma \ref{Contact spin} simply-connected spin 5-manifolds have only even levels and non-spin 5-manifolds only odd levels. In Section \ref{sect contact homology} we will use invariants from contact homology to investigate the \textquotedblleft fine-structure" of contact structures on each level in this spectrum. For instance, O.~van Koert \cite{Koe1} has shown that for many simply-connected 5-manifolds the lowest level, given by divisibility $0$, contains infinitely many inequivalent contact structures.

\section{Topology of circle bundles}\label{sect topology circle bundles}
In this section we collect some results on the topology of circle bundles. In particular, we determine which simply-connected closed 5-manifolds can arise as circle bundles over 4-manifolds. 
 
Let $M$ be a closed connected oriented $n$-manifold. For a second integral cohomology class $c$ on $M$ consider the map
\begin{equation*}
\langle c,-\rangle\colon H_2(M)\longrightarrow\mathbb{Z},
\end{equation*}
given by evaluation.
\begin{defn} We call the class $c$ {\em indivisible} if $\langle c,-\rangle$ is surjective.
\end{defn} 
Clearly, if the class $c$ is indivisible, then $c$ cannot be written as $c=ka$, with $k>1$ and $a\in H^2(M)$. By Poincar\'e duality it follows that a class $c\in H^2(M)$ is indivisible if and only if the map 
\begin{equation*}
c\,\cup\colon H^{n-2}(M)\longrightarrow H^n(M)\cong\mathbb{Z}
\end{equation*}
is surjective.

Suppose that $\pi\colon X\rightarrow M$ is the total space of an oriented circle bundle over $M$ with Euler class $e\in H^2(M)$. For the following proofs we will need two results which are probably well known but included here for completeness. The first result is related to the exact Gysin sequence \cite{MSt}:
\begin{equation*}
\ldots\stackrel{\pi^*}{\longrightarrow} H^k(X)\stackrel{\pi_*}{\longrightarrow} H^{k-1}(M) \stackrel{\cup e}{\longrightarrow} H^{k+1}(M)\stackrel{\pi^*}{\longrightarrow} H^{k+1}(X)\stackrel{\pi_*}{\longrightarrow}\ldots 
\end{equation*}
The homomorphism $\pi_*$ is called integration along the fibre and can be characterized in the following way.
\begin{lem}\label{int along fibre lemma} Integration along the fibre $\pi_*\colon H^{k+1}(X)\rightarrow H^k(M)$ is Poincar\'e dual to the map $\pi_*\colon H_{n-k}(X)\rightarrow H_{n-k}(M)$ induced by the projection.
\end{lem}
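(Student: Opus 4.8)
The plan is to compare the two maps after dualizing both sides via Poincaré duality on $X$ and $M$, reducing the statement to the well-known naturality of the cap product with the fundamental class under a proper (here: fibre bundle) map. Concretely, write $\dim X = n+1$ and $\dim M = n$. Poincaré duality gives isomorphisms $PD_X\colon H^{k+1}(X)\xrightarrow{\cong} H_{n-k}(X)$, $a\mapsto a\cap[X]$, and $PD_M\colon H^{k}(M)\xrightarrow{\cong} H_{n-k}(M)$, $b\mapsto b\cap[M]$. The claim "$\pi_*$ on cohomology is Poincaré dual to $\pi_*$ on homology" means precisely that the square with top arrow $\pi_*^{coh}\colon H^{k+1}(X)\to H^k(M)$, bottom arrow $\pi_*^{hom}\colon H_{n-k}(X)\to H_{n-k}(M)$, and vertical arrows $PD_X$, $PD_M$ commutes.

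First I would fix the normalization of integration along the fibre $\pi_*^{coh}$. The cleanest characterization, and the one that makes the diagram chase transparent, is via the projection formula: $\pi_*^{coh}$ is the unique $H^*(M)$-module map $H^{*+1}(X)\to H^*(M)$ that on the fibre restricts to the standard integration $H^1(S^1)\to H^0(\mathrm{pt})$, and it satisfies $\pi_*^{coh}(\pi^*b\cup a)=b\cup \pi_*^{coh}(a)$ for $b\in H^*(M)$, $a\in H^*(X)$. Then I would invoke the naturality of cap products under the map $\pi$: for $a\in H^{k+1}(X)$ and $b\in H^{k}(M)$ one has the identity $\pi_*\big((\pi^*b\cup a)\cap[X]\big) = b\cap\big(\pi_*(a\cap[X])\big)$ in $H_0$-type degrees, combined with $\pi_*[X] = 0$ if one only knows this naively — so the right tool is rather the statement $\pi_*(a\cap [X]) = (\pi_*^{coh}a)\cap [M]$, which is exactly what we want, proved by testing against all $b\in H^{n-k}(M)$ and using Poincaré duality on $M$ to separate classes.

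So the core computation is: for arbitrary $b\in H^{n-k}(M)$,
\begin{align*}
\langle b, \pi_*(a\cap[X])\rangle
&= \langle \pi^* b, a\cap [X]\rangle
= \langle \pi^* b\cup a, [X]\rangle \\
&= \langle \pi_*^{coh}(\pi^* b\cup a), [M]\rangle
= \langle b\cup \pi_*^{coh}(a), [M]\rangle
= \langle b, (\pi_*^{coh}a)\cap[M]\rangle,
\end{align*}
where the first equality is the definition of the induced map $\pi_*$ on homology paired with cohomology, the second is the cap–cup adjunction, the third is the defining property that $\pi_*^{coh}$ followed by evaluation on $[M]$ equals evaluation on $[X]$ (this is the degree-$(n+1)$ case of integration along the fibre, i.e.\ that $\pi_*^{coh}$ commutes with evaluation on fundamental classes for a closed oriented fibre bundle with closed oriented fibre $S^1$), the fourth is the projection formula, and the fifth is again cap–cup adjunction on $M$. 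Since this holds for all $b$ and the pairing $H^{n-k}(M)\times H_{n-k}(M)\to\mathbb{Z}$ together with Poincaré duality detects homology classes, we conclude $\pi_*(a\cap[X]) = (\pi_*^{coh}a)\cap[M]$, which is the asserted commutativity $PD_M\circ\pi_*^{coh} = \pi_*^{hom}\circ PD_X$.

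The main obstacle is purely bookkeeping: pinning down the normalization of $\pi_*^{coh}$ so that the third equality above — "$\langle \pi_*^{coh} c,[M]\rangle = \langle c,[X]\rangle$ for $c\in H^{n+1}(X)$" — is literally true with the correct sign, which amounts to orienting the $S^1$ fibre compatibly with the orientations of $X$ and $M$ (the Gysin sequence's $\pi_*$ is defined with exactly this convention, cf.\ \cite{MSt}). Everything else is the standard naturality of the cap product under continuous maps, $\pi_*(\pi^*b\cap z)=b\cap\pi_*z$, applied to $z=[X]$. No genuine topological input beyond Poincaré duality and these formal properties is needed.
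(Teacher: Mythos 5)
Your route is genuinely different from the paper's. The paper identifies $X$ with the boundary of the disc bundle $D\to M$ and reads the statement off from the description of $\pi_*$ as $\tau^{-1}\circ\delta$, using Lefschetz duality for the pair $(D,\partial D)$ and the compatibility of connecting homomorphisms with duality; you instead work entirely on $X$ and $M$, characterising the Gysin map by the projection formula and by compatibility with evaluation on fundamental classes, and then test the desired identity $\pi_*(a\cap[X])=(\pi_*^{coh}a)\cap[M]$ against cohomology classes $b$. Your chain of equalities is correct (with the orientation conventions you rightly flag), and the inputs --- the adjunction $\langle b,\pi_*z\rangle=\langle\pi^*b,z\rangle$, cup--cap adjunction, the projection formula for the Thom-isomorphism definition of $\pi_*^{coh}$, and the top-degree identity $\langle\pi_*^{coh}c,[M]\rangle=\langle c,[X]\rangle$ --- are all standard. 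Your approach has the advantage of not needing the disc bundle at all; the paper's has the advantage of being manifestly an identity of maps rather than an identity tested against a pairing.

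There is, however, one step that fails as written: the conclusion ``since this holds for all $b$ \dots\ we conclude $\pi_*(a\cap[X])=(\pi_*^{coh}a)\cap[M]$.'' The Kronecker pairing $H^{n-k}(M;\mathbb{Z})\times H_{n-k}(M;\mathbb{Z})\to\mathbb{Z}$ is degenerate on torsion (by the Universal Coefficient Theorem its left kernel is exactly the torsion of $H_{n-k}(M)$), so your computation only shows that the two classes agree modulo torsion; invoking Poincar\'e duality does not help, since it merely converts the question into the same question for the cup-product pairing. The lemma is stated for an arbitrary closed oriented $n$-manifold $M$, whose homology may have torsion. The gap is easily repaired --- rerun the identical computation with coefficients in an arbitrary abelian group (every map you use is natural in the coefficients, and the pairings over all coefficient groups do detect integral classes), or note that in the paper's eventual application $M$ is a simply-connected $4$-manifold with torsion-free homology --- but as written the argument proves the statement only up to torsion. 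A smaller point: the top-degree identity $\langle\pi_*^{coh}c,[M]\rangle=\langle c,[X]\rangle$ is itself the case $k=n$ of the lemma (since $PD$ in degree $0$ is evaluation against the fundamental class and $\pi_*\colon H_0(X)\to H_0(M)$ is the identity on generators), so you should establish it independently, e.g.\ from the Thom class, rather than treat it as part of the definition.
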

This follows because integration along the fibre is a so-called transfer or shriek map \cite{Br}. The second result is related to the long exact homotopy sequence associated to the fibration
\begin{equation*}
\ldots\longrightarrow \pi_2(M)\stackrel{\partial}{\longrightarrow} \pi_1(S^1) \longrightarrow \pi_1(X) \stackrel{\pi_*}{\longrightarrow} \pi_1(M)\longrightarrow 1.
\end{equation*}
\begin{lem}\label{lem relation partial sequence hurewicz euler} The map $\partial\colon \pi_2(M)\rightarrow \pi_1(S^1)\cong\mathbb{Z}$ in the long exact homotopy sequence for fibre bundles is given by
\begin{equation*}
\pi_2(M)\stackrel{h}{\longrightarrow}H_2(M)\stackrel{\langle e,-\rangle}{\longrightarrow}\mathbb{Z}
\end{equation*}
where $h$ denotes the Hurewicz homomorphism.
\end{lem}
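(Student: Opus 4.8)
The plan is to reduce everything to the universal circle bundle by naturality. Classify $\pi\colon X\to M$ by a map $f\colon M\to BS^1=\mathbb{C}P^\infty=K(\mathbb{Z},2)$, so that $X\cong f^*(ES^1)$ and $e=f^*\iota$, where $\iota\in H^2(BS^1)$ is the canonical generator (the class defining the Euler class). The bundle map $X\to ES^1$ covering $f$ induces a morphism between the long exact homotopy sequences of the two fibrations; in particular it gives a commutative square
\begin{equation*}
\begin{CD}
\pi_2(M) @>{\partial}>> \pi_1(S^1)\\
@V{f_*}VV @| \\
\pi_2(BS^1) @>{\partial_0}>> \pi_1(S^1).
\end{CD}
\end{equation*}
Since $ES^1$ is contractible, $\partial_0$ is an isomorphism, so $\partial=\partial_0\circ f_*$ and it suffices to understand $\partial_0$ and $f_*$ separately.

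For the universal bundle, $BS^1=K(\mathbb{Z},2)$, so the Hurewicz homomorphism $h\colon\pi_2(BS^1)\to H_2(BS^1)\cong\mathbb{Z}$ is an isomorphism and evaluation $\langle\iota,-\rangle\colon H_2(BS^1)\to\mathbb{Z}$ is inverse to it. The generator $\iota$ is normalized precisely so that, under the standard identification $\pi_1(S^1)\cong\mathbb{Z}$, one has $\partial_0=\pm\langle\iota,-\rangle\circ h$; concretely this is the assertion that for the Hopf bundle $S^1\to S^3\to S^2$ the connecting map carries a generator of $\pi_2(S^2)$ to a generator of $\pi_1(S^1)$ while $\langle e,[S^2]\rangle=\pm1$. (The sign depends only on orientation conventions and is irrelevant below, where only the image of $\partial$ is used.)

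Finally, naturality of the Hurewicz homomorphism for $f$ gives $f_*\circ h=h\circ f_*$, which combined with $\langle\iota,f_*\alpha\rangle=\langle f^*\iota,\alpha\rangle=\langle e,\alpha\rangle$ yields, for $[g]\in\pi_2(M)$,
\begin{equation*}
\partial[g]=\partial_0(f_*[g])=\pm\langle\iota,h(f_*[g])\rangle=\pm\langle\iota,f_*(h[g])\rangle=\pm\langle e,h[g]\rangle,
\end{equation*}
as claimed.

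The main obstacle is the normalization in the middle step: pinning down that the identification $\pi_2(BS^1)\xrightarrow{\sim}\pi_1(S^1)$ induced by $\partial_0$ is exactly the one matched by evaluation against $\iota$, i.e.\ getting generators and signs consistent with the chosen definition of the Euler class. Everything else is formal. A more hands-on alternative bypasses $BS^1$ altogether: represent $[g]\in\pi_2(M)$ by $g\colon S^2\to M$, pull $X$ back to a circle bundle over $S^2$, and observe that such bundles are classified by a single integer which equals on the one hand $\langle g^*e,[S^2]\rangle=\langle e,h[g]\rangle$ and on the other hand --- by the very definition of the connecting homomorphism as the obstruction to lifting $g$ to $X$ --- the integer $\partial[g]$; again the only subtlety is matching the clutching-function integer with the Euler number.
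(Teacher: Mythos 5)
Your argument is correct (up to the same sign ambiguity that both your proof and the paper's leave unresolved, which you rightly flag and which is harmless since only the image of $\partial$ is used later), but it takes a genuinely different route. The paper traverses the naturality square in the opposite direction: instead of mapping forward to the universal bundle over $BS^1$, it pulls the bundle back along an arbitrary map $f\colon S^2\rightarrow M$ representing a class in $\pi_2(M)$, thereby reducing to the case $M=S^2$; there it notes that $\pi_1(S^1)\rightarrow\pi_1(E)$ is onto, so $\pi_1(E)\cong H_1(E)\cong H^2(E)$, computes $H^2(E)\cong\mathbb{Z}/a\mathbb{Z}$ from the Gysin sequence with $a=\langle e(E),[S^2]\rangle$, and reads off from exactness of $\pi_2(S^2)\stackrel{\partial}{\rightarrow}\pi_1(S^1)\rightarrow\pi_1(E)\rightarrow 1$ that $\partial$ is multiplication by $\pm a$. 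This is essentially the ``hands-on alternative'' you sketch in your last paragraph, except that the clutching-function normalization you worry about there is replaced by a Gysin-sequence computation of $H^2(E)$, which sidesteps it. Your classifying-space route buys a more transparent role for the Euler class (it is literally $f^*\iota$, and the Hurewicz map for $K(\mathbb{Z},2)$ is an isomorphism for free), at the cost of having to pin down the normalization of $\partial_0$ against $\langle\iota,-\rangle$ via the Hopf bundle; the paper's route needs no input about $BS^1$ and stays within the Gysin-sequence toolbox already in constant use throughout that section.
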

This follows for example by considering the universal bunde $ES^1\rightarrow BS^1$.
\begin{lem} The Euler class $e$ is indivisible if and only if $\pi_*\colon H_1(X)\rightarrow H_1(M)$ is an isomorphism. Both statements are equivalent to the fibre $S^1\subset X$ being null-homologous.
\end{lem}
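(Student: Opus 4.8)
The plan is to prove the three equivalences in a cycle, using the Gysin sequence together with Lemmas~\ref{int along fibre lemma} and~\ref{lem relation partial sequence hurewicz euler}. First I would unwind the definition: the Euler class $e\in H^2(M)$ is indivisible exactly when the evaluation map $\langle e,-\rangle\colon H_2(M)\to\mathbb Z$ is surjective. The relevant portion of the Gysin sequence is
\begin{equation*}
H^0(M)\xrightarrow{\ \cup e\ } H^2(M)\xrightarrow{\ \pi^*\ } H^2(X)\xrightarrow{\ \pi_*\ } H^1(M)\xrightarrow{\ \cup e\ } H^3(M),
\end{equation*}
and dually, via Poincar\'e duality on the closed oriented manifolds $M^4$ and $X^5$, Lemma~\ref{int along fibre lemma} identifies $\pi_*\colon H^{k+1}(X)\to H^k(M)$ with the pushforward $\pi_*\colon H_{n-k}(X)\to H_{n-k}(M)$ on homology; in particular $\pi_*\colon H^5(X)\to H^4(M)$ corresponds to $\pi_*\colon H_0(X)\to H_0(M)$, which is an isomorphism, and $\pi_*\colon H^2(X)\to H^1(M)$ corresponds to $\pi_*\colon H_4(X)\to H_4(M)$.

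Next I would show \emph{$e$ indivisible $\iff$ $\pi_*\colon H_1(X)\to H_1(M)$ is an isomorphism}. The cleanest route is through the cohomological version: consider
\begin{equation*}
H^1(M)\xrightarrow{\ \pi^*\ } H^1(X)\xrightarrow{\ \pi_*\ } H^0(M)\xrightarrow{\ \cup e\ } H^2(M)\xrightarrow{\ \pi^*\ } H^2(X).
\end{equation*}
The map $\cup e\colon H^0(M)\cong\mathbb Z\to H^2(M)$ sends the generator to $e$, so by Poincar\'e duality on $M$ its ``surjectivity onto a direct summand'' is precisely indivisibility of $e$; more to the point, $e$ is indivisible iff $\cup e$ is injective \emph{and} $H^2(M)/\mathbb Z e$ is torsion free, but what the exact sequence actually gives is: $\pi^*\colon H^2(M)\to H^2(X)$ has kernel the image of $\cup e$, and $\pi_*\colon H^1(X)\to H^0(M)$ has image the kernel of $\cup e\colon H^0(M)\to H^2(M)$. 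Since $H^0(M)\cong\mathbb Z$ is free, $\cup e$ is injective iff $e\ne 0$; but to get indivisibility I should instead dualize and work with $\langle e,-\rangle\colon H_2(M)\to\mathbb Z$ directly. So: by Lemma~\ref{lem relation partial sequence hurewicz euler}, $\partial\colon\pi_2(M)\to\pi_1(S^1)\cong\mathbb Z$ is the composite $\langle e,-\rangle\circ h$ with $h$ the Hurewicz map, and its image equals the image of $\langle e,-\rangle$ on $H_2(M)$ because $h$ is surjective modulo the (irrelevant here) action of $\pi_1$, or one argues at the level of the homology long exact sequence of the fibration instead. In any case the image of $\partial$ is $d\mathbb Z$ where $d$ is the divisibility of $e$. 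The long exact homotopy sequence then gives $0\to\mathbb Z/d\mathbb Z\to\pi_1(X)\to\pi_1(M)\to 1$, i.e. the fibre inclusion $S^1\hookrightarrow X$ has image of order dividing... its order in $\pi_1(X)$ is $\mathbb Z/d\mathbb Z$-worth. Abelianizing, $H_1(S^1)\cong\mathbb Z\to H_1(X)$ has cokernel $H_1(M)$ and the fibre class has order $d$ in $H_1(X)$; hence $\pi_*\colon H_1(X)\to H_1(M)$ is an isomorphism iff $d=1$ iff $e$ is indivisible, and the fibre is null-homologous iff $d=1$ as well. This simultaneously closes the loop with the third statement.

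The main obstacle I anticipate is the passage from the homotopy long exact sequence to the \emph{homological} statement about $H_1$: one must be careful that the image of $\partial\colon\pi_2(M)\to\pi_1(S^1)$ really does equal the image of $\langle e,-\rangle\colon H_2(M)\to\mathbb Z$, i.e. that nothing is lost when one only knows $\partial = \langle e,-\rangle\circ h$ and $h$ need not be surjective. The fix is to use instead the homology Wang/Gysin exact sequence of the bundle, or equivalently to observe that $H_2(X)\to H_2(M)\to\mathbb Z$ already computes the relevant cokernel: from the Gysin sequence $H_2(X)\xrightarrow{\pi_*}H_2(M)\xrightarrow{\cap e}H_0(M)\cong\mathbb Z$ one reads off that the fibre $[S^1]\in H_1(X)$ has order equal to the divisibility of $e$, and then $H_1(X)\cong\mathbb Z/d\mathbb Z \oplus (\text{stuff mapping isomorphically to }H_1(M))$ only after checking the sequence $0\to\operatorname{coker}(\cap e)\to H_1(X)\to H_1(M)\to 0$ splits or at least that $\pi_*$ is surjective with the stated kernel. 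Since $H_1(M)=0$ in the simply-connected case of interest, but the lemma is stated generally, I would give the argument via the exact sequence $H_2(M)\xrightarrow{\cap e}H_0(M)\to H_1(X)\xrightarrow{\pi_*} H_1(M)\to 0$ (the low-degree part of the homology Gysin sequence), from which all three equivalences are immediate: $\pi_*$ on $H_1$ is iso $\iff$ $\cap e$ is surjective $\iff$ $e$ indivisible $\iff$ $[S^1]=0$ in $H_1(X)$.
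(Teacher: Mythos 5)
Your final argument is correct, and its core is the Poincar\'e dual of the paper's: the paper works with the top-degree cohomological Gysin sequence $H^{n-2}(M)\xrightarrow{\cup e}H^n(M)\to H^n(X)\xrightarrow{\pi_*}H^{n-1}(M)\to 0$ together with the characterization of indivisibility via surjectivity of $\cup e\colon H^{n-2}(M)\to H^n(M)$ and Lemma~\ref{int along fibre lemma}, whereas you dualize once more and read everything off the low-degree homology Gysin sequence $H_2(M)\xrightarrow{\cap e}H_0(M)\to H_1(X)\xrightarrow{\pi_*}H_1(M)\to 0$. For the equivalence ``$e$ indivisible $\iff$ $\pi_*$ iso on $H_1$'' the two arguments are the same up to Poincar\'e duality. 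Where you genuinely diverge is the third statement: the paper gets ``fibre null-homologous'' by abelianizing the homotopy exact sequence to $H_1(S^1)\to H_1(X)\to H_1(M)\to 0$, while you identify the connecting map $H_0(M)\to H_1(X)$ in the homology Gysin sequence with the inclusion of the fibre class; both are fine, and yours has the small advantage of extracting all three equivalences from a single exact sequence (and even the order of the fibre class). Your earlier detours are harmless but dispensable: the $H^0(M)\xrightarrow{\cup e}H^2(M)$ portion only detects whether $e=0$, as you note, and you correctly flag and then sidestep the real pitfall in the homotopy-theoretic route, namely that $\operatorname{im}\partial=\operatorname{im}(\langle e,-\rangle\circ h)$ only sees spherical classes, so it computes when the fibre is null-\emph{homotopic}, not null-homologous --- which is exactly why the paper, too, switches to an abelianized/homological sequence at that point.
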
 
\begin{proof}
Consider the following part of the Gysin sequence:
\begin{equation*}
\ldots\longrightarrow H^{n-2}(M) \stackrel{\cup e}{\longrightarrow} H^{n}(M)\longrightarrow H^{n}(X)\stackrel{\pi_*}{\longrightarrow} H^{n-1}(M)\longrightarrow 0.
\end{equation*}
This shows that $e$ is indivisible if and only if $\pi_*\colon H^n(X)\rightarrow H^{n-1}(M)$ is an isomorphism, in other words
\begin{equation*}
\pi_*\colon H_1(X)\longrightarrow H_1(M)
\end{equation*}
is an isomorphism.
The long exact homotopy sequence of the fibration $S^1\rightarrow X\rightarrow M$ induces by abelianization an exact sequence
\begin{equation*}
H_1(S^1)\longrightarrow H_1(X)\longrightarrow H_1(M)\longrightarrow 0. 
\end{equation*}
Hence we see that $e$ is indivisible if and only if the fibre $S^1\subset X$ is null-homologous. 
\qed\end{proof}
From the long exact homotopy sequence above we see that the fibre is {\em null-homotopic} if and only if $\partial\colon \pi_2(M)\rightarrow \pi_1(S^1)$ is surjective. By Lemma \ref{lem relation partial sequence hurewicz euler} this happens if and only if $\langle e,-\rangle$ is surjective on spherical classes. Both statements are equivalent to 
\begin{equation*}
\pi_*\colon \pi_1(X)\longrightarrow \pi_1(M)
\end{equation*}
being an isomorphism. 

\begin{lem} \label{both simply} $X$ is simply-connected if and only if $M$ is simply-connected and $e$ is indivisible. 
\end{lem}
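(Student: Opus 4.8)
The plan is to read everything off the long exact homotopy sequence of the fibration $S^1\to X\to M$, combined with Lemma~\ref{lem relation partial sequence hurewicz euler} and the Hurewicz theorem. Recall the portion of the sequence already displayed above,
\begin{equation*}
\pi_2(M)\stackrel{\partial}{\longrightarrow}\pi_1(S^1)\longrightarrow\pi_1(X)\stackrel{\pi_*}{\longrightarrow}\pi_1(M)\longrightarrow 1,
\end{equation*}
and the observation preceding the lemma: the map $\pi_*\colon\pi_1(X)\to\pi_1(M)$ is an isomorphism if and only if $\partial$ is surjective, equivalently (by Lemma~\ref{lem relation partial sequence hurewicz euler}) if and only if $\langle e,-\rangle$ is surjective on the subgroup of spherical classes, i.e.~on the image of the Hurewicz map $h\colon\pi_2(M)\to H_2(M)$.

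First I would treat the forward implication. Assume $\pi_1(X)=0$. Exactness at $\pi_1(X)$ shows $\pi_*\colon\pi_1(X)\to\pi_1(M)$ is onto, so $\pi_1(M)=0$ and $M$ is simply-connected. The map $\pi_*$ is then trivially an isomorphism, so by the remark above $\langle e,-\rangle$ is surjective on spherical classes. Since $M$ is simply-connected, the Hurewicz theorem gives that $h\colon\pi_2(M)\to H_2(M)$ is an isomorphism, so \emph{every} class in $H_2(M)$ is spherical and $\langle e,-\rangle\colon H_2(M)\to\mathbb{Z}$ is surjective; that is precisely indivisibility of $e$.

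For the converse, assume $M$ is simply-connected and $e$ is indivisible. Simple-connectivity of $M$ again makes $h$ an isomorphism, so indivisibility of $e$ (surjectivity of $\langle e,-\rangle$ on all of $H_2(M)$) is equivalent to surjectivity on spherical classes, hence $\partial$ is surjective. Exactness at $\pi_1(S^1)$ then forces the map $\pi_1(S^1)\to\pi_1(X)$ to be the zero map, while exactness at $\pi_1(X)$ together with $\pi_1(M)=0$ forces that same map to be onto; therefore $\pi_1(X)=0$.

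The argument is essentially bookkeeping with the exact sequence, so there is no serious obstacle; the one point that needs care is the gap between the two notions of ``indivisible'': the homotopy sequence only ever sees the spherical part of $H_2(M)$, so the equivalence with indivisibility as defined via evaluation on all of $H_2(M)$ genuinely uses the Hurewicz isomorphism and hence the hypothesis that $M$ is simply-connected. This is why the statement pairs ``$e$ indivisible'' with ``$M$ simply-connected'' rather than asserting either one in isolation.
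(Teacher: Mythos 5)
Your proof is correct and follows essentially the same route as the paper: both directions are read off the long exact homotopy sequence of the fibration together with Lemma~\ref{lem relation partial sequence hurewicz euler} and the Hurewicz isomorphism. Your explicit remark that the Hurewicz theorem is what bridges surjectivity on spherical classes and genuine indivisibility is a point the paper's forward direction leaves implicit, but the argument is the same.
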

\begin{proof} If $X$ is simply-connected the long exact homotopy sequence shows that $\pi_1(M)=1$ and $\partial\colon \pi_2(M)\rightarrow \pi_1(S^1)$ is surjective. Hence $M$ is simply-connected and the surjectivity of $\partial$ implies that $e$ is indivisible. Conversely, suppose that $M$ is simply-connected and $e$ is indivisible. Then the Hurewicz map $h\colon \pi_2(M)\rightarrow H_2(M)$ is an isomorphism and it follows that $\partial$ is surjective. The long exact homotopy sequence then implies the exact sequence $1\rightarrow \pi_1(X)\rightarrow 1$. Hence $\pi_1(X)=1$. 
\qed\end{proof}
The next lemma follows from the Gysin sequence.
\begin{lem} \label{pi* surj} 
Suppose the first Betti number of $M$ vanishes, $b_1(M)=0$. Then the map $\pi^*\colon H^2(M)\rightarrow H^2(X)$ is surjective with kernel $\mathbb{Z} e$. 
\end{lem}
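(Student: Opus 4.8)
The plan is to read everything off the Gysin sequence of the circle bundle $\pi\colon X\to M$ recalled above. Extracting the segment around degree two gives the exact sequence
\begin{equation*}
H^0(M)\stackrel{\cup e}{\longrightarrow} H^2(M)\stackrel{\pi^*}{\longrightarrow} H^2(X)\stackrel{\pi_*}{\longrightarrow} H^1(M)\stackrel{\cup e}{\longrightarrow} H^3(M),
\end{equation*}
and the two ingredients I need from it are: (i) $H^1(M)=0$, which forces $\pi_*$ out of $H^2(X)$ to be zero and hence $\pi^*$ surjective; and (ii) an identification of the image of $\cup e\colon H^0(M)\to H^2(M)$, which by exactness equals $\ker\pi^*$.

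First I would verify $H^1(M)=0$. By the Universal Coefficient Theorem, $H^1(M)\cong\mbox{Hom}(H_1(M),\mathbb{Z})\oplus\mbox{Ext}(H_0(M),\mathbb{Z})$; the $\mbox{Ext}$-summand vanishes because $H_0(M)\cong\mathbb{Z}$ is free, while $\mbox{Hom}(H_1(M),\mathbb{Z})$ is free abelian of rank $b_1(M)=0$. Hence $H^1(M)=0$. This is the only place the hypothesis $b_1(M)=0$ enters, and the only mildly delicate point is to notice that $b_1(M)=0$ annihilates $H^1(M;\mathbb{Z})$ \emph{entirely}, not merely its free part --- which is true precisely because $H_0(M)$ is torsion free and so contributes no $\mbox{Ext}$ term. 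Consequently $\pi_*\colon H^2(X)\to H^1(M)$ is the zero homomorphism, and by exactness of the sequence above $\pi^*\colon H^2(M)\to H^2(X)$ is surjective.

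For the kernel, exactness gives $\ker\pi^*=\mbox{im}\,(\cup e\colon H^0(M)\to H^2(M))$. Since $M$ is connected, $H^0(M)=\mathbb{Z}$ is generated by the unit class $1$, and $1\cup e=e$, so this image is exactly the cyclic subgroup $\mathbb{Z}\cdot e\subset H^2(M)$ generated by the Euler class. Combining the two statements proves the lemma. I do not expect any serious obstacle here: once $H^1(M)=0$ is established, the rest is a two-step diagram chase in the Gysin sequence, and the only subtlety worth spelling out carefully is the $\mbox{Ext}$-vanishing in step one.
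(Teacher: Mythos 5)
Your argument is correct and is essentially the paper's own proof: both read surjectivity and the kernel directly off the Gysin sequence segment $H^0(M)\stackrel{\cup e}{\to}H^2(M)\stackrel{\pi^*}{\to}H^2(X)\to H^1(M)$, using $H^1(M)=0$. Your explicit Universal Coefficient computation of $H^1(M)=0$ from $b_1(M)=0$ is a detail the paper leaves implicit, but it does not change the route.
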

Similarly we have:
\begin{lem}\label{im proj H2}
The image of the map $\pi_*\colon H_2(X)\rightarrow H_2(M)$ is the kernel of $\langle e,-\rangle$. If $H_1(M)=0$, then this map is injective.
\end{lem}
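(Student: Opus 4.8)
The plan is to transport the statement to cohomology via Poincar\'e duality, where it becomes an immediate consequence of the exact Gysin sequence together with Lemma~\ref{int along fibre lemma} (which was recorded precisely to translate between the homological $\pi_*$ and integration along the fibre).

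Write $n=\dim M$, so that $X$, being a circle bundle over a closed oriented manifold, is a closed oriented $(n+1)$-manifold. First I would apply Lemma~\ref{int along fibre lemma} in the degree with $n-k=2$, i.e.\ $k=n-2$: it identifies the projection-induced map $\pi_*\colon H_2(X)\to H_2(M)$ with integration along the fibre $\pi_*\colon H^{n-1}(X)\to H^{n-2}(M)$ under the Poincar\'e duality isomorphisms $H_2(X)\cong H^{n-1}(X)$ and $H_2(M)\cong H^{n-2}(M)$. So it suffices to compute the image of this cohomological $\pi_*$ and carry the answer back through Poincar\'e duality. For that I would read off the relevant segment of the Gysin sequence,
\begin{equation*}
H^{n-1}(X)\stackrel{\pi_*}{\longrightarrow}H^{n-2}(M)\stackrel{\cup e}{\longrightarrow}H^{n}(M),
\end{equation*}
whose exactness gives $\operatorname{im}\bigl(\pi_*\colon H^{n-1}(X)\to H^{n-2}(M)\bigr)=\ker\bigl(\cup e\colon H^{n-2}(M)\to H^{n}(M)\bigr)$.

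It then remains to note that, under Poincar\'e duality on $M$, the homomorphism $\cup e\colon H^{n-2}(M)\to H^{n}(M)\cong\mathbb{Z}$ corresponds exactly to $\langle e,-\rangle\colon H_2(M)\to\mathbb{Z}$: for $\beta\in H^{n-2}(M)$ one has the standard identity $\langle e\cup\beta,[M]\rangle=\langle e,\beta\cap[M]\rangle$, while $\beta\mapsto\beta\cap[M]$ is the Poincar\'e duality isomorphism $H^{n-2}(M)\cong H_2(M)$ and $\gamma\mapsto\langle\gamma,[M]\rangle$ identifies $H^{n}(M)$ with $\mathbb{Z}$. Hence $\ker(\cup e)$ is carried isomorphically onto $\ker\langle e,-\rangle$, and stringing the three identifications together yields $\operatorname{im}\bigl(\pi_*\colon H_2(X)\to H_2(M)\bigr)=\ker\langle e,-\rangle$.

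The one point needing genuine care — and the step I would check most carefully — is that the Poincar\'e duality isomorphism on $M$ implicit in Lemma~\ref{int along fibre lemma} agrees (up to a sign, which is irrelevant for kernels and images) with the cap-product-with-$[M]$ isomorphism used in the last step, equivalently that capping with $[M]$ intertwines $\cup e$ with $\langle e,-\rangle$. This is the routine cup/cap adjunction, so I do not anticipate a real obstacle; essentially all the work has already been done in Lemma~\ref{int along fibre lemma}.
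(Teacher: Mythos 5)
Your proposal is correct and follows essentially the same route as the paper: read off the segment $H^{n-1}(X)\stackrel{\pi_*}{\to}H^{n-2}(M)\stackrel{\cup e}{\to}H^n(M)$ of the Gysin sequence, identify the cohomological $\pi_*$ with the homological one via Lemma~\ref{int along fibre lemma}, and translate $\cup\, e$ into $\langle e,-\rangle$ by the cup/cap adjunction under Poincar\'e duality. The only difference is that you spell out the adjunction step in more detail than the paper, which simply asserts that $e\cup\alpha=0$ iff $\langle e,PD(\alpha)\rangle=0$.
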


We now determine when the total space $X$ is spin.
\begin{lem}\label{X is spin} The total space $X$ is spin if and only if $w_2(M)\equiv \alpha e \bmod 2$ for some $\alpha\in\{0,1\}$, i.e.~if and only if $M$ is spin or $w_2(M)\equiv e \bmod 2$. 
\end{lem}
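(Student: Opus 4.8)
The plan is to express the spin condition on $X$ purely in terms of the map $\pi^*$ on mod~$2$ cohomology, and then read off the relevant kernel from the Gysin sequence.

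First I would compute $w_2(X)$. An oriented circle bundle is the same thing as a principal $S^1$-bundle, so the fundamental vector field of the $S^1$-action is a nowhere-vanishing section of the vertical tangent bundle $T^vX$; thus $T^vX$ is trivial. A connection (or a Riemannian metric) splits the short exact sequence $0\to T^vX\to TX\to \pi^*TM\to 0$, so $TX\cong \pi^*TM\oplus\underline{\mathbb{R}}$. By naturality of the Stiefel--Whitney classes and the Whitney sum formula, $w_i(X)=\pi^*w_i(M)$ for all $i$; in particular $w_1(X)=\pi^*w_1(M)=0$, consistent with $X$ being oriented, and $w_2(X)=\pi^*w_2(M)$. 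Since $X$ is oriented, it is spin if and only if $w_2(X)=0$, i.e.\ if and only if $\pi^*w_2(M)=0$ in $H^2(X;\mathbb{Z}_2)$.

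It then remains to determine the kernel of $\pi^*\colon H^2(M;\mathbb{Z}_2)\to H^2(X;\mathbb{Z}_2)$. The Gysin sequence is valid with $\mathbb{Z}_2$-coefficients, with the Euler class replaced by its mod~$2$ reduction $\bar e\in H^2(M;\mathbb{Z}_2)$; the relevant segment is
\[
H^0(M;\mathbb{Z}_2)\xrightarrow{\,\cup\bar e\,}H^2(M;\mathbb{Z}_2)\xrightarrow{\,\pi^*\,}H^2(X;\mathbb{Z}_2).
\]
Exactness at the middle term shows that $\ker\pi^*$ is the image of cup product with $\bar e$ on $H^0(M;\mathbb{Z}_2)$, and since $M$ is connected this image is $\{0,\bar e\}$. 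Combining with the previous step, $X$ is spin if and only if $w_2(M)\in\{0,\bar e\}$, i.e.\ $w_2(M)\equiv\alpha e\pmod 2$ for some $\alpha\in\{0,1\}$ --- equivalently, $M$ is spin or $w_2(M)\equiv e\pmod 2$.

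The only point that is not entirely formal is the identification $w_2(X)=\pi^*w_2(M)$, which rests on the triviality of the vertical tangent bundle of an oriented $S^1$-bundle; everything else is a direct application of exactness in the mod~$2$ Gysin sequence, so I anticipate no genuine obstacle.
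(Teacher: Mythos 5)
Your proposal is correct and follows essentially the same argument as the paper: both establish $w_2(X)=\pi^*w_2(M)$ from the splitting $TX\cong\pi^*TM\oplus\underline{\mathbb{R}}$ and the Whitney sum formula, and then identify $\ker\pi^*=\{0,\bar e\}$ via the mod $2$ Gysin sequence. Your additional justification of the triviality of the vertical tangent bundle is a welcome elaboration but does not change the route.
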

\begin{proof}
We claim that the following relation holds:
\begin{equation*}
w_2(X)=\pi^*w_2(M).
\end{equation*}
This follows because the tangent bundle of $X$ is given by $TX=\pi^*TM\oplus\mathbb{R}$ and the Whitney sum formula implies $w_2(TX)=w_2(\pi^*TM)\cup w_0(\mathbb{R}) = \pi^*w_2(TM)$. Hence $X$ is spin if and only if $w_2(M)$ is in the kernel of $\pi^*$. We consider the following part of the $\mathbb{Z}_2$-Gysin sequence:
\begin{equation*}
H^0(M;\mathbb{Z}_2) \stackrel{\cup \overline{e}}{\longrightarrow} H^2(M;\mathbb{Z}_2)\stackrel{\pi^*}{\longrightarrow} H^2(X;\mathbb{Z}_2),
\end{equation*}
where $\overline{e}$ denotes the mod 2 reduction of $e$. Hence the kernel of $\pi^*$ is $\{0,\overline{e}\}$. This implies the claim. 
\qed\end{proof}
We now specialize to the case where the dimension of $M$ is equal to 4. 
\begin{thm}\label{class 4-5 circle bundles} Let $X$ be a simply-connected closed oriented 5-manifold which is a circle bundle over a closed oriented connected 4-manifold $M$. Then $M$ is simply-connected and the Euler class $e$ is indivisible. Moreover, the integral homology and cohomology of $X$ are torsion free and given by:
\begin{itemize}
\item $H_0(X)\cong H_5(X)\cong\mathbb{Z}$
\item $H_1(X)\cong H_4(X)\cong 0$
\item $H_2(X)\cong H_3(X)\cong \mathbb{Z}^{b_2(M)-1}$. 
\end{itemize}
\end{thm}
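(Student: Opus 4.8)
The plan is to derive both assertions from Lemma \ref{both simply} together with the Gysin sequence of the circle bundle $\pi\colon X\to M$. The first statement is immediate: by Lemma \ref{both simply}, since $X$ is simply-connected, $M$ is simply-connected and $e$ is indivisible. Consequently $b_1(M)=0$, and $M$, being a closed oriented simply-connected $4$-manifold, has $H^0(M)\cong H^4(M)\cong\mathbb Z$, $H^1(M)\cong H^3(M)\cong 0$, and $H^2(M)$ free abelian of rank $b_2(M)$ (torsion free, as for any closed simply-connected $4$-manifold). I would also observe that $b_2(M)\ge 1$, since otherwise $H^2(M)=0$, so $e=0$ and $\langle e,-\rangle$ fails to be surjective, contradicting indivisibility. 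Two consequences of indivisibility will be used repeatedly: first, $e$ is a primitive element of the free abelian group $H^2(M)$, hence extends to a basis, so $H^2(M)/\mathbb Z e\cong\mathbb Z^{b_2(M)-1}$; second, by Poincar\'e duality on $M$ (as remarked after the definition of indivisibility), $\cup e\colon H^2(M)\to H^4(M)\cong\mathbb Z$ is surjective, so $\ker(\cup e\colon H^2(M)\to H^4(M))$ is free of rank $b_2(M)-1$.

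Next I would walk through the Gysin sequence $\cdots\to H^{j-2}(M)\xrightarrow{\cup e}H^j(M)\xrightarrow{\pi^*}H^j(X)\xrightarrow{\pi_*}H^{j-1}(M)\xrightarrow{\cup e}H^{j+1}(M)\to\cdots$ one degree at a time. For $j=0,1$ the groups $H^{-1}(M)$ and $H^1(M)$ vanish while $\cup e\colon H^0(M)\to H^2(M)$ is injective (as $e$ has infinite order), giving $H^0(X)\cong\mathbb Z$ and $H^1(X)=0$. For $j=2$, $H^1(M)=0$ yields $H^2(X)\cong H^2(M)/\mathrm{im}(\cup e)=H^2(M)/\mathbb Z e\cong\mathbb Z^{b_2(M)-1}$, in agreement with Lemma \ref{pi* surj}. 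For $j=3$, $H^1(M)=H^3(M)=0$ make $\pi_*\colon H^3(X)\to H^2(M)$ injective with image $\ker(\cup e\colon H^2(M)\to H^4(M))\cong\mathbb Z^{b_2(M)-1}$. For $j=4$, $H^3(M)=0$ makes $\pi^*\colon H^4(M)\to H^4(X)$ surjective with kernel $\mathrm{im}(\cup e\colon H^2(M)\to H^4(M))=H^4(M)$, so $H^4(X)=0$. For $j=5$, $H^5(M)=H^6(M)=0$ give $H^5(X)\cong\ker(\cup e\colon H^4(M)\to H^6(M))=H^4(M)\cong\mathbb Z$. Thus $H^k(X)$ is free abelian of the stated rank for every $k$.

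Since $X$ is closed and oriented, Poincar\'e duality $H_k(X)\cong H^{5-k}(X)$ then transports these groups to the homology of $X$; in particular $H_*(X)$ is torsion free with the asserted ranks. (Equivalently, one may feed the freeness of $H^*(X)$ into the Universal Coefficient Theorem, so that all $\mathrm{Ext}$-terms vanish.) I do not expect a substantive obstacle in this argument — it is essentially bookkeeping with the Gysin sequence — but the two points that require care are (i) using ``$e$ indivisible'' in both senses, namely as a primitive group element (for $H^2(X)$) and, via Poincar\'e duality, as a class with $\cup e$ surjective onto $H^4(M)$ (for $H^3$ and $H^4$), and (ii) checking at each step that the relevant sub- or quotient group of the free group $H^2(M)$ is again free, which is automatic for subgroups and, for $H^2(M)/\mathbb Z e$, uses primitivity of $e$.
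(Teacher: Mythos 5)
Your argument is correct. It rests on the same two pillars as the paper's proof --- Lemma \ref{both simply} for the simple connectivity of $M$ and the indivisibility of $e$, and the Gysin sequence for the (co)homology of $X$ --- but the bookkeeping is organized differently. The paper only extracts from the Gysin sequence what it cannot get for free: it observes that $H^0(X),H^1(X),H^5(X)$ are automatically torsion free for an oriented $5$-manifold, deduces torsion-freeness of $H^3(X)$ from the injection $\pi_*\colon H^3(X)\hookrightarrow H^2(M)$, handles $H^2(X)$ and $H^4(X)$ by the Universal Coefficient Theorem and Poincar\'e duality on $X$ (their torsion equals that of $H_1(X)=0$), and then computes only $H^2(X)\cong H^2(M)/\mathbb{Z}e$ via Lemma \ref{pi* surj}, letting duality supply the remaining ranks. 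You instead compute every $H^j(X)$ explicitly from the Gysin sequence, which is more self-contained but requires the second incarnation of indivisibility, namely the surjectivity of $\cup\, e\colon H^2(M)\to H^4(M)$ (the Poincar\'e-dual reformulation noted after the definition of indivisibility) to identify $H^3(X)$ and to kill $H^4(X)$; the paper never needs this. Both uses of indivisibility that you isolate --- primitivity of $e$ in the free group $H^2(M)$, and surjectivity of $\cup\, e$ --- are legitimate and correctly deployed, and your final passage to homology via Poincar\'e duality (or the Universal Coefficient Theorem, since all cohomology is free) is fine. Your remark that $b_2(M)\geq 1$ is a worthwhile sanity check that the paper leaves implicit.
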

\begin{proof} We only have to prove that the cohomology of $X$ is torsion free and the formula for $H_2(X)$. The cohomology groups $H^0(X),H^1(X)$ and $H^5(X)$ are always torsion free for an oriented 5-manifold $X$. We have the following part of the Gysin sequence:
\begin{equation*}
\ldots\longrightarrow H^3(M)\stackrel{\pi^*}{\longrightarrow} H^3(X)\stackrel{\pi_*}{\longrightarrow}H^2(M) \longrightarrow\ldots
\end{equation*}
By assumption $H^3(M)=0$. Therefore the homomorphism $\pi_*$ injects $H^3(X)$ into $H^2(M)$, which is torsion free by the assumption that $M$ is simply-connected and the universal coefficient theorem. Hence $H^3(X)$ is torsion free itself. It remains to consider $H^2(X)$ and $H^4(X)$. By the universal coefficient theorem and Poincar\'e duality $H^2(X)$ is torsion free if and only if $H_1(X)$ is torsion free, if and only if $H^4(X)$ is torsion free. Since $H_1(X)=0$, we see that $H^2(X)$ and $H^4(X)$ are torsion free. 

By Lemma \ref{pi* surj} we have $H^2(X)\cong H^2(M)/\mathbb{Z} e$. The cohomology group $H^2(M)$ is torsion free by the universal coefficient theorem. Since the class $e$ is indivisible we have $H^2(M)/\mathbb{Z} e\cong \mathbb{Z}^{b_2(M)-1}$. This implies the formula for $H_2(X)\cong H_3(X)$.  
\qed\end{proof}

With Proposition \ref{5-mfd tor free H2} we get the following corollary (this has also been proved in \cite{DL}).  
\begin{cor}\label{simpl conn 5-dim circle bundle} Let $M$ be a simply-connected closed oriented 4-manifold and $X$ the circle bundle over $M$ with indivisible Euler class $e$. Then $X$ is diffeomorphic to
\begin{enumerate} 
\item $X=\#(b_2(M)-1)S^2\times S^3$ if $X$ is spin
\item $X=\#(b_2(M)-2)S^2\times S^3\#S^2\tilde{\times}S^3$ if $X$ is not spin.
\end{enumerate} 
The first case occurs if and only if $M$ is spin or $w_2(M)\equiv e \bmod 2$.  
\end{cor}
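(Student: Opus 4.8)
The strategy is simply to assemble this corollary from the structural results already proved in this section together with the diffeomorphism classification of Proposition \ref{5-mfd tor free H2}; there is no genuinely new argument to make, only a careful matching of invariants.

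First I would observe that by Lemma \ref{both simply} the hypotheses that $M$ be simply-connected and that $e$ be indivisible guarantee that the total space $X$ is simply-connected. Since $X$ is a circle bundle over the closed oriented 4-manifold $M$, it is itself a closed oriented 5-manifold, and Theorem \ref{class 4-5 circle bundles} then tells us that its integral homology is torsion free with $b_2(X)=b_2(M)-1$. Thus $X$ meets the hypotheses of Proposition \ref{5-mfd tor free H2}, and applying that proposition immediately yields the two cases: if $X$ is spin then $X\cong \#b_2(X)\,S^2\times S^3=\#(b_2(M)-1)\,S^2\times S^3$, which is case (a); and if $X$ is non-spin then $X\cong \#(b_2(X)-1)\,S^2\times S^3\# S^2\tilde{\times}S^3=\#(b_2(M)-2)\,S^2\times S^3\# S^2\tilde{\times}S^3$, which is case (b).

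Next, the criterion for which case occurs is exactly Lemma \ref{X is spin}: $X$ is spin if and only if $w_2(M)\equiv\alpha e$ mod $2$ for some $\alpha\in\{0,1\}$, that is, if and only if $M$ is spin or $w_2(M)\equiv e$ mod $2$. This gives the last sentence of the corollary directly. I would also record the consistency of the index in case (b): because $H_2(M)$ is free, an indivisible $e$ has non-zero mod $2$ reduction $\overline e\in H^2(M;\mathbb{Z}_2)$; and if $b_2(M)=1$ the intersection form of $M$ is $\pm(1)$, hence odd, so $w_2(M)$ is the unique non-zero element of $H^2(M;\mathbb{Z}_2)\cong\mathbb{Z}_2$, whence $w_2(M)\equiv e$ and Lemma \ref{X is spin} puts us in case (a) (indeed $X\cong S^5$, the empty connected sum). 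Therefore whenever $X$ is non-spin one automatically has $b_2(M)\ge 2$, so $b_2(M)-2\ge 0$ in (b).

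The only ``obstacle'' here is clerical rather than mathematical: keeping the shift $b_2(X)=b_2(M)-1$ straight, and verifying that the degenerate small-$b_2$ cases (in particular $b_2(M)=1$, giving $S^5$) are consistent with the statement. All the substantive content has already been established in Lemma \ref{both simply}, Theorem \ref{class 4-5 circle bundles}, Lemma \ref{X is spin}, and Proposition \ref{5-mfd tor free H2}.
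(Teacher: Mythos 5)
Your proposal is correct and follows exactly the route the paper intends: the paper derives this corollary in one line by combining Theorem \ref{class 4-5 circle bundles} (simple connectivity and torsion-free homology of $X$ with $b_2(X)=b_2(M)-1$) with Proposition \ref{5-mfd tor free H2}, and the spin criterion with Lemma \ref{X is spin}. Your additional check of the degenerate case $b_2(M)=1$ (forcing the odd form $\pm(1)$, hence $w_2(M)\equiv e$ and $X\cong S^5$) is a correct and welcome verification that case (b) never produces a negative index.
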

Since every closed oriented 4-manifold is $\mathrm{spin}^c$ and hence $w_2(M)$ is the mod 2 reduction of an (indivisible) integral class, it follows as a corollary that every closed simply-connected 4-manifold $M$ is diffeomorphic to the quotient of a free and smooth $S^1$-action on $\#(b_2(M)-1)S^2\times S^3$.

\section{The Boothby-Wang construction}\label{Boothby Wang construct section}

We want to construct circle bundles over symplectic manifolds $M$ whose Euler class is represented by the symplectic form. Since the Euler class is an element of the integral cohomology group $H^2(M)$, the symplectic form has to represent an integral cohomology class in $H^2(M;\mathbb{R})$, i.e.~it has to lie in the image of the natural homomorphism
\begin{equation*}
H^2(M)\longrightarrow H^2(M;\mathbb{R})\stackrel{\cong}\longrightarrow H^2_{DR}(M).
\end{equation*}
The existence of such a symplectic form is guaranteed by the following argument (this argument is from \cite[Observation 4.3]{Go}): Let $(M,\omega)$ be a closed symplectic manifold. For every Riemannian metric on $M$ there exists a small $\epsilon$-ball $B_\epsilon$ around the origin in the space of harmonic 2-forms on $M$ such that every element in $\omega+B_\epsilon$ is symplectic. Since the set of classes in $H^2(M;\mathbb{R})$ represented by these elements is open, there exists a symplectic form which represents a rational cohomology class. By multiplication with a suitable rational number we can find a symplectic form which represents an integral class. If we want, we can choose the number such that the class is indivisible. Note also that all symplectic forms in $\omega+B_\epsilon$ can be connected to $\omega$ by a smooth path of symplectic forms. This implies that they all have the same Chern classes as $\omega$. 

We fix the following data:
\begin{enumerate}
\item A closed connected symplectic manifold $(M^{2n},\omega)$ with symplectic form $\omega$, representing an integral cohomology class in $H^2(M;\mathbb{R})$.
\item An integral lift $[\omega]_\mathbb{Z}\in H^2(M)$ of $[\omega]\in H^2_{DR}(M)$. 
\end{enumerate}
Let $\pi\colon X\rightarrow M$ be the principal circle bundle over $M$ with Euler class $e(X)=[\omega]_\mathbb{Z}$. By a theorem of Kobayashi \cite{Kob} we can choose a $U(1)$-connection $A$ on $X\rightarrow M$ whose curvature form $F$ is equal to $\tfrac{2\pi}{i}\omega$. Then $A$ is a 1-form on $X$ with values in $\mathfrak{u}(1)\cong i\mathbb{R}$ which is invariant under the $S^1$-action and there are the following relations, coming from the definition of a connection on a principal bundle: 
\begin{align*}
dA & = \pi^*F \\
A(R) & =2\pi i.
\end{align*}
Here $R$ denotes the fundamental vector field generated by the action of the element $2\pi i\in \mathfrak{u}(1)$. An orbit of $R$, topologically a fibre of $X$, has period 1.
\begin{prop} Define the real valued 1-form $\lambda = \frac{1}{2\pi i}A$ on $X$. Then $\lambda$ is a contact form\index{Contact form|(} on $X$ with 
\begin{align*}
d\lambda &= -\pi^*\omega \\
\lambda(R) & =  1.
\end{align*}
\end{prop}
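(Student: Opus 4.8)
The plan is to read off the two displayed identities directly from the defining relations $dA=\pi^*F$, $A(R)=2\pi i$, $F=\tfrac{2\pi}{i}\omega$, and then to deduce the contact condition from the non-degeneracy of $\omega$.

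First I would compute $d\lambda$ and $\lambda(R)$. Since $d$ and $\pi^*$ are linear and $\lambda=\tfrac{1}{2\pi i}A$,
\[
d\lambda=\frac{1}{2\pi i}\,dA=\frac{1}{2\pi i}\,\pi^*F=\frac{1}{2\pi i}\cdot\frac{2\pi}{i}\,\pi^*\omega=\frac{1}{i^2}\,\pi^*\omega=-\pi^*\omega,
\]
and likewise $\lambda(R)=\tfrac{1}{2\pi i}A(R)=\tfrac{1}{2\pi i}\cdot 2\pi i=1$. (That $\lambda$ is real-valued is clear since $A$ takes values in $i\mathbb{R}$.) In particular $\lambda$ is nowhere vanishing, so $\ker\lambda$ is a hyperplane field on $X$.

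Next I would verify that $\lambda$ is contact, i.e.\ that $\lambda\wedge(d\lambda)^n$ is a volume form, where $\dim X=2n+1$. By the first step this is the same as showing that $\lambda\wedge\pi^*(\omega^n)$ vanishes nowhere. The clean way is to contract with the vertical vector field $R$: since $R$ is tangent to the fibres, $d\pi(R)=0$, hence $\iota_R\pi^*(\omega^n)=\pi^*\!\big(\iota_{d\pi(R)}\omega^n\big)=0$, and together with $\lambda(R)=1$ the Leibniz rule for $\iota_R$ gives
\[
\iota_R\big(\lambda\wedge\pi^*(\omega^n)\big)=\lambda(R)\,\pi^*(\omega^n)-\lambda\wedge\iota_R\pi^*(\omega^n)=\pi^*(\omega^n).
\]
Since $\omega$ is symplectic, $\omega^n$ is a volume form on $M$, so $\pi^*(\omega^n)$ is a nowhere-vanishing $2n$-form on $X$; therefore $\iota_R\big(\lambda\wedge\pi^*(\omega^n)\big)\neq0$ at every point, which forces $\lambda\wedge\pi^*(\omega^n)\neq0$ everywhere, and hence $\lambda\wedge(d\lambda)^n=(-1)^n\,\lambda\wedge\pi^*(\omega^n)$ is nowhere zero. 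Thus $\lambda$ is a contact form.

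All of this is routine; the only place calling for a little care is the non-degeneracy argument, where one must organise the computation so as to reduce ``$\lambda\wedge(d\lambda)^n\neq0$ on $X$'' to ``$\omega^n\neq0$ on $M$''. The contraction-with-$R$ trick above does this cleanly; an equivalent alternative is to argue pointwise, splitting $T_xX$ as $\mathbb{R}R_x\oplus\ker A_x$ with $d\pi$ restricting to an isomorphism $\ker A_x\xrightarrow{\ \cong\ }T_{\pi(x)}M$, and evaluating the top form on an adapted basis.
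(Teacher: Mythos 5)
Your proof is correct and follows essentially the same route as the paper: the two displayed identities are read off from $dA=\pi^*F$, $A(R)=2\pi i$, $F=\tfrac{2\pi}{i}\omega$ exactly as in the text, and the contact condition is reduced to the non-degeneracy of $\omega^n$. The only cosmetic difference is that you verify $\lambda\wedge(d\lambda)^n\neq 0$ by contracting with $R$, whereas the paper evaluates on an adapted basis $(R,v_1,\dotsc,v_{2n})$ with $v_i\in\ker\lambda$ --- precisely the ``equivalent alternative'' you mention at the end.
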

\begin{proof}
We have the relations
\begin{align*}
dA &=-2\pi i \pi^*\omega \\
A(R)&=2\pi i.
\end{align*}\index{Connection on principal $S^1$-bundle|)}
This implies the corresponding relations for $\lambda$. The tangent bundle of $X$ splits as $TX\cong\mathbb{R}\oplus\pi^*TM$, where the trivial $\mathbb{R}$-summand is spanned by the vector field $R$. Fix a point of $X$ and choose a basis $(R,v_1,\dotsc,v_{2n})$ of its tangent space, where the $v_i$ form an oriented basis of the kernel of $\lambda$. Then
\begin{align*}
\lambda\wedge(d\lambda)^n(R,v_1,\dotsc,v_{2n}) & =  (d\lambda)^n(v_1,\dotsc,v_{2n}) \\
&= (-1)^n \omega^n(\pi_*v_1,\dotsc,\pi_*v_{2n}) \\
&\neq  0.
\end{align*}  
Hence $\lambda\wedge(d\lambda)^n$ is a volume form on $X$ and $\lambda$ is contact. 
\qed\end{proof}
\begin{rem} If we define the orientation on $X$ via the splitting $TX\cong\mathbb{R}\oplus\pi^*TM$, where the trivial $\mathbb{R}$-summand is oriented by $R$ and $TM$ by $\omega$, then $\lambda$ is a positive contact form if $n$ is even and negative otherwise. In particular, in the construction for a symplectic 4-manifold $M$ we get a positive contact form.
\end{rem}
\begin{defn}\label{def boothby-wang} The contact structure\index{Contact structure} $\xi$ on the closed oriented manifold $X^{2n+1}$, defined by the contact form\index{Contact form} $\lambda$ above, is called the {\em Boothby-Wang contact structure}\index{Boothby-Wang construction} associated to the symplectic manifold $(M,\omega)$\index{Symplectic structure!on a manifold|)}. Since $d\lambda (R)=0$, the Reeb vector field\index{Reeb vector field} of $\lambda$ is given by the vector field $R$ along the fibres.
\end{defn}
For the original construction see \cite{BoWa}.
\begin{prop}\label{inv of BW contact up to isotopy} If $\lambda'$ is another contact form, defined by a different connection $A'$ as above, then the associated contact structure $\xi'$ is isotopic\index{Contact structure!isotopic} to $\xi$.
\end{prop}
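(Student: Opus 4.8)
The plan is to connect $\xi$ and $\xi'$ by an explicit affine path of contact structures coming from an affine path of connections, and then invoke Gray's theorem (equivalently, the definition of isotopic contact structures given above).

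First I would observe that, since $A$ and $A'$ are both $U(1)$-connections on the same principal bundle $\pi\colon X\to M$, their difference $A'-A$ is an $i\mathbb{R}$-valued $1$-form on $X$ which is $S^1$-invariant and vanishes on vertical tangent vectors (both connection forms take the value $2\pi i$ on the fundamental vector field $R$). Such a basic $1$-form is the pull-back of a unique $1$-form on the base, so $A'-A=\pi^*\beta$ for some $\beta\in\Omega^1(M;i\mathbb{R})$. By hypothesis both connections have curvature $\tfrac{2\pi}{i}\omega$, so $\pi^*d\beta=dA'-dA=0$, and since $\pi$ is a submersion this forces $d\beta=0$; thus $\beta$ is closed.

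Next I would interpolate. Put $A_t:=A+t\,\pi^*\beta$ for $t\in[0,1]$; being an affine combination of connections it is again a $U(1)$-connection, and because $\beta$ is closed its curvature is $dA_t=\pi^*F=\tfrac{2\pi}{i}\pi^*\omega$ for every $t$. Since $\pi^*\beta$ is horizontal we also have $A_t(R)=A(R)=2\pi i$. Setting $\lambda_t:=\tfrac{1}{2\pi i}A_t$, we obtain for each $t$ a real-valued $1$-form on $X$ with $d\lambda_t=-\pi^*\omega$ and $\lambda_t(R)=1$, reducing at the endpoints to the contact forms $\lambda_0=\lambda$ defining $\xi$ and $\lambda_1=\lambda'$ defining $\xi'$. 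I would then conclude by repeating verbatim the volume-form computation of the previous proposition --- which uses only the two relations $d\lambda_t=-\pi^*\omega$ and $\lambda_t(R)=1$ --- to see that $\lambda_t\wedge(d\lambda_t)^n$ is nowhere zero, so $\xi_t:=\ker\lambda_t$ is a contact structure for every $t$. As $\lambda_t$ depends smoothly (in fact affinely) on $t$, the family $(\xi_t)_{t\in[0,1]}$ is a smooth family of contact structures joining $\xi$ to $\xi'$, which is precisely what it means for $\xi$ and $\xi'$ to be isotopic; by Gray's theorem they are moreover carried into one another by an ambient isotopy.

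The only step that needs a little care is the identification of $A'-A$ with the pull-back of a \emph{closed} $1$-form on $M$; once that is in hand everything reduces to the endpoint bookkeeping above, so I do not expect a genuine obstacle.
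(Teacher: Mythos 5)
Your argument is correct and is essentially identical to the paper's own proof: both identify $A'-A$ as the pull-back $\pi^*\alpha$ of a closed $1$-form on $M$, interpolate affinely via $A_t=A+t\,\pi^*\alpha$, and observe that each $\lambda_t=\tfrac{1}{2\pi i}A_t$ satisfies the two relations $d\lambda_t=-\pi^*\omega$, $\lambda_t(R)=1$ that make the earlier volume-form computation go through. Your extra care in justifying why $A'-A$ is basic and why $\alpha$ is closed only makes explicit what the paper leaves implicit.
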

\begin{proof} The connection $A'$ is an $S^1$-invariant 1-form on $X$ with
\begin{align*}
dA'&=dA\\
A'(R)&=A(R).
\end{align*}
Hence $A'-A=\pi^*\alpha$ for some closed 1-form $\alpha$ on $M$. Define $A_t=A+\pi^*t\alpha$ for $t\in \mathbb{R}$. Then $A_t$ is a connection on $X$ with curvature $-2\pi i\omega$ for all $t$. Let $\lambda_t=\lambda+\pi^*(\frac{1}{2\pi i}t\alpha)$. Then $\lambda_t$ is a contact form on $X$ for all $t\in [0,1]$ with $\lambda_0=\lambda$ and $\lambda_1=\lambda'$. Therefore, $\xi$ and $\xi'$ are isotopic through the contact structures defined by $\lambda_t$. 
\qed\end{proof}
\index{Contact form|)}
The Chern classes of $\xi$ are given by the Chern classes associated to $\omega$ in the following way. 
\begin{lem}\label{Chern BW X} Let $X\rightarrow M$ be a Boothby-Wang fibration with contact structure $\xi$. Then $c_i(\xi)=\pi^*c_i(M,\omega)$ for all $i\geq 0$. The manifold $X$ is spin if and only if
\begin{equation*}
\text{$c_1(M,\omega)\equiv \alpha [\omega]_\mathbb{Z} \bmod 2$},
\end{equation*}
for some $\alpha\in\{0,1\}$. 
\end{lem}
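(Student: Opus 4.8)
The plan is to identify the contact distribution $\xi$ with the pull-back $\pi^*TM$ as a complex vector bundle, from which the Chern-class formula follows by naturality, and then to deduce the spin criterion from Lemma~\ref{X is spin} together with the fact that $c_1(M)\equiv w_2(M)$ mod $2$ on the symplectic (hence almost complex) $4$-manifold $M$.

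For the first statement I would argue as follows. Since $\lambda(R)=1$, the Reeb field $R$ spans a trivial line subbundle of $TX$ complementary to $\xi$, so $TX=\xi\oplus\mathbb{R}$ with the $\mathbb{R}$-summand spanned by $R$; on the other hand, as $\pi\colon X\to M$ is a principal $S^1$-bundle its vertical subbundle is trivialised by $R$, so also $TX=\pi^*TM\oplus\mathbb{R}$. Hence the differential $d\pi\colon TX\to TM$, whose kernel is exactly $\mathbb{R}R$, restricts to a vector-bundle isomorphism $\xi\to\pi^*TM$. Because $d\lambda=-\pi^*\omega$, this isomorphism is symplectic up to an overall sign, so that pulling back an $\omega$-compatible almost complex structure from $M$ (passing, if one wants strict compatibility with $(d\lambda)|_\xi$, to its conjugate) realises $\xi$ as $\pi^*TM$ as a complex vector bundle. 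By naturality of Chern classes under pull-back, $c_i(\xi)=\pi^*c_i(TM,\omega)$ for all $i\ge 0$. Only the divisibility and the mod $2$ reduction of $c_1$ enter the rest of the paper, so the sign that conjugation of the complex structure would introduce in odd degrees plays no role.

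For the spin statement, observe first that by construction of the Boothby-Wang bundle the Euler class of $X\to M$ is $e=[\omega]_\mathbb{Z}$, and that $M$, being symplectic, is almost complex, so $w_2(M)\equiv c_1(M)$ mod $2$. Lemma~\ref{X is spin} states that $X$ is spin if and only if $w_2(M)\equiv\alpha e$ mod $2$ for some $\alpha\in\{0,1\}$, which translates at once into $c_1(M)\equiv\alpha[\omega]_\mathbb{Z}$ mod $2$. If one wishes to avoid citing Lemma~\ref{X is spin}, the same follows directly: the first part together with Lemma~\ref{Contact spin} gives $w_2(X)\equiv c_1(\xi)\equiv\pi^*c_1(M)$ mod $2$, so $X$ is spin if and only if the mod $2$ reduction of $c_1(M)$ lies in the kernel of $\pi^*\colon H^2(M;\mathbb{Z}_2)\to H^2(X;\mathbb{Z}_2)$, and by the mod $2$ Gysin sequence $H^0(M;\mathbb{Z}_2)\stackrel{\cup\overline{e}}{\longrightarrow}H^2(M;\mathbb{Z}_2)\stackrel{\pi^*}{\longrightarrow}H^2(X;\mathbb{Z}_2)$ this kernel is $\{0,\overline{e}\}$.

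The only step with genuine content is the identification $\xi\cong\pi^*TM$ and the verification that the symplectic, hence complex, structures correspond under it; the spin statement is then a formal consequence of results already established in this section. I do not expect a serious obstacle, but one should be careful with orientation conventions and with the sign in $d\lambda=-\pi^*\omega$, since these affect $c_1(\xi)$ up to sign — an ambiguity that, as noted, does not affect the divisibility or the mod $2$ reduction which are all the paper uses.
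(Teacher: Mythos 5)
Your proof takes essentially the same route as the paper: the paper likewise identifies $\pi^*(TM,J)\cong(\xi,J')$ as complex vector bundles and invokes naturality of characteristic classes, then deduces the spin criterion from Lemma~\ref{X is spin} together with $c_1(M)\equiv w_2(M)$ mod $2$. Your added care about the sign coming from $d\lambda=-\pi^*\omega$ is a point the paper glosses over, but, as you observe, it affects at most the sign of the odd Chern classes and hence none of the divisibility or mod $2$ statements the lemma is used for.
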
 
\begin{proof} Let $J$ be a compatible almost complex structure\index{Almost complex structure!complex structure on vector bundle} for $\omega$ on $M$. Then there exists a compatible complex structure $J'$ for $\xi$ on $X$ such that $\pi^*(TM,J)\cong (\xi,J')$ as complex vector bundles. The naturality of characteristic classes proves the first claim. The second claim follows from Lemma \ref{X is spin} and $c_1(M,\omega)\equiv w_2(M) \bmod 2$. 
\qed\end{proof}

\section{The construction for symplectic 4-manifolds}\label{sect construct 4-mfds}
We fix the following data:
\begin{enumerate}
\item A closed simply-connected symplectic 4-manifold $(M,\omega)$ with symplectic form $\omega$, representing an integral cohomology class in $H^2(M;\mathbb{R})$, given by the argument at the beginning of Section \ref{Boothby Wang construct section}. Since $H^2(M)$ is torsion free by the universal coefficient theorem, the class $[\omega]$ has a unique integral lift, denoted by $[\omega]_\mathbb{Z}\in H^2(M)$. We sometimes denote the integral lift also by $[\omega]$ or $\omega$. We assume that $[\omega]_\mathbb{Z}$ is indivisible. 
\item Let $\pi\colon X\rightarrow M$ be the principal $S^1$-bundle over $M$ with Euler class $e(X)=[\omega]_\mathbb{Z}$. Then $X$ is a closed simply-connected oriented 5-manifold with torsion-free homology by Theorem \ref{class 4-5 circle bundles}.
\item Let $\lambda$ be a Boothby-Wang contact form on $X$ with associated contact structure $\xi$. By Proposition \ref{inv of BW contact up to isotopy}, the contact structure $\xi$ does not depend on $\lambda$ up to isotopy. 
\end{enumerate}

By Corollary \ref{simpl conn 5-dim circle bundle} the 5-manifold $X$ is diffeomorphic to
\begin{itemize} 
\item $\#(b_2(M)-1)S^2\times S^3$ if $X$ is spin
\item $\#(b_2(M)-2)S^2\times S^3\#S^2\tilde{\times}S^3$ if $X$ is not spin.
\end{itemize} 

Hence the same abstract closed simply-connected 5-manifold $X$ with torsion free homology can be realized in several different ways as a Boothby-Wang fibration over different simply-connected symplectic 4-manifolds $M$ and therefore admits many, possibly non-equivalent, contact structures.

\begin{defn} The canonical class of the symplectic structure $\omega$ is defined as 
\begin{equation*}
K=-c_1(M,\omega)\in H^2(M).
\end{equation*}
We denote by $d(K)\geq 0$ the divisibility of $K$ in the free abelian group $H^2(M)$. Similarly, we define $d(\xi)$ to be the divisibility of $c_1(\xi)$.
\end{defn}

Note that $X$ is spin if and only if $d(\xi)$ is even by Lemma \ref{Contact spin}. With Corollary \ref{Class 5-mfd almost cont} we get:
\begin{prop}\label{prop dxi equivalence} Suppose that $(M',\omega')$ is another closed simply-connected symplectic 4-manifold with integral and indivisible symplectic form $\omega'$. Denote the associated Boothby-Wang total space by $(X',\xi')$. 
\begin{enumerate}
\item The simply-connected 5-manifolds $X$ and $X'$ are diffeomorphic if and only if $b_2(M)=b_2(M')$ and $d(\xi)\equiv d(\xi') \bmod 2$. 
\item If $X$ and $X'$ are diffeomorphic and $d(\xi)=d(\xi')$, then $\xi$ and $\xi'$ are equivalent as almost contact structures.
\end{enumerate}
\end{prop}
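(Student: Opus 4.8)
The plan is to reduce everything to two results already established: the diffeomorphism classification of Corollary \ref{simpl conn 5-dim circle bundle} (equivalently Proposition \ref{5-mfd tor free H2}) and the classification of almost contact structures up to equivalence in Corollary \ref{Class 5-mfd almost cont}.

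For part (a) I would first record two facts. By Theorem \ref{class 4-5 circle bundles} we have $b_2(X)=b_2(M)-1$, so the second Betti numbers of $X$ and $X'$ agree if and only if $b_2(M)=b_2(M')$. Next, $X$ is spin if and only if $d(\xi)$ is even: by Lemma \ref{Contact spin} we have $c_1(\xi)\equiv w_2(X)$ mod $2$, and since $H^2(X)$ is torsion free (Theorem \ref{class 4-5 circle bundles}) an indivisible class has nonzero mod $2$ reduction, so the mod $2$ reduction of $c_1(\xi)$, which is $d(\xi)$ times an indivisible class, vanishes precisely when the integer $d(\xi)$ is even. The forward implication of (a) is then immediate: a diffeomorphism $X\to X'$ induces an isomorphism $H_2(X)\cong H_2(X')$, forcing $b_2(M)=b_2(M')$, and preserves $w_2$, so $X$ is spin if and only if $X'$ is spin, i.e. $d(\xi)\equiv d(\xi')$ mod $2$. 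Conversely, if $b_2(M)=b_2(M')$ and $d(\xi)\equiv d(\xi')$ mod $2$, then $X$ and $X'$ have the same second Betti number and the same spin type, so Corollary \ref{simpl conn 5-dim circle bundle} presents both of them as the identical connected sum of copies of $S^2\times S^3$, together with one $S^2\tilde{\times}S^3$ summand in the non-spin case; hence $X$ and $X'$ are diffeomorphic.

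For part (b), assume $X$ and $X'$ are diffeomorphic and $d(\xi)=d(\xi')$. By part (a) we may fix an orientation-preserving diffeomorphism $g\colon X\to X'$, since the diffeomorphisms produced by Corollary \ref{simpl conn 5-dim circle bundle} (via Theorem \ref{Barden}) preserve orientation. Then $g^*\xi'$ is an almost contact structure on $X$ with $c_1(g^*\xi')=g^*c_1(\xi')$, and since $g^*\colon H^2(X')\to H^2(X)$ is an isomorphism of free abelian groups it preserves divisibility; hence $c_1(g^*\xi')$ has the same divisibility $d(\xi')=d(\xi)$ as $c_1(\xi)$. By Corollary \ref{Class 5-mfd almost cont}, $\xi$ and $g^*\xi'$ are equivalent as almost contact structures on $X$, and composing this equivalence with $g$ shows that $\xi$ and $\xi'$ are equivalent.

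I do not expect any step to be a real obstacle. The only point that needs a moment's care is the equivalence ``$X$ spin $\iff d(\xi)$ even'', which relies on the torsion-freeness of $H^2(X)$ so that the single integer $d(\xi)$ determines the mod $2$ reduction of $w_2(X)$; everything else is bookkeeping with the two cited classification theorems.
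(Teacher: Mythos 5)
Your proof is correct and follows exactly the route the paper intends: the paper derives this proposition by remarking that $X$ is spin if and only if $d(\xi)$ is even (Lemma \ref{Contact spin}, plus torsion-freeness of $H^2(X)$, which you rightly flag) and then invoking Corollary \ref{simpl conn 5-dim circle bundle} for part (a) and Corollary \ref{Class 5-mfd almost cont} for part (b). Your write-up just makes explicit the bookkeeping the paper leaves to the reader.
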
  
The divisibility $d(\xi)$ can be calculated in the following way: By Lemma \ref{pi* surj} the bundle projection $\pi$ defines an isomorphism
\begin{equation*}
\pi^*\colon H^2(M)/\mathbb{Z}\omega\stackrel{\cong}{\longrightarrow}H^2(X),
\end{equation*}
and by Lemma \ref{Chern BW X} we have 
\begin{equation*}
\pi^*c_1(M)=c_1(\xi).
\end{equation*}
Let $[c_1(M)]$ denote the image of $c_1(M)$ in the quotient $H^2(M)/\mathbb{Z}\omega$, which is free abelian since $\omega$ is indivisible. We will use $\pi^*$ to identify 
\begin{align*}
H^2(X)&=H^2(M)/\mathbb{Z}\omega,\,\text{and}\\
c_1(\xi)&=[c_1(M)].
\end{align*}
Then $d(\xi)$ is also the divisibility of the class $[c_1(M)]$. If the second Betti number of $M$ is equal to $1$, then $H^2(X)=0$ and $d(\xi)=0$ trivially. For $b_2(M)>1$ we have:
\begin{lem}\label{lem calc dxi} The divisibility $d(\xi)$ is the maximal integer $d$ such that
\begin{equation*}
c_1(M)=dR+\gamma \omega
\end{equation*}
where $\gamma$ is some integer and $R\in H^2(M)$ not a multiple of $\omega$.
\end{lem}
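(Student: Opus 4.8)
The plan is to unravel the definition of divisibility in the quotient group $H^2(M)/\mathbb{Z}\omega$ and translate it into the stated form. Recall that we have identified $c_1(\xi) = [c_1(M)]$, the image of $c_1(M)$ in $Q := H^2(M)/\mathbb{Z}\omega$, and $d(\xi)$ is by definition the maximal integer $d$ such that $[c_1(M)] = d\cdot a$ for some $a \in Q$. Since $\omega$ is indivisible and $H^2(M)$ is torsion free, $Q$ is free abelian of rank $b_2(M)-1$, so this maximal $d$ is well-defined (and when $b_2(M)=1$ everything is zero, matching the statement). The key step is to show that $[c_1(M)]$ is divisible by $d$ in $Q$ if and only if $c_1(M) = dR + \gamma\omega$ in $H^2(M)$ for some $R \in H^2(M)$ and $\gamma \in \mathbb{Z}$, where $R$ is not a multiple of $\omega$; then taking the maximum over such $d$ on both sides yields the lemma.

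First I would prove the easy direction: if $c_1(M) = dR + \gamma\omega$, then passing to the quotient gives $[c_1(M)] = d[R]$, so $[c_1(M)]$ is divisible by $d$ in $Q$, hence $d \le d(\xi)$. For the condition "$R$ not a multiple of $\omega$": if $d < d(\xi)$ we can always arrange this trivially when $b_2(M)>1$ (the class $[R]$ is nonzero in $Q$ as long as $d > 0$ and $c_1(M)$ is nonzero mod $\omega$; in the degenerate case one checks directly), and in any case the point of the clause is to prevent cheating with $R$ a multiple of $\omega$, which would force $[c_1(M)] = 0$. Conversely, suppose $[c_1(M)] = d\cdot [R_0]$ for some class $[R_0] \in Q$; lift $[R_0]$ to some $R \in H^2(M)$. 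Then $c_1(M) - dR \in \mathbb{Z}\omega$, i.e. $c_1(M) - dR = \gamma\omega$ for some $\gamma \in \mathbb{Z}$, which is exactly $c_1(M) = dR + \gamma\omega$. If this chosen $R$ happened to be a multiple of $\omega$, then $[c_1(M)] = 0$ in $Q$, so $d(\xi) = 0$ and the statement holds trivially (the maximal $d$ in the lemma is also then the whole group's worth, i.e. one must interpret it as $0$ as well — but since $c_1(\xi)=0$, $d(\xi)=0$ by the convention that divisibility is zero exactly when the class is zero); otherwise $R$ is a legitimate witness. Taking $d = d(\xi)$ in this argument produces a decomposition realizing the maximum, and combined with the first direction this shows the maximal $d$ in the lemma's sense equals $d(\xi)$.

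The main obstacle, such as it is, is bookkeeping around the edge cases and the precise meaning of "$R$ not a multiple of $\omega$": one must make sure that the maximal $d$ witnessed by a decomposition with $R$ genuinely not a multiple of $\omega$ actually equals $d(\xi)$, rather than being spuriously inflated by choosing $R$ proportional to $\omega$ (which would make $dR + \gamma\omega$ a multiple of $\omega$ for every $d$, giving no information about divisibility in $Q$). This is handled by observing that $c_1(M) = dR + \gamma\omega$ with $R$ a multiple of $\omega$ forces $c_1(\xi) = [c_1(M)] = 0$, hence $d(\xi) = 0$, so this possibility never produces a larger value than the true divisibility. Modulo this remark the proof is a direct unwinding of definitions, using only that $Q$ is free abelian (Lemma \ref{pi* surj} and indivisibility of $\omega$) and the identification $c_1(\xi) = [c_1(M)]$ already established.
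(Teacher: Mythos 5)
Your argument is correct and is exactly the unwinding the paper intends: the lemma is stated without a separate proof, being an immediate consequence of the identification $c_1(\xi)=[c_1(M)]$ in the free abelian quotient $H^2(M)/\mathbb{Z}\omega$ set up just beforehand, and your translation of divisibility in that quotient into the decomposition $c_1(M)=dR+\gamma\omega$ (with the clause ``$R$ not a multiple of $\omega$'' handled via torsion-freeness of the quotient) is precisely that argument. Your treatment of the degenerate case $[c_1(M)]=0$ is also consistent with the paper's convention that divisibility is zero exactly for the zero class.
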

An important fact is the following:
\begin{lem}\label{dxi multiple of dK} The integer $d(\xi)$ is always a multiple of $d(K)$.
\end{lem}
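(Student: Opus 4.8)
The plan is to carry out the entire argument inside the free abelian group $H^2(M)/\mathbb{Z}\omega$, using the identifications $H^2(X)\cong H^2(M)/\mathbb{Z}\omega$ and $c_1(\xi)=[c_1(M)]$ produced by $\pi^*$ above. Since $K=-c_1(M)$, the class $c_1(\xi)$ is $-[K]$, so $d(\xi)$ equals the divisibility of $[K]$ in the quotient group. The quotient $H^2(M)/\mathbb{Z}\omega$ is indeed free abelian, because $\omega$ is indivisible in the free abelian group $H^2(M)$; hence "divisibility" makes sense there. The key elementary observation I would record first is that, for an element $x$ of a finitely generated free abelian group $A$ and an integer $m\geq 1$, one has $x\in mA$ if and only if $m$ divides $\mathrm{div}(x)$ (write $x$ in a basis: $x\in mA$ iff $m$ divides every coordinate of $x$, iff $m$ divides the gcd of the coordinates, which is $\mathrm{div}(x)$; here $\mathrm{div}(0)=0$ and everything divides $0$).

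With this in hand the proof is immediate. By definition of $d(K)$ we have $K\in d(K)\,H^2(M)$. Applying the quotient map $q\colon H^2(M)\to H^2(M)/\mathbb{Z}\omega$ gives $[K]=q(K)\in q\bigl(d(K)H^2(M)\bigr)=d(K)\bigl(H^2(M)/\mathbb{Z}\omega\bigr)$. By the observation above, applied in $A=H^2(M)/\mathbb{Z}\omega$, this means $d(K)$ divides $\mathrm{div}([K])=d(\xi)$, which is the claim. Equivalently: writing $K=d(K)\kappa$ with $\kappa\in H^2(M)$ primitive, one gets $[K]=d(K)[\kappa]$, and a class that is $d(K)$ times another class in a free abelian group has divisibility a multiple of $d(K)$.

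I do not expect any real obstacle here; the only points worth a line of care are that the quotient must be verified to be free abelian (which is exactly the standing hypothesis that $\omega$ is indivisible) and the degenerate situations — $b_2(M)=1$, or $\kappa$ a multiple of $\omega$ (forcing $\kappa=\pm\omega$ by primitivity, hence $[K]=0$) — in which $d(\xi)=0$; this is still a multiple of $d(K)$ under the convention $0=0\cdot d(K)$ already built into the notion of divisibility used in the paper. Note in passing that the inequality can be strict: $[\kappa]$ may be more divisible in $H^2(M)/\mathbb{Z}\omega$ than $\kappa$ is in $H^2(M)$, so $d(\xi)$ can be a proper multiple of $d(K)$.
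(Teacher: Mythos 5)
Your proof is correct and follows essentially the same route as the paper: write $K=d(K)\kappa$, pass to the quotient $H^2(M)/\mathbb{Z}\omega$ under $\pi^*$, and conclude that the divisibility of $[K]$ (which is $d(\xi)$) is a multiple of $d(K)$. The extra care you take (freeness of the quotient, the $x\in mA$ criterion, and the degenerate cases) only makes explicit what the paper's three-line proof leaves implicit.
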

\begin{proof} We can write  $c_1(M)=d(K)W$ where $W$ is a class in $H^2(M)$. Then $[c_1(M)]=d(K)[W]$ in $H^2(M)/\mathbb{Z}\omega$. Since $d(\xi)$ is the divisibility of $[c_1(M)]$, the integer $d(\xi)$ has to be a multiple of $d(K)$.
\qed\end{proof}
Hence the possible levels of Boothby-Wang contact structures are restricted to the multiples of the divisibility of the canonical class.

\section{Contact homology}\label{sect contact homology}
In this section we consider invariants derived from symplectic field theory, introduced in \cite{EGH}. We only take into account the {\em classical contact homology} $H_*^{cont}(X,\xi)$ which is a graded supercommutative algebra, defined using rational holomorphic curves with one positive puncture and several negative punctures in the symplectization of the contact manifold. We use a variant of this theory for the so-called Morse-Bott case, described in \cite{Bou} and in \cite[Section 2.9.2]{EGH}. 

In general, the classical contact homology is the homology of a certain freely generated graded supercommutative algebra $\mathfrak{A}$ with a differential $\partial\colon \mathfrak{A}\rightarrow\mathfrak{A}$ that satisfies a Leibniz rule so that the homology becomes itself an algebra. The generators of $\mathfrak{A}$ correspond to periodic Reeb orbits of the contact form and the differential is associated to moduli spaces of holomorphic curves in the symplectization of the contact manifold which are asymptotic to these Reeb orbits. The degree of the generators is related to the Conley-Zehnder index of the corresponding closed Reeb orbit. The algebra $\mathfrak{A}$ is actually a family of algebras parametrized by $t\in H^*(X;\mathbb{R})$. Since each element of the family is preserved by the differential, we get a family of contact homology algebras which can be specialized at any parameter $t$. The homology algebra is an invariant of the contact structure that, up to isomorphism, does not depend on the choice of contact form.

We now describe the setup in our situation. We are going to associate to each Boothby-Wang fibration $\pi\colon X\rightarrow M$ as in the previous section a graded commutative algebra $\mathfrak{A}(X,M)$. Choose a basis $B_1,\dotsc,B_N$ of $H_2(X)$ where $N=b_2(X)=b_2(M)-1$ and let
\begin{equation*}
A_n=\pi_*B_n\in H_2(M),\,\,1\leq n\leq N.
\end{equation*} 
Note that
\begin{equation*}
c_1(B_n)=\langle c_1(\xi),B_n\rangle=\langle c_1(M),A_n\rangle =c_1(A_n).
\end{equation*}
Choose a class $A_0\in H_2(M)$ such that
\begin{equation*}
\omega(A_0)=1.
\end{equation*}
This is possible, because $\omega$ was assumed indivisible. The classes $A_0,A_1,\dotsc,A_N$ form a basis of $H_2(M)$ by Lemma \ref{im proj H2}. We consider variables 
\begin{align*}
z&=(z_1,\dotsc,z_N),\,\text{and}\\
q&=\{q_{k,i}\}_{k\in \mathbb{N}, \,0\leq i\leq a},
\end{align*}
where $a=b_2(M)+1$ and $\mathbb{N}$ denotes the set of positive integers. They have degrees defined by
\begin{align*}
\text{deg}(z_n)&=-2c_1(B_n) \\
\text{deg}(q_{k,i})&=\text{deg}\,\Delta_i-2+2c_1(A_0)k,
\end{align*}
where $\text{deg}\,\Delta_i$ is given by
\begin{equation*}
\text{deg}\,\Delta_i=\left\{\begin{array}{ll} 0 &\text{if}\,\,i=0\\ 
2&\text{if}\,\, i=1,\dotsc,b_2(M)\\ 4 &\text{if}\,\, i=b_2(M)+1.\end{array}\right.
\end{equation*}
In our situation the degree of all variables is even (hence the algebra we are going to define is truly commutative, not only supercommutative).
\begin{defn} We define the following algebras:
\begin{itemize}
\item $\mathfrak{L}(X)$ = $\mathbb{C}[H_2(X;\mathbb{Z})]$ = the graded commutative ring of Laurent polynomials in the variables $z$ with coefficients in $\mathbb{C}$.
\item $\mathfrak{A}(X,M)$ = $\bigoplus_{d\in\mathbb{Z}}\mathfrak{A}_d(X,M)$ = the graded commutative algebra of polynomials in the variables $q$ with coefficients in $\mathfrak{L}(X)$. Here $\mathfrak{A}_d(X,M)$ denotes the set of homogeneous elements of degree $d$. The degree of a polynomial is calculated using the definitions above.
\end{itemize}
A {\em homomorphism} $\phi$ of graded commutative algebras $\mathfrak{A},\mathfrak{A}'$ over $\mathfrak{L}(X)$
\begin{equation*}
\phi\colon \mathfrak{A}=\bigoplus_{d\in\mathbb{Z}}\mathfrak{A}_d\longrightarrow \mathfrak{A}'=\bigoplus_{d\in\mathbb{Z}}\mathfrak{A}'_d
\end{equation*}
is a homomorphism of rings which is the identity on $\mathfrak{L}(X)$ and such that $\phi(\mathfrak{A}_d)\subset \mathfrak{A}'_d$ for all $d\in\mathbb{Z}$.
\end{defn}
\begin{lem}The following statements hold: 
\begin{enumerate}
\item Up to isomorphism, the ring $\mathfrak{L}(X)$ does not depend on the choice of basis $B_1,\dotsc,B_N$ for $H_2(X)$. 
\item For fixed $\mathfrak{L}(X)$, the algebra $\mathfrak{A}(X,M)$ does not depend, up to isomorphism over $\mathfrak{L}(X)$, on the choice of the class $A_0\in H_2(M)$ as above.
\end{enumerate}
\end{lem}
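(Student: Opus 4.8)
The plan is to prove the two statements independently, each by exhibiting an explicit graded isomorphism; neither requires anything beyond linear algebra over $\mathbb{Z}$ and careful bookkeeping of degrees.

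For (a), recall that $H_2(X)$ is free abelian of rank $N$ by Theorem \ref{class 4-5 circle bundles}. Two bases $B_1,\dotsc,B_N$ and $B_1',\dotsc,B_N'$ are therefore related by a matrix $(a_{nm})\in GL_N(\mathbb{Z})$ via $B_n'=\sum_m a_{nm}B_m$, and this induces the monomial substitution $z_n'\mapsto \prod_m z_m^{a_{nm}}$ between the two Laurent polynomial rings. I would check that this is a ring homomorphism whose inverse is given by the inverse matrix, hence an isomorphism, and that it is graded: since $c_1(\xi)$ is a homomorphism $H_2(X)\to\mathbb{Z}$, the image of $z_n'$ has degree $-2\sum_m a_{nm}c_1(B_m)=-2c_1(B_n')=\deg(z_n')$. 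Conceptually this merely records that $\mathfrak{L}(X)=\mathbb{C}[H_2(X)]$ is intrinsically the group ring of $H_2(X)$, graded by $-2\langle c_1(\xi),-\rangle$, with no choices involved.

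For (b), fix the basis $B_1,\dotsc,B_N$ (hence $\mathfrak{L}(X)$, the variables $z$, and the classes $A_n=\pi_*B_n$), and let $A_0,A_0'\in H_2(M)$ both satisfy $\omega(A_0)=\omega(A_0')=1$. Then $A_0'-A_0$ lies in the kernel of $\langle\omega,-\rangle=\langle e,-\rangle$, which by Lemma \ref{im proj H2} is exactly the image of $\pi_*\colon H_2(X)\to H_2(M)$; writing $A_0'-A_0=\pi_*\bigl(\sum_n m_n B_n\bigr)$ gives $A_0'=A_0+\sum_n m_n A_n$ and therefore $c_1(A_0')=c_1(A_0)+\sum_n m_n c_1(A_n)=c_1(A_0)+\sum_n m_n c_1(B_n)$, using $c_1(A_n)=c_1(B_n)$. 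Now set $u=\prod_n z_n^{m_n}$, a unit of $\mathfrak{L}(X)$ of degree $-2\sum_n m_n c_1(B_n)$, and define $\phi\colon\mathfrak{A}(X,M)_{A_0}\to\mathfrak{A}(X,M)_{A_0'}$ to be the $\mathfrak{L}(X)$-algebra homomorphism with $\phi(q_{k,i})=u^{k}q_{k,i}'$. The assignment $q_{k,i}'\mapsto u^{-k}q_{k,i}$ furnishes a two-sided inverse on generators, so $\phi$ is an isomorphism of $\mathfrak{L}(X)$-algebras, and it is graded because $\deg(u^k q_{k,i}')=-2k\sum_n m_n c_1(B_n)+\deg\Delta_i-2+2c_1(A_0')k=\deg\Delta_i-2+2c_1(A_0)k=\deg(q_{k,i})$.

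I do not expect a serious obstacle. The one point needing care is the identification in (b) of the indeterminacy of $A_0$ with the image of $\pi_*$, since this is exactly what guarantees that the correcting factor $u$ lies in $\mathfrak{L}(X)=\mathbb{C}[H_2(X)]$ and not in some larger ring; this is precisely where Lemma \ref{im proj H2} (together with $e=[\omega]_\mathbb{Z}$) enters. Once that is in place, the degree verification must be carried out with the sign conventions $\deg(z_n)=-2c_1(B_n)$ and $\deg(q_{k,i})=\deg\Delta_i-2+2c_1(A_0)k$ fixed above, but it is a routine calculation.
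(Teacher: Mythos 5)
Your proof is correct and takes essentially the same approach as the paper: a monomial substitution $z'_n\mapsto\prod_m z_m^{a_{nm}}$ induced by the change of basis for (a), and a correction of each $q_{k,i}$ by the $k$-th power of a unit $\prod_n z_n^{m_n}$ of $\mathfrak{L}(X)$ for (b), with the same degree bookkeeping. The only (harmless) difference is that you justify $A_0'-A_0\in\langle A_1,\dotsc,A_N\rangle$ via Lemma \ref{im proj H2}, where the paper invokes the fact that $A_0,A_1,\dotsc,A_N$ form a basis of $H_2(M)$; these are the same underlying observation.
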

\begin{proof} Let $\overline{B}_1,\dotsc,\overline{B}_N$ be another basis of $H_2(X)$ and $\overline{\mathfrak{L}(X)}$ the associated ring, generated by variables $\overline{z}$. Then there exists a matrix
\begin{equation*}
(\beta_{mn})\in GL(N,\mathbb{Z}),
\end{equation*}
with $1\leq m,n\leq N$, such that
\begin{equation*}
\overline{B}_m=\sum_{n=1}^N\beta_{mn}B_n.
\end{equation*}
Define a homomorphism $\phi\colon \overline{\mathfrak{L}(X)}\rightarrow\mathfrak{L}(X)$ via
\begin{equation*}
\overline{z}_m\mapsto\prod_{n=1}^N z_n^{\beta_{mn}},
\end{equation*}
for all $1\leq m\leq N$. Then $\phi$ preserves degrees and is an isomorphism, since the matrix $(\beta_{mn})$ is invertible.

Let $\overline{A}_0$ be another element in $H_2(M)$ such that $\omega(\overline{A}_0)=1$ and $\overline{\mathfrak{A}(X,M)}$ the associated algebra, generated by variables $\overline{q}$. Then there exists a vector
\begin{equation*}
(\alpha_n)\in \mathbb{Z}^N,
\end{equation*}
with $1\leq n\leq N$, such that
\begin{equation*}
\overline{A}_0=A_0+\sum_{n=1}^N\alpha_n A_n.
\end{equation*}
Define a homomorphism $\psi\colon \overline{\mathfrak{A}(X,M)}\rightarrow \mathfrak{A}(X,M)$ via
\begin{equation*}
\overline{q}_{k,i}\mapsto q_{k,i} \prod_{n=1}^N z_n^{-k\alpha_n},\quad k\in \mathbb{N}, \,0\leq i\leq a,
\end{equation*}
and which is the identity on $\mathfrak{L}(X)$. Then $\psi$ preserves degrees and is invertible.
\qed\end{proof}
\begin{defn} We choose a class $A_0\in H_2(M)$ with $\omega(A_0)=1$ and denote $c_1(A_0)$ by $\Delta$. Hence the degrees of the variables $q_{k,i}$ are equal to
\begin{equation*}
\text{deg}(q_{k,i}) =\text{deg}\,\Delta_i-2+2\Delta k.
\end{equation*}
\end{defn}
The integer $\Delta$ has the following properties.
\begin{lem}\label{deltagamma} The following relations hold:
\begin{enumerate} 
\item Let $c_1(M)=d(\xi)R+\gamma \omega$ for some class $R\in H^2(M)$ and integer $\gamma\in \mathbb{Z}$ as in Lemma \ref{lem calc dxi}. Then $\Delta \equiv\gamma \bmod d(\xi)$.
\item The greatest common divisor $\gcd(\Delta,d(\xi))$ is equal to $d(K)$. In particular, if $d(\xi)=0$, then $\Delta=d(K)$.\footnote{We use the convention that $\gcd(0,0)=0$.}
\end{enumerate}
\end{lem}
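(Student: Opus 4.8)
The plan is to deduce the lemma from the characterization of $d(\xi)$ in Lemma \ref{lem calc dxi}, the divisibility statement of Lemma \ref{dxi multiple of dK}, and a short arithmetic computation that exploits the normalization $\omega(A_0)=1$. Throughout I would use that $\Delta=c_1(A_0)=\langle c_1(M),A_0\rangle$ and that $H^2(M)$ is torsion free, so that divisibility there behaves as in a free abelian group.

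For part (1) I would simply pair the relation $c_1(M)=d(\xi)R+\gamma\omega$ with the class $A_0\in H_2(M)$. Since $\langle\omega,A_0\rangle=\omega(A_0)=1$, this gives $\Delta=d(\xi)\,\langle R,A_0\rangle+\gamma$, hence $\Delta\equiv\gamma$ mod $d(\xi)$. (When $d(\xi)=0$ this simply reads $\Delta=\gamma$.)

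For part (2), set $g=\gcd(\Delta,d(\xi))$. One divisibility is immediate: writing $c_1(M)=d(K)W$ with $W\in H^2(M)$, evaluation on $A_0$ gives $d(K)\mid\Delta$, while $d(K)\mid d(\xi)$ by Lemma \ref{dxi multiple of dK}; hence $d(K)\mid g$. For the reverse divisibility I would use part (1) to write $\gamma=\Delta-m\,d(\xi)$ for some $m\in\mathbb{Z}$ and substitute back into the relation of Lemma \ref{lem calc dxi}, obtaining
\[
c_1(M)=d(\xi)\,(R-m\omega)+\Delta\,\omega .
\]
Since $g$ divides both $d(\xi)$ and $\Delta$, the right-hand side is $g$ times a class in $H^2(M)$, so $c_1(M)$ is divisible by $g$; as $H^2(M)$ is free abelian and $c_1(M)=d(K)W$, this forces $g\mid d(K)$. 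Combined with $d(K)\mid g$ and the fact that both are non-negative integers, this gives $g=d(K)$. For the last assertion, when $d(\xi)=0$ Lemma \ref{lem calc dxi} yields $c_1(M)=\gamma\omega$, so $\Delta=\gamma$, and since $\omega$ is indivisible $d(K)=|\gamma|=|\Delta|=\gcd(\Delta,0)$.

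I do not expect a genuine obstacle: the argument is elementary once Lemmas \ref{lem calc dxi} and \ref{dxi multiple of dK} are available. The only points needing care are the bookkeeping behind the identification $H^2(X)\cong H^2(M)/\mathbb{Z}\omega$ that underlies Lemma \ref{lem calc dxi}, and the degenerate case $d(\xi)=0$, where $\gcd(\,\cdot\,,0)$ must be interpreted as the absolute value.
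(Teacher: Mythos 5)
Your proposal is correct and follows essentially the same route as the paper: part (a) by evaluating $c_1(M)=d(\xi)R+\gamma\omega$ on $A_0$, and part (b) by first noting $d(K)$ divides both $\Delta$ and $d(\xi)$ and then reversing the divisibility. The only cosmetic difference is in that reverse step, where the paper evaluates $c_1(M)$ on a class $B$ with $c_1(M)(B)=d(K)$ to obtain a Bezout identity $d(K)=xd(\xi)+y\Delta$, while you substitute $\gamma=\Delta-m\,d(\xi)$ back into the relation to exhibit $c_1(M)$ as $\gcd(\Delta,d(\xi))$ times a cohomology class; both are one-line arithmetic, and your remark about interpreting $\gcd(\Delta,0)$ as $\lvert\Delta\rvert$ is, if anything, slightly more careful than the paper.
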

\begin{proof} Part (a) follows if we evaluate both sides on $A_0$. To prove part (b), the integer $d(K)$ divides $d(\xi)$ by Lemma \ref{dxi multiple of dK} and it divides $c_1(M)$, hence also $\Delta$. On the other hand, there exists a homology class $B\in H_2(M)$ such that $d(K)= c_1(M)(B)$. By part (a)
\begin{equation*}
d(K)=d(\xi)R(B)+\gamma\omega(B)
\end{equation*}
and $\gamma\equiv \Delta \bmod d(\xi)$. Hence there exist integers $x,y\in\mathbb{Z}$ such that $d(K)=xd(\xi)+y\Delta$. This proves the claim.
\qed\end{proof}

We are interested in the algebra $\mathfrak{A}(X,M)$ because of the following result, described in \cite[Proposition 2.9.1]{EGH}: 
\begin{thm}\label{conhom} For a Boothby-Wang fibration $X\rightarrow M$ as above, the Morse-Bott contact homology $H_*^{cont}(X,\xi)$ specialized at $t=0$ is isomorphic to $\mathfrak{A}(X,M)$.
\end{thm}
If two Boothby-Wang contact structures $\xi$ and $\xi'$ on $X$ are equivalent, then their contact homologies are isomorphic. We now make the following assumptions:
\begin{enumerate}
\item The simply-connected 5-manifold $X$ can be realized as the Boothby-Wang total space over another closed simply-connected symplectic 4-manifold $(M',\omega')$, where $\omega'$ represents an integral and indivisible class. This implies in particular that $b_2(M')=b_2(M)$ and both are equal to $a-1$. Denote the canonical class of $(M',\omega')$ by $K'$ and its divisibility by $d(K')$
\item We assume that $\xi$ and $\xi'$ are contact structures on the same level and therefore both are equivalent as almost contact structures. We set $d=d(\xi')=d(\xi)$. 
\item Let $\mathfrak{A}(X,M')$ denote the associated algebra over $\mathfrak{L}(X)$, generated by variables $\{q'_{l,j}\}$, with $l\in\mathbb{N}, 0\leq j\leq a$. We set $\Delta'=c_1(A_0')$ for a class $A_0'$ with $\omega'(A_0')=1$. 
\end{enumerate}
The following is the main theorem in this section:
\begin{thm}\label{isom cont hom} The algebras $\mathfrak{A}(X,M)$ and $\mathfrak{A}(X,M')$ are isomorphic over $\mathfrak{L}(X)$ if and only if one of the following three conditions is satisfied:
\begin{itemize}
\item $d\geq 1$ and both $d(K),d(K')\leq 3$
\item $d=0$ and $d(K)=d(K')$
\item $d\geq 4$ and $d(K)=d(K')\geq 4$.
\end{itemize} 
\end{thm}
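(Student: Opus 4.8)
The plan is to analyze the graded commutative algebra $\mathfrak{A}(X,M)$ purely as an abstract object over $\mathfrak{L}(X)$, extract from it two pieces of data — the degrees of the generators $q_{k,i}$ and the $\mathfrak{L}(X)$-module in which they live — and show that the only invariant of the isomorphism class that survives is the set of degrees $\{\deg(q_{k,i})\}$, counted with the multiplicities of the four families $\Delta_i\in\{0,2,2,\dots,2,4\}$. Since $\deg(q_{k,i}) = \deg\Delta_i - 2 + 2\Delta k$, the whole degree spectrum is governed by the single integer $\Delta$ together with $d$ (which enters only through the identification $H^2(X)=H^2(M)/\mathbb Z\omega$ and the grading on $\mathfrak{L}(X)$). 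So the first step is to reduce the isomorphism question to a question about the integers $\Delta,\Delta',d$: I would argue that an $\mathfrak{L}(X)$-algebra isomorphism $\mathfrak{A}(X,M)\to\mathfrak{A}(X,M')$ must carry the minimal generators in each graded piece to minimal generators, and that after composing with the automorphisms classified in the preceding lemma (which change $A_0$, hence shift $\Delta$ by an integer combination of the $c_1(A_n)$, and change the basis of $H_2(X)$) one is left comparing the multiset of degrees only.

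The second step is the arithmetic heart. By Lemma \ref{deltagamma}, $\Delta$ is well-defined modulo $d=d(\xi)$ only up to the ambiguity coming from the choice of $A_0$, and $\gcd(\Delta,d)=d(K)$; likewise $\gcd(\Delta',d)=d(K')$. When $d\geq 1$, the grading on $\mathfrak{L}(X)$ is by the values $c_1(B_n)$, all of which are divisible by... here one must be careful: the relevant normalization is that $\deg(z_n)=-2c_1(B_n)$ and these generate, as a grading group, $2d\mathbb{Z}$ after accounting for the quotient $H^2(M)/\mathbb Z\omega$. Thus $\Delta$ matters only modulo $d$, and then only up to sign and the $\mathrm{GL}$-action, so the genuine invariant is $d(K)=\gcd(\Delta,d)$ — but \emph{only when $d(K)$ is large enough that the families $\Delta_i=0,2,4$ stay separated in the degree spectrum}. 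Concretely, $\deg(q_{k,0})$, $\deg(q_{k,1})$, $\deg(q_{k,b_2(M)+1})$ are $2\Delta k-2$, $2\Delta k$, $2\Delta k+2$; consecutive values of $k$ differ by $2\Delta$, and two different families collide precisely when $2\Delta \le 4$, i.e. when $\Delta\le 2$. Working modulo $d$: the families remain separated iff $d(K)=\gcd(\Delta,d)\geq 4$ (or $d=0$, where $\Delta=d(K)$ exactly and no reduction occurs). This is exactly where the three cases in the statement come from: for $d\geq 4$ and $d(K)\geq 4$ the spectrum recovers $d(K)$ and nothing else, forcing $d(K)=d(K')$; for $d(K)\le 3$ the collisions wash out the distinction, so all such algebras are isomorphic to each other (and one checks $d(K')\le 3$ is forced by the same collision pattern being visible vs. not); for $d=0$ there is no modular reduction and $d(K)=\Delta$ is a naked invariant.

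For the converse direction — \emph{constructing} the isomorphisms when the numerical condition holds — I would exhibit explicit degree-preserving $\mathfrak{L}(X)$-algebra maps sending $q_{k,i}\mapsto q'_{l,j}\prod z_n^{e_n}$ (the monomial factor absorbing the difference between $\Delta$ and $\Delta'$, using exactly the mechanism of the $\psi$ in the preceding lemma), matching up generators of equal degree; the point is that in the collision regime $d(K),d(K')\le 3$ the degree multisets literally coincide regardless of the precise value of $\Delta$ versus $\Delta'$, so the matching exists. I expect the main obstacle to be the bookkeeping in the reduction step: pinning down precisely which data of $\mathfrak{A}(X,M)$ is isomorphism-invariant (one must rule out that some clever non-monomial, non-degree-respecting-looking map could mix generators, by using that the $q$'s are algebraically independent over $\mathfrak{L}(X)$ and that $\mathfrak{L}(X)$ is the subring of invertible-plus-constants elements, so it is canonically determined), and then the careful case analysis of when the three degree-families $\{0,2,4\}+2\Delta k \pmod{\text{grading of }\mathfrak{L}(X)}$ overlap. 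Once that combinatorial picture is clear, the three cases fall out and the explicit isomorphisms are routine.
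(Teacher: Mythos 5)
Your proposal follows essentially the same route as the paper: the invariant you extract (the set of generator degrees modulo $2d$, governed by $\gcd(\Delta,d)=d(K)$ together with the offsets $\deg\Delta_i-2\in\{-2,0,2\}$, with the dichotomy $d(K)\leq 3$ versus $d(K)\geq 4$ coming from whether these offsets cover every residue class) is exactly the paper's sets $Q_b$ of Lemma \ref{Qb empty infinite}, your reduction to the quotient polynomial algebra with its $\mathbb{Z}_{2d}$-grading and the linear-part argument for generators is the paper's $\mathfrak{B}(X,M)$ and Lemma \ref{Lemma if algebras isomorphic then dK=dK'}, and your explicit isomorphisms twisting by monomials in the $z_n$ are those of Lemma \ref{Lemma dK=dK' implies algebras isomorphic}. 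The only quibble is the transient claim that families ``collide precisely when $\Delta\leq 2$'' (the correct criterion, which you reach in the next clause, is coverage of all residues mod $d$, i.e.\ $\gcd(\Delta,d)\leq 3$); otherwise the plan matches the paper's proof.
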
 
This shows that the isomorphism type of the contact homology for Boothby-Wang contact structures on the same level is strongly related to the divisibility of the canonical class of the symplectic structure. The proof of this theorem is done in several steps.

\begin{defn} Suppose that $d\geq 1$. For $0\leq b< d$ denote by $Q_b$ the set of generators $\{q_{k,i}\}$ with
\begin{equation*}
\text{$\text{deg}(q_{k,i})\equiv 2b \bmod 2d$}.
\end{equation*}
The set of all generators is the disjoint union of the sets $Q_b$. Similarly denote by $Q'_b$ the set of generators $\{q'_{l,j}\}$ with
\begin{equation*}
\text{$\text{deg}(q'_{l,j})\equiv 2b \bmod 2d$}.
\end{equation*}
\end{defn}
The following lemma shows that there is a relation between the cardinality of the set $Q_b$ of generators and the divisibility of the canonical class of the symplectic structure.   
\begin{lem} \label{Qb empty infinite} Assume that $d\geq 1$. Then the set $Q_b$ is infinite if $d(K)$ divides one of the integers $b-1,b,b+1$ and empty otherwise.
\end{lem}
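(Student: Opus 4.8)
The plan is to translate the condition that $q_{k,i}\in Q_b$ into a linear congruence in the unknown $k$, and then combine elementary number theory with Lemma~\ref{deltagamma}(b).

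First I would unwind the definitions. Saying $q_{k,i}\in Q_b$ means $\deg(q_{k,i})\equiv 2b\pmod{2d}$, and since $\deg(q_{k,i})=\deg\Delta_i-2+2\Delta k$ with $\deg\Delta_i\in\{0,2,4\}$, this splits into three cases according to $i$: for $i=0$ it becomes $\Delta k\equiv b+1\pmod d$; for $1\le i\le b_2(M)$ it becomes $\Delta k\equiv b\pmod d$; and for $i=b_2(M)+1$ it becomes $\Delta k\equiv b-1\pmod d$. (Halving the mod-$2d$ congruences is legitimate because both sides are even.) Note also that $b_2(M)\ge 1$, since otherwise $H^2(X)=0$ and $d=d(\xi)=0$, contrary to the hypothesis $d\ge 1$; hence each of the three target residues $b-1,b,b+1$ is realised by at least one admissible index $i$. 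Therefore $Q_b$ is nonempty if and only if at least one of the three congruences $\Delta k\equiv b-1$, $\Delta k\equiv b$, $\Delta k\equiv b+1\pmod d$ admits a solution $k\in\mathbb{N}$.

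Next I would invoke the number-theoretic input: a congruence $\Delta k\equiv c\pmod d$ is solvable in $\mathbb{Z}$ if and only if $\gcd(\Delta,d)$ divides $c$, and $\gcd(\Delta,d)=d(K)$ by Lemma~\ref{deltagamma}(b). Moreover, if $k_0$ is a solution then so is $k_0+md$ for every $m\in\mathbb{Z}$, so one may push $k$ into $\mathbb{N}$ and in fact obtain infinitely many solutions $k\in\mathbb{N}$; for each such $k$ the corresponding generator $q_{k,i}$ lies in $Q_b$, and distinct values of $k$ give distinct generators. Hence $Q_b$ is either empty or infinite, and it is nonempty precisely when $d(K)$ divides one of $b-1,b,b+1$, which is the claim.

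The only point needing a little care is the passage from solvability over $\mathbb{Z}$ to infinitely many solutions over $\mathbb{N}$, which follows because the solutions of a linear congruence modulo $d$ form a union of residue classes modulo $d$; beyond that the argument is just bookkeeping of the three cases together with an application of Lemma~\ref{deltagamma}(b), so I do not anticipate a genuine obstacle here.
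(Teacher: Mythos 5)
Your proof is correct and follows essentially the same route as the paper's: both translate membership of $q_{k,i}$ in $Q_b$ into the solvability of $\Delta k\equiv b+\epsilon \pmod d$ for $\epsilon\in\{-1,0,1\}$ and then apply $\gcd(\Delta,d)=d(K)$ from Lemma \ref{deltagamma}(b), with infinitude coming from the full residue class of solutions $k\in\mathbb{N}$. Your explicit check that each of the three target residues is realised by some admissible index $i$ is a small point the paper leaves implicit, but the argument is otherwise identical.
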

\begin{proof} Suppose $d(K)=\gcd(\Delta,d)$ divides one of the integers $b+\epsilon$, with $\epsilon\in\{-1,0,1\}$. Then the equation
\begin{equation*}
b=-\epsilon+\Delta k+d\alpha
\end{equation*}
has infinitely many solutions $k\geq 1$ with $\alpha\in\mathbb{Z}$. Choose an integer $0\leq i\leq a$ with $\text{deg}\,\Delta_i-2=-2\epsilon$. Then
\begin{equation*}
\text{$\text{deg}(q_{k,i})=-2\epsilon+2\Delta k\equiv 2b \bmod 2d$}
\end{equation*}
for infinitely many $k\geq 1$. Hence these $q_{k,i}$ are all in $Q_b$. 

Conversely, suppose that $d(K)$ does not divide any of the integers $b+\epsilon$, with $\epsilon\in\{-1,0,1\}$. Suppose that $Q_b$ contains an element $q_{l,j}$. We have $\text{deg}(q_{l,j})=-2\epsilon+2\Delta l$ for some $\epsilon\in\{-1,0,1\}$. By assumption,
\begin{equation*}
\text{deg}(q_{l,j})=-2\epsilon+2\Delta l=2b-2d\alpha,
\end{equation*}
for some $\alpha\in\mathbb{Z}$. This implies
\begin{equation*}
b+\epsilon=\Delta l+d\alpha.
\end{equation*}
This is impossible, since $d(K)$ divides the right side, but not the left side.
\qed\end{proof}
\begin{ex}\label{example dK leq 123 then all Qb infinite} Suppose that $d\geq 1$. If $d(K)\in \{1,2,3\}$, then Lemma \ref{Qb empty infinite} implies that $Q_b$ is infinite for all $b=0,\dotsc, d-1$. If $d(K)\geq 4$ (and hence $d\geq 4$ as well), then at least one of the $Q_b$ is empty, e.g.~$Q_2$ is always empty in this case.
\end{ex}

Lemma \ref{Qb empty infinite} implies the following relation between the cardinalities of the set of generators $Q_b$ and $Q'_b$ and the divisibilities of the canonical classes of the symplectic 4-manifolds $M$ and $M'$.  
\begin{lem}\label{Qb and Q'b} Assume that $d\geq 4$ and at least one of the numbers $d(K),d(K')$ is $\geq 4$. Then the following two statements are equivalent:
\begin{enumerate}
\item There exists an integer $0\leq b<d$ such that $Q_b$ and $Q'_b$ do not have the same cardinality (i.e. one of them is empty and the other infinite).
\item $d(K)\neq d(K')$.
\end{enumerate}
\end{lem}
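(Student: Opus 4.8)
The plan is to reduce the statement to a counting argument built on Lemma~\ref{Qb empty infinite}. First I would record the standing constraints: since $d\geq 4$ we have $d(\xi)=d\neq 0$, so $c_1(M)\neq 0$ and hence $d(K)\geq 1$, and likewise $d(K')\geq 1$; moreover by Lemma~\ref{dxi multiple of dK} both $d(K)$ and $d(K')$ divide $d$. Write $m=d(K)$ and $m'=d(K')$. Next I would restate the infiniteness criterion of Lemma~\ref{Qb empty infinite} in the form: $Q_b$ is infinite precisely when the residue of $b$ modulo $m$ lies in $\{0,1,m-1\}$, and empty otherwise, and the same with $m'$ for $Q'_b$. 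Thus $Q_b$ and $Q'_b$ fail to have the same cardinality for some $b$ exactly when the two conditions ``$m$ divides one of $b-1,b,b+1$'' and ``$m'$ divides one of $b-1,b,b+1$'' do not always agree, which is what statement (a) expresses.

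The implication from $d(K)=d(K')$ to equal cardinalities of all $Q_b,Q'_b$ is immediate, since in that case the criterion of Lemma~\ref{Qb empty infinite} becomes literally the same condition on $b$ for both; this settles the direction (b)$\Rightarrow$(a) (in contrapositive form). So the real content is the converse: assuming $Q_b$ and $Q'_b$ have equal cardinality for every $0\leq b<d$, deduce $m=m'$.

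For this I would count the number of $b\in\{0,\dots,d-1\}$ with $Q_b$ infinite. By hypothesis at least one of $m,m'$ is $\geq 4$, say $m\geq 4$; the other case is symmetric because the equal-cardinality condition is symmetric in $M$ and $M'$. Since $m\mid d$, each residue class modulo $m$ is hit exactly $d/m$ times by $\{0,\dots,d-1\}$, and for $m\geq 4$ the residues $0,1,m-1$ are three \emph{distinct} elements of $\mathbb{Z}/m$, so the number of $b$ with $Q_b$ infinite equals $3d/m<d$. On the other side: if $m'\leq 3$ then by Example~\ref{example dK leq 123 then all Qb infinite} every $Q'_b$ is infinite, so this count is $d$; if $m'\geq 4$ it is $3d/m'$ by the same reasoning. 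Matching these counts, which is forced by the equal-cardinality hypothesis, rules out $m'\leq 3$ (it would give $3d/m=d$, i.e.\ $m=3$, contradicting $m\geq 4$), and then $3d/m=3d/m'$ yields $m=m'$.

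The only delicate point — the ``main obstacle'', such as it is — is making sure the hypothesis $m\geq 4$ (rather than merely $m\geq 1$) is genuinely used: it is exactly what makes $\{0,1,m-1\}$ a proper subset of $\mathbb{Z}/m$, so that the count $3d/m$ is strictly less than $d$ and the argument can distinguish the ``small divisor'' regime from the case $m=m'$. One should also note explicitly that when it is $d(K')$ rather than $d(K)$ that is assumed $\geq 4$, one simply runs the identical count with the roles of $M$ and $M'$ exchanged.
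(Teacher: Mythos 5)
Your argument is correct, but it reaches the hard implication by a different mechanism than the paper. The paper's proof is constructive: assuming without loss of generality $d(K)<d(K')$, it exhibits an explicit distinguishing value of $b$ (namely $b=2$ when $d(K)\leq 3$, and $b=d(K)-1$ when $d(K)\geq 4$, in which case $d(K)$ divides $b+1$ while $d(K')>d(K)$ cannot divide any of $b-1,b,b+1$). You instead prove the contrapositive by an aggregate count: using Lemma~\ref{Qb empty infinite} together with the divisibility $d(K)\mid d$ and $d(K')\mid d$ from Lemma~\ref{dxi multiple of dK}, you show the number of $b\in\{0,\dotsc,d-1\}$ with $Q_b$ infinite is exactly $3d/d(K)$ when $d(K)\geq 4$ and $d$ when $d(K)\leq 3$, and matching the two counts forces $d(K)=d(K')$. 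Both routes rest entirely on Lemma~\ref{Qb empty infinite}; yours additionally needs the exact divisibility $d(K)\mid d$ (which you correctly invoke) to make the count $3d/d(K)$ an integer count of residue classes, whereas the paper's explicit witness avoids that and also tells you \emph{which} $b$ separates the two algebras. Your counting version is slightly less economical but arguably more conceptual, as it isolates the density $3/d(K)$ of nonempty generator classes as the actual invariant being compared, and you correctly identify that $d(K)\geq 4$ is used precisely to make $\{0,1,d(K)-1\}$ a proper subset of $\mathbb{Z}/d(K)$.
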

\begin{proof} Suppose that $d(K)=d(K')$. By Lemma \ref{Qb empty infinite}, the sets $Q_b$ and $Q'_b$ have the same cardinality for all $0\leq b<d$. Conversely, suppose that $d(K)\neq d(K')$; without loss of generality $d(K)<d(K')$. If $d(K)\in \{1,2,3\}$ let $b=2$. Then $Q_2$ is infinite, while $Q'_2$ is empty (since $d(K')\geq 4$ by assumption). If $d(K)\geq 4$ let $b=d(K)-1\geq 3$. Then $d(K)$ divides $b+1$, but $d(K')$ does not divide any of the integers $b-1,b,b+1$. Hence $Q_b$ is infinite and $Q'_b$ empty.
\qed\end{proof}
Using Lemma \ref{Qb and Q'b}, we can prove the following. 
\begin{lem}\label{Lemma if algebras isomorphic then dK=dK'} Suppose that either (i) $d=0$ or (ii) $d>0$ and at least one of the numbers $d(K),d(K')$ is $\geq 4$. If the algebras $\mathfrak{A}(X,M)$ and $\mathfrak{A}(X,M')$ are isomorphic, then $d(K)=d(K')$.
\end{lem}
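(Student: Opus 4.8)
The plan is to reduce the statement to a purely algebraic fact: for a polynomial algebra over $\mathbb{C}$ graded by an abelian group, equipped with a prescribed set of homogeneous algebra generators, the number of generators of each degree is an invariant of the graded algebra up to graded isomorphism. Granting this, a graded isomorphism $\mathfrak{B}(X,M)\cong\mathfrak{B}(X,M')$ forces $|Q_b|=|Q'_b|$ for every $b$ (and, when $d=0$, the full multiset of generator degrees to agree), after which the conclusion $d(K)=d(K')$ follows from the earlier lemmas.

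To prove the algebraic fact, the one point requiring care is that $\mathfrak{B}(X,M)$ need not be connected as a graded algebra --- for instance, when $d\geq1$ the set $Q_0$ is infinite by Lemma~\ref{Qb empty infinite} (since $d(K)$ divides $0$), so $\mathfrak{B}(X,M)_0\supsetneq\mathbb{C}$ --- hence one cannot simply take $\bigoplus_{g\neq0}\mathfrak{B}(X,M)_g$ as an augmentation ideal. Instead I would pick any graded $\mathbb{C}$-algebra homomorphism $\varepsilon\colon\mathfrak{B}(X,M)\to\mathbb{C}$, with $\mathbb{C}$ in degree $0$, for instance $q_{k,i}\mapsto0$. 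Such an $\varepsilon$ necessarily annihilates every generator of nonzero degree, so its kernel $I$ is generated by the homogeneous elements $q_{k,i}-\varepsilon(q_{k,i})$; since $\mathfrak{B}(X,M)$ is a polynomial algebra, $I/I^{2}$ is $\mathbb{C}$-free on the classes of these elements, the class of $q_{k,i}-\varepsilon(q_{k,i})$ being homogeneous of degree $\deg q_{k,i}$. Hence $\dim_{\mathbb{C}}(I/I^{2})_{g}$ equals the number of generators $q_{k,i}$ of degree $g$, independently of the choice of $\varepsilon$. Any graded algebra isomorphism carries one graded augmentation ideal to another, hence induces a degree-preserving isomorphism of indecomposables, which proves the claim; in particular $|Q_b|=|Q'_b|$ for all $b$.

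In case (ii), where $d>0$, I first note that in fact $d\geq4$: by Lemma~\ref{dxi multiple of dK}, $d(K)\mid d(\xi)=d$ and $d(K')\mid d(\xi')=d$, and since $d>0$, whichever of $d(K),d(K')$ is assumed $\geq4$ forces $d\geq4$. Thus the hypotheses of Lemma~\ref{Qb and Q'b} are met, and because $|Q_b|=|Q'_b|$ for all $0\leq b<d$, the contrapositive of Lemma~\ref{Qb and Q'b} gives $d(K)=d(K')$.

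In case (i), where $d=0$, the algebra $\mathfrak{B}(X,M)$ is $\mathbb{Z}$-graded and, by Lemma~\ref{deltagamma}(b), $\Delta=d(K)$, so $\deg q_{k,i}=\deg\Delta_i-2+2d(K)k$ with $\deg\Delta_i\in\{0,2,4\}$. By the algebraic fact, $\mathfrak{B}(X,M)$ and $\mathfrak{B}(X,M')$ have the same number of generators in each degree, and a short case analysis recovers $d(K)$ from this datum: if $d(K)=0$ there are infinitely many generators of degree $-2$, whereas if $d(K)\geq1$ all generators --- hence all nonzero graded pieces of $\mathfrak{B}(X,M)$ --- lie in nonnegative degrees; if $d(K)=1$ then $\mathfrak{B}(X,M)_0\neq\mathbb{C}$ because of the generator $q_{1,0}$, whereas if $d(K)\geq2$ then $\mathfrak{B}(X,M)$ is connected and the least positive degree carrying a generator is $2d(K)-2$. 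In every case $d(K)$ is determined by the graded-isomorphism type of $\mathfrak{B}(X,M)$, so $d(K)=d(K')$. The crux of the argument is this first point --- making ``number of generators per degree'' a genuine invariant despite $\mathfrak{B}(X,M)$ failing to be connected; once that is secured, case (ii) is a direct appeal to Lemma~\ref{Qb and Q'b} and case (i) is bookkeeping with the degree formula.
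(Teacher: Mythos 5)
Your proof is correct, and its overall architecture coincides with the paper's: reduce everything to the statement that a graded isomorphism forces the generator counts in each degree class to agree, then invoke Lemma~\ref{Qb and Q'b} (after noting $d\geq 4$ via Lemma~\ref{dxi multiple of dK}) for case (ii), and read off $\Delta$ from the degrees for case (i). Where you differ is in how the key invariance is justified. The paper argues by hand on a single generator: it picks $q'_{r,s}\in Q'_b$ with $Q_b=\emptyset$, observes that some $\phi(q_{k_w,i_w})$ must contain a linear summand $\alpha q'_{r,s}$, and derives a degree contradiction. You instead prove the general statement that for a polynomial algebra graded by an abelian group, $\dim_{\mathbb{C}}(I/I^2)_g$ for a graded augmentation ideal $I$ counts the generators of degree $g$ and is independent of the choice of augmentation, hence a graded-isomorphism invariant. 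This is the standard ``indecomposables'' formalization of exactly the step the paper does informally, and your explicit treatment of the failure of connectedness (the degree-zero part is not $\mathbb{C}$ when $Q_0$ is infinite, so one must choose an augmentation rather than use $\bigoplus_{g\neq 0}\mathfrak{B}_g$) is a genuine point of care: the paper's linear-summand argument implicitly has to cope with nonzero constant terms of the images $\phi(q_{k,i})$ of degree-zero generators, which your translation $q_{k,i}-\varepsilon(q_{k,i})$ handles cleanly. Your case (i) is likewise a slightly more careful version of the paper's ``compare the lowest degree'' argument (the paper's phrase ``elements of lowest degree'' needs the small case distinction you supply, since constants sit in degree $0$); both recover $\Delta=\Delta'$ and hence $d(K)=d(K')$ via Lemma~\ref{deltagamma}. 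In short: same route, with one step made rigorous in a more systematic and reusable way.
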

This implies one direction of Theorem \ref{isom cont hom}.
\begin{proof} Suppose that $d=0$ and that there is an isomorphism $\phi\colon \mathfrak{A}(X,M)\rightarrow \mathfrak{A}(X,M')$. Note that all elements in $\mathfrak{L}(X)$ have degree zero. Depending on the sign of $\Delta$ we consider the element of highest or lowest degree in $\mathfrak{A}(X,M)$, and similarly in $\mathfrak{A}(X,M')$. Since $\phi$ has to preserve degree, this implies $\Delta=\Delta'$ and hence
\begin{equation*}
d(K)=\gcd(\Delta,0)=\Delta=\Delta'=\gcd(\Delta',0)=d(K').
\end{equation*}
Now assume that $d>0$ and at least one of $d(K),d(K')$ is $\geq 4$. By Lemma \ref{dxi multiple of dK}, the integer $d$ is at least $4$. Suppose that $d(K)\neq d(K')$ and there exists an isomorphism $\phi\colon \mathfrak{A}(X,M)\rightarrow \mathfrak{A}(X,M')$. 

By Lemma \ref{Qb and Q'b}, there exists an integer $0\leq b<d$ such that $Q_b$ and $Q'_b$ have different cardinality. Without loss of generality, we may assume that $Q_b$ is empty and $Q'_b$ infinite (otherwise we consider $\phi^{-1}$). Let $q'_{r,s}$ be a generator in $Q'_b$. Then $q'_{r,s}$ is a polynomial in the images 
\begin{equation*}
\{\phi(q_{k,i})\}_{k\in\mathbb{N}, 0\leq i\leq a},
\end{equation*}
with coefficients in $\mathfrak{L}(X)$ and we can write
\begin{equation*}
q'_{r,s}=f(\phi(q_{k_1,i_1}),\dotsc,\phi(q_{k_v,i_v}))\in \mathfrak{L}(X)[\phi(q_{k_1,i_1}),\dotsc,\phi(q_{k_v,i_v})].
\end{equation*}
The images $\phi(q_{k,i})$ are themselves polynomials in the variables $\{q'_{l,j}\}$ with coefficients in $\mathfrak{L}(X)$. Expressed as a polynomial in the variables $\{q'_{l,j}\}$, at least one of the images $\phi(q_{k_w,i_w})$, $1\leq w\leq v$, must contain a summand of the form $\alpha q'_{r,s}$ with $\alpha\in\mathfrak{L}(X)$ non-zero. Since $\phi$ preserves degrees, the element $\phi(q_{k_w,i_w})$ is homogeneous of degree
\begin{equation*}
\text{$\text{deg}(\phi(q_{k_w,i_w}))=\text{deg}(\alpha q'_{r,s})\equiv \text{deg}(q'_{r,s})\equiv 2b \bmod 2d$}.
\end{equation*}
This implies $\text{deg}(q_{k_w,i_w})\equiv 2b \bmod 2d$, hence $q_{k_w,i_w}\in Q_b$. This is impossible, since $Q_b$ is empty.  
\qed\end{proof} 
The other direction of Theorem \ref{isom cont hom} follows from the next lemma.
\begin{lem}\label{Lemma dK=dK' implies algebras isomorphic} Suppose that either (i) $d(K)=d(K')$ or (ii) both numbers $d(K),d(K')$ are $\leq 3$ and $d\neq 0$. Then the algebras $\mathfrak{A}(X,M)$ and $\mathfrak{A}(X,M')$ are isomorphic over $\mathfrak{L}(X)$.
\end{lem}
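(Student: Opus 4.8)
The strategy is to exhibit an explicit isomorphism of graded $\mathfrak{L}(X)$-algebras by matching generators $q_{k,i}$ of $\mathfrak{A}(X,M)$ with generators $q'_{l,j}$ of $\mathfrak{A}(X,M')$ of the same degree. Since both algebras are free polynomial algebras over $\mathfrak{L}(X)$ on the countable sets $\{q_{k,i}\}$ and $\{q'_{l,j}\}$ (with $i,j$ ranging over a fixed finite index set $\{0,\dotsc,a\}$ and $k,l$ over $\mathbb{N}$), it suffices to produce a degree-preserving bijection between these two generating sets; extending multiplicatively and $\mathfrak{L}(X)$-linearly then gives the desired isomorphism. Recall $deg(q_{k,i})=deg\Delta_i-2+2\Delta k$ and $deg(q'_{l,j})=deg\Delta_j-2+2\Delta' l$, where $deg\Delta_i\in\{0,2,4\}$; so the degree of a generator only depends on $\epsilon:=1-\tfrac12 deg\Delta_i\in\{-1,0,1\}$ and on $k$, via the value $-2\epsilon+2\Delta k$. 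The task reduces to a statement purely about the multisets of integers $\{-\epsilon+\Delta k : \epsilon\in\{-1,0,1\},\ k\in\mathbb{N}\}$ versus the same with $\Delta'$, with appropriate finite multiplicities coming from the number of indices $i$ realizing a given $\epsilon$.

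In case (i), $d(K)=d(K')$. First I would handle the subcase $d=0$: then $\Delta=d(K)=d(K')=\Delta'$ by Lemma \ref{deltagamma}(b), so the two algebras are literally defined by the same degree data and are isomorphic by the identity matching $q_{k,i}\mapsto q'_{k,i}$. For $d\geq 1$ the point is that both $\Delta\equiv\gamma$ mod $d$ and $\Delta'\equiv\gamma'$ mod $d$, and $\gcd(\Delta,d)=\gcd(\Delta',d)=d(K)$; I would use this together with Lemma \ref{Qb empty infinite} to see that for each residue $b$ the sets $Q_b$ and $Q'_b$ have the same cardinality (both infinite or both empty). Then I would refine this: within a fixed nonempty $Q_b$, one must match the actual integer degrees, not just residues mod $2d$. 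The key observation is that as $k$ ranges over $\mathbb{N}$, the degrees $-2\epsilon+2\Delta k$ in $Q_b$ form an arithmetic-progression-like set that is eventually $2d\mathbb{Z}$-periodic with a fixed finite pattern, and the same holds for $Q'_b$ with the same period and (because $\Delta\equiv\Delta'$ mod $d$ and the gcd's agree) the same eventual pattern and multiplicities; a finite adjustment at the low-degree end, made possible by the fact that we have infinitely many generators to play with in each nonempty class, produces the bijection. This bookkeeping — carefully listing degrees with multiplicity and checking the two sorted lists coincide — is the technical heart and the main obstacle.

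In case (ii), $d(K),d(K')\leq 3$ and $d\geq 1$; by Example \ref{example dK leq 123 then all Qb infinite} every $Q_b$ and every $Q'_b$ is infinite. Here I would argue even more cheaply: since $d(K),d(K')\in\{1,2,3\}$ divide all of $b-1,b,b+1$ is only automatic for $d(K)=1$, but what we actually need is that for every residue $b$ mod $d$ the degree set realized in $Q_b$, as a subset of $2b+2d\mathbb{Z}$, is cofinite — indeed for $d(K)\leq 3$ one checks each of the three shifts $\epsilon\in\{-1,0,1\}$ contributes an arithmetic progression and their union covers all sufficiently large elements of $2b+2d\mathbb{Z}$, with multiplicities that are eventually constant and equal on the two sides (the eventual multiplicity being the number of $i$ with the appropriate $deg\Delta_i$, which is the same combinatorial datum for $M$ and $M'$). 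Thus in each residue class the two generating sets have the same eventual degree profile, and again a finite rearrangement — absorbable because the sets are infinite — yields a degree-preserving bijection. I expect the only real work is to make the phrase ``same eventual degree profile, up to finite rearrangement'' precise: one should state and prove a small lemma that two free polynomial $\mathfrak{L}(X)$-algebras on generating sets with the same number of generators in each degree (equivalently, the same generating-function/Poincaré series in the grading group) are isomorphic over $\mathfrak{L}(X)$, and then verify the Poincaré series agree in each of the two cases using the divisibility hypotheses and Lemma \ref{deltagamma}.
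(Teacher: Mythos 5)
Your reduction to the combinatorics of the sets $Q_b$ and your treatment of the case $d=0$ agree with the paper, but the plan for $d\geq 1$ has a genuine gap: the exact degree-preserving bijection between the generating sets $\{q_{k,i}\}$ and $\{q'_{l,j}\}$ that you want to construct does not exist in general, and no ``finite adjustment at the low-degree end'' can produce it. The generator degrees are $deg\Delta_i-2+2\Delta k$ and $deg\Delta_j-2+2\Delta' l$, i.e.\ unions of arithmetic progressions with spacings $2\Delta$ and $2\Delta'$ respectively; the hypotheses only control $\Delta$ and $\Delta'$ modulo $d$ (and their gcd's with $d$), so $\Delta$ and $\Delta'$ can differ wildly. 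For instance if $\Delta=1$ the algebra $\mathfrak{A}(X,M)$ has generators in every even degree $\geq 0$ with eventually constant multiplicity $a+1$, while if $\Delta'=1+d$ the generators of $\mathfrak{A}(X,M')$ sit only in the sparse degrees $2(1+d)l+\{-2,0,2\}$; the two degree multisets differ in density, not merely in a finite initial segment, so the ``Poincar\'e series of the generating sets'' do not agree and your proposed small lemma cannot be applied. (This already happens for the same $M$ with two different choices of $A_0$, which is why the earlier lemma on independence of $A_0$ also does not proceed by matching degrees on the nose.)

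The missing idea is that an isomorphism over $\mathfrak{L}(X)$ need not send generators to generators: it may send a generator to a generator times a \emph{unit} of $\mathfrak{L}(X)$. The paper chooses the basis so that $z_1'$ is an invertible element of degree $-2d$, picks a bijection $\psi$ of the index sets that preserves degrees only modulo $2d$ (which exists precisely because $Q_b$ and $Q'_b$ have the same cardinality for every $b$, by Lemma \ref{Qb and Q'b} and Example \ref{example dK leq 123 then all Qb infinite} under hypotheses (i) or (ii)), and then defines $q_{k,i}\mapsto {z_1'}^{\alpha(k,i)}q'_{\psi(k,i)}$ with $\alpha(k,i)$ chosen to absorb the remaining multiple of $2d$ in the degree. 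This twist by powers of $z_1'$ is exactly what makes the mod-$2d$ information sufficient; without it, the step you defer as ``the technical heart'' is not bookkeeping but an impossibility.
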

\begin{proof} We can choose a basis $B_1,\dotsc,B_N$ of $H_2(X)$ such that
\begin{align*}
c_1(B_1)&=d(\xi)=d\\
c_1(B_n)&=0,\quad \text{for all $2\leq n\leq N$}.
\end{align*} 
Choose elements $A_0\in H_2(M)$ and $A'_0\in H_2(M')$ on which the symplectic forms evaluate to one and set
\begin{equation*}
\Delta=c_1(A_0),\,\,\Delta'=c_1(A'_0).
\end{equation*}
We will use these bases to define the algebras $\mathfrak{A}(X,M)$ and $\mathfrak{A}(X,M')$. Suppose that $d=0$ and $d(K)=d(K')$. Then 
\begin{align*}
\Delta&=\gcd(\Delta,0)=d(K)\\
\Delta'&=\gcd(\Delta',0)=d(K').
\end{align*}
This implies $\text{deg}(q_{k,i})=\text{deg}(q'_{k,i})$ for all $k\in\mathbb{N}, 0\leq i\leq a$. Hence the map
\begin{equation*}
q_{k,i}\mapsto q'_{k,i}, \quad k\in\mathbb{N},\,0\leq i\leq a,
\end{equation*}
induces a degree preserving isomorphism $\phi\colon \mathfrak{A}(X,M)\rightarrow \mathfrak{A}(X,M')$.

Suppose $d\geq 1$. Under our assumptions, the sets $Q_b$ and $Q'_b$ have the same cardinality for each $0\leq b<d$, cf.~Lemma \ref{Qb and Q'b} and Example \ref{example dK leq 123 then all Qb infinite}. Hence there exists a bijection
\begin{equation*}
\psi\colon \mathbb{N}\times\{0,\dotsc,a\}\longrightarrow \mathbb{N}\times\{0,\dotsc,a\}, (k,i)\mapsto \psi(k,i),
\end{equation*}
such that
\begin{equation*}
\text{$\text{deg}(q_{k,i})\equiv \text{deg}(q'_{\psi(k,i)}) \bmod 2d$}.
\end{equation*}
Since $z_1$ has degree $-2d$, there exists for each $(k,i)\in\mathbb{N}\times\{0,\dotsc,a\}$ an integer $\alpha(k,i)\in\mathbb{Z}$, such that
\begin{equation*}
\text{deg}(q_{k,i})= \text{deg}\left({z_1}^{\alpha(k,i)}q'_{\psi(k,i)}\right).
\end{equation*}
The map
\begin{equation*}
q_{k,i}\mapsto {z_1}^{\alpha(k,i)}q'_{\psi(k,i)}, \quad k\in\mathbb{N},\,0\leq i\leq a,
\end{equation*}
therefore induces a well-defined isomorphism $\phi\colon \mathfrak{A}(X,M)\rightarrow \mathfrak{A}(X,M')$ over $\mathfrak{L}(X)$, preserving degrees. 
\qed\end{proof}

Using Theorem \ref{isom cont hom} and Proposition \ref{prop dxi equivalence} we get the following corollary. The part concerning equivalent contact structures follows because equivalent contact structures have isomorphic contact homologies.  

\begin{cor}\label{cor different contact str} Let $X$ be a closed simply-connected 5-manifold which can be realized in two different ways as a Boothby-Wang fibration over closed simply-connected symplectic 4-manifolds $(M_1,\omega_1)$ and $(M_2,\omega_2)$, whose symplectic forms represent integral and indivisible classes:
\begin{equation*}
\xymatrix{&X \ar[dl]_{\pi_1}\ar[dr]^{\pi_2}&\\
          (M_1,\omega_1)&&(M_2,\omega_2)}
\end{equation*}
Denote the associated Boothby-Wang contact structures on $X$ by $\xi_1$ and $\xi_2$ and the canonical classes of the symplectic structures by $K_1$ and $K_2$. Let $d(\xi_i)$ denote the divisibility of the first Chern class of $\xi_i$ and $d(K_i)$ the divisibility of $K_i$. Then:
\begin{itemize}
\item The almost contact structures underlying $\xi_1$ and $\xi_2$ are equivalent if and only if $d(\xi_1)=d(\xi_2)$.
\end{itemize}
Suppose that $\xi_1$ and $\xi_2$ are equivalent as contact structures.
\begin{itemize}
\item If $d(\xi_1)=d(\xi_2)=0$, then $d(K_1)=d(K_2)$.
\item If $d(\xi_1)=d(\xi_2)\neq 0$, then either both $d(K_1),d(K_2)\leq 3$ or $d(K_1)=d(K_2)\geq 4$.
\end{itemize}
\end{cor}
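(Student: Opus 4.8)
The plan is to derive Corollary \ref{cor different contact str} as a more or less formal consequence of the results already assembled in the excerpt, so the work is one of assembly and bookkeeping rather than new analysis.

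First I would dispense with the statement on almost contact structures. By construction both $\xi_1$ and $\xi_2$ are Boothby-Wang contact structures on the same simply-connected 5-manifold $X$ with torsion-free homology, so by Lemma \ref{Chern BW X} their first Chern classes are $c_1(\xi_i)=\pi_i^*c_1(M_i)$ and by Corollary \ref{Class 5-mfd almost cont} two almost contact structures on $X$ are equivalent precisely when their first Chern classes have the same divisibility in $H^2(X)$, i.e.\ precisely when $d(\xi_1)=d(\xi_2)$. This is exactly part (b) of Proposition \ref{prop dxi equivalence} applied to $M'=M_2$, so nothing new is needed.

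Next, suppose $\xi_1$ and $\xi_2$ are equivalent as contact structures. Then in particular they are equivalent as almost contact structures, so by the first bullet $d(\xi_1)=d(\xi_2)=:d$. Equivalence of contact structures means one is carried to the other by a self-diffeomorphism of $X$ composed with an isotopy of contact structures; since contact homology is a diffeomorphism invariant and is invariant under isotopy, the specialized Morse-Bott contact homologies $H_*^{cont}(X,\xi_1)$ and $H_*^{cont}(X,\xi_2)$ at $t=0$ are isomorphic as graded algebras. By Theorem \ref{conhom} these are isomorphic to $\mathfrak{A}(X,M_1)$ and $\mathfrak{A}(X,M_2)$ respectively, over the common coefficient ring $\mathfrak{L}(X)$ (which depends only on $X$, not on the chosen symplectic fibration). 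Hence $\mathfrak{A}(X,M_1)\cong\mathfrak{A}(X,M_2)$ over $\mathfrak{L}(X)$. Now invoke the ``only if'' direction of Theorem \ref{isom cont hom}: this isomorphism forces one of the three listed conditions. If $d=0$, the only applicable condition is $d=0$ and $d(K_1)=d(K_2)$, giving the second bullet. If $d\neq 0$ (hence $d\geq 1$), the applicable conditions are ``$d\geq 1$ and both $d(K_1),d(K_2)\leq 3$'' or ``$d\geq 4$ and $d(K_1)=d(K_2)\geq 4$''; together these say exactly that either both $d(K_i)\leq 3$ or $d(K_1)=d(K_2)\geq 4$, which is the third bullet.

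The one point requiring a little care — and the only place where I would be careful about the logic — is that Theorem \ref{isom cont hom} is stated as an ``if and only if'' characterizing when $\mathfrak{A}(X,M)\cong\mathfrak{A}(X,M')$, under the standing assumptions (1)--(3) preceding it, namely that $X$ is simultaneously a Boothby-Wang total space over both 4-manifolds with $b_2(M)=b_2(M')$ and $d(\xi)=d(\xi')$. I would check that these standing assumptions are met in the corollary's setup: $b_2(M_1)=b_2(X)+1=b_2(M_2)$ holds by Theorem \ref{class 4-5 circle bundles}, and $d(\xi_1)=d(\xi_2)$ was just established from the contact equivalence. So Theorem \ref{isom cont hom} applies verbatim, and its contrapositive (if $\mathfrak{A}(X,M_1)\cong\mathfrak{A}(X,M_2)$ then one of the three conditions holds) is precisely what is used. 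There is no hard step; the proof is a direct citation chain, and I would simply write it out as such.

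\begin{proof} The statement on almost contact structures is part (b) of Proposition \ref{prop dxi equivalence} applied with $(M',\omega')=(M_2,\omega_2)$: since both $\xi_i$ are Boothby-Wang contact structures on $X$, Corollary \ref{Class 5-mfd almost cont} and Lemma \ref{Chern BW X} show that their underlying almost contact structures are equivalent if and only if $c_1(\xi_1)$ and $c_1(\xi_2)$ have the same divisibility in $H^2(X)$, that is, if and only if $d(\xi_1)=d(\xi_2)$.

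Now suppose $\xi_1$ and $\xi_2$ are equivalent as contact structures. Then they are in particular equivalent as almost contact structures, so by the first part $d(\xi_1)=d(\xi_2)=:d$. Since contact homology is invariant under isotopy and under pullback by self-diffeomorphisms, equivalent contact structures have isomorphic contact homologies; hence the specialized Morse-Bott contact homologies of $(X,\xi_1)$ and $(X,\xi_2)$ at $t=0$ are isomorphic as graded algebras. By Theorem \ref{conhom} they are isomorphic over $\mathfrak{L}(X)$ to $\mathfrak{A}(X,M_1)$ and $\mathfrak{A}(X,M_2)$ respectively, so $\mathfrak{A}(X,M_1)\cong\mathfrak{A}(X,M_2)$ over $\mathfrak{L}(X)$.

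By Theorem \ref{class 4-5 circle bundles} we have $b_2(M_1)=b_2(X)+1=b_2(M_2)$, and we have just seen $d(\xi_1)=d(\xi_2)$, so the standing hypotheses of Theorem \ref{isom cont hom} are satisfied. That theorem then implies that one of its three conditions holds. If $d=0$, only the second condition is possible, giving $d(K_1)=d(K_2)$. If $d\neq 0$, then either $d\geq 1$ with both $d(K_1),d(K_2)\leq 3$, or $d\geq 4$ with $d(K_1)=d(K_2)\geq 4$; in either case, either both $d(K_1),d(K_2)\leq 3$ or $d(K_1)=d(K_2)\geq 4$. This proves the last two bullets.
\end{proof}
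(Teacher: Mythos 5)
Your proof is correct and follows essentially the same route as the paper, which dispatches the corollary in one sentence by citing Proposition \ref{prop dxi equivalence}, Theorem \ref{conhom}, Theorem \ref{isom cont hom}, and the invariance of contact homology under equivalence. Your write-up just makes that citation chain explicit, including the (worthwhile) check that the standing hypotheses of Theorem \ref{isom cont hom} are satisfied.
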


\section{Applications}\label{sect applications}

In order to apply Corollary \ref{cor different contact str} it is useful to have as many contact structures on different levels of $X$ as possible. By Lemma \ref{dxi multiple of dK}, the level is always a multiple of the divisibility of the canonical class. We first want to show that one can perturb a single symplectic form $\omega$ on a given simply-connected 4-manifold $M$ without changing the canonical class $K$, so that the induced Boothby-Wang contact structures realize all levels which are non-zero multiples of the divisibility $d(K)$.

For the following lemma, recall that a symplectic 4-manifold is called {\em minimal} if it does not contain an embedded symplectic sphere $S$ of self-intersection $-1$. If $S$ is such a sphere and $K$ the canonical class, then the intersection number $K\cdot S$ is equal to $-1$ by the adjunction formula. Hence if the divisibility $d(K)$ is at least two, then $M$ is minimal.
\begin{lem}\label{inflation canonical class} Let $(M,\omega)$ be a minimal closed symplectic 4-manifold with $b_2^+(M)>1$ and canonical class $K$. Then every class in $H^2(M;\mathbb{R})$ of the form $[\omega]+tK$ for a real number $t\geq 0$ can be represented by a symplectic form.
\end{lem}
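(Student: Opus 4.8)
The plan is to use the inflation trick of Lalonde--McDuff together with the structure theory of minimal symplectic 4-manifolds. The key input is the \emph{inflation lemma}: if $(M,\omega)$ is a closed symplectic 4-manifold and $Z$ is an embedded $\omega$-symplectic surface, then $[\omega]+s\,\mathrm{PD}[Z]$ is represented by a symplectic form for all $s\geq 0$ (and more generally for all $s\geq -1/([Z]\cdot[Z])$ when $[Z]^2<0$, but here only $s\geq 0$ is needed). So the task reduces to realizing the ray $tK$, $t\geq 0$, as a non-negative linear combination of Poincar\'e duals of embedded symplectic surfaces, up to a positive rescaling of $\omega$ which does not affect the statement (one can absorb it since $[\omega]+tK$ and $\lambda[\omega]+\lambda t K$ differ by scaling and the set of symplectic classes is a cone under positive scaling).

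First I would invoke the classification of the canonical class of minimal symplectic 4-manifolds with $b_2^+>1$: by results building on Taubes' work on Seiberg--Witten theory (see Li--Liu, Taubes), for such $M$ the canonical class $K$ satisfies $K^2\geq 0$ and $K\cdot[\omega]\geq 0$, and in fact $K$ lies in the closure of the symplectic cone up to the cases covered by the Enriques--Kodaira-type classification; more precisely, $K$ or a positive multiple is represented by an embedded $\omega$-symplectic surface (a canonical divisor), except possibly when $K$ is torsion, in which case $K=0$ in $H^2(M;\mathbb{R})$ and the statement is trivial since then $[\omega]+tK=[\omega]$. Using Taubes' result that for $b_2^+>1$ the class $\mathrm{PD}(K)$ has a (possibly disconnected, embedded) pseudoholomorphic representative, we get an embedded symplectic surface $Z$ with $[Z]=\mathrm{PD}(K)$ (discarding sphere components of negative square is unnecessary because minimality forbids symplectic $(-1)$-spheres, and other components can be made symplectic by a standard smoothing argument on the pseudoholomorphic representative).

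Then I would apply inflation along $Z$: since $\mathrm{PD}[Z]=K$, the class $[\omega]+sK$ is symplectic for all $s\geq 0$, which is exactly the claim with $t=s$. The one subtlety is that the Taubes/Li--Liu canonical representative is only guaranteed for \emph{some} positive multiple $mK$ rather than $K$ itself; in that case inflation gives $[\omega]+s\,mK$ symplectic for all $s\geq 0$, and since $\{sm : s\geq 0\}=\{t:t\geq 0\}$ we again get every $[\omega]+tK$, $t\geq 0$. Finally I would note the hypothesis $b_2^+(M)>1$ is used precisely to guarantee the nonvanishing of the relevant Seiberg--Witten invariant (equivalently the Gromov invariant) of the canonical $\mathrm{Spin}^c$-structure, so that a pseudoholomorphic canonical divisor exists; for $b_2^+=1$ wall-crossing makes this fail in general.

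The main obstacle is establishing that the canonical class (or a positive multiple) of a minimal $(M,\omega)$ with $b_2^+>1$ is represented by an embedded \emph{symplectic} surface, rather than merely a pseudoholomorphic or immersed one. This requires care: Taubes' theorem produces a $J$-holomorphic subvariety whose underlying homology class is $\mathrm{PD}(K)$; one must argue, using minimality to exclude exceptional sphere components and a perturbation/smoothing of nodal points, that the class is realized by a genuine embedded $\omega$-symplectic surface to which the inflation construction applies. This step is where I expect to lean most heavily on the literature (Taubes, Li--Liu, McDuff's inflation papers), and it is the technical heart of the argument; the rest is the elementary observation that the cone of symplectic classes is invariant under the operations used.
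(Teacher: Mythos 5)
Your proposal is correct and follows essentially the same route as the paper: the paper likewise uses the fact (citing Hamilton--Kotschick, which rests on Taubes' Seiberg--Witten theory) that minimality and $b_2^+>1$ give a representation of $K$ by disjoint embedded symplectic surfaces of non-negative self-intersection, and then applies Lalonde--McDuff inflation along these with parameter $t\geq 0$. The only blemish is your parenthetical range ``$s\geq -1/([Z]\cdot[Z])$'' for negative-square inflation, which is stated in the wrong direction (it is an upper bound on $s$), but as you note that case is not used.
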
 
\begin{proof} Note that the canonical class $K$ is a Seiberg-Witten basic class. Since $M$ is assumed minimal, Proposition 3.3 and the argument in Corollary 3.4 in \cite{HK} show that $K$ is represented by a disjoint collection of embedded symplectic surfaces in $M$, all of which have non-negative self-intersection. The inflation procedure \cite{LalMcD,McD}, which can be done on each of the surfaces separately and with the same parameter $t\geq 0$, shows that $[\omega]+tK$ is represented by a symplectic form on $M$.
\qed\end{proof}

We can now prove:
\begin{thm}\label{Cieliebak} Let $M$ be a closed minimal simply-connected 4-manifold such that $b_2^+(M)>1$ and $\omega$ a symplectic form on $M$. Denote the canonical class of $\omega$ by $K$ and let $m\geq 1$ be an arbitrary integer. Then there exists a symplectic form $\omega'$ on $M$, deformation equivalent to $\omega$ and representing an integral and indivisible class, such that the first Chern class of the associated Boothby-Wang contact structure $\xi'$ has divisibility $d(\xi')=md(K)$. 
\end{thm}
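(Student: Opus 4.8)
The plan is to combine the inflation lemma (Lemma~\ref{inflation canonical class}) with the computation of the level $d(\xi)$ in terms of the canonical class (Lemmas~\ref{lem calc dxi} and~\ref{dxi multiple of dK}). First I would invoke Lemma~\ref{inflation canonical class}: since $M$ is minimal, simply-connected with $b_2^+(M)>1$, every class of the form $[\omega]_{\mathbb R}+tK$ with $t\geq 0$ is represented by a symplectic form, and all such forms are deformation equivalent to $\omega$ (they lie on a path of symplectic forms), hence have the same canonical class $K$. The idea is then to pick $t$ of a suitable form so that $[\omega]_{\mathbb R}+tK$ is a rational class, clear denominators by scaling, and arrange that after scaling the resulting integral class has the prescribed divisibility relationship with $c_1(M)=-K$.

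Concretely, I would proceed as follows. Write $K=d(K)W$ with $W\in H^2(M;\mathbb Z)$ indivisible. By the argument at the start of Section~\ref{Boothby Wang construct section} we may already assume $[\omega]_{\mathbb Z}$ is an integral indivisible class; choose a large integer $p$ and a suitable $t$ so that the class $\eta:=p[\omega]_{\mathbb Z}+qK$ is integral for an appropriate integer $q\geq 0$ (e.g. take $t=q/p$ rational), and so that $\eta$ is \emph{indivisible}. Indivisibility can be ensured because the set of pairs $(p,q)$ for which $p[\omega]_{\mathbb Z}+qK$ is divisible is constrained — since $[\omega]_{\mathbb Z}$ is indivisible, for $p$ coprime to $d(K)$ and to suitable data the sum stays indivisible; alternatively, one perturbs within the open cone of symplectic classes using a third indivisible class to kill any common divisor, still keeping the class of the form $[\omega]+tK$ up to a further small perturbation and rescaling (using that the symplectic condition is open and that nearby rational classes are realized). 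Then the Boothby-Wang total space over $(M,\omega')$ with $\omega'$ representing $\eta$ is again our $X$ (same $b_2(M)$, and the spin type is unchanged since $w_2(M)$ and the mod $2$ reduction of $K$ are fixed, and we will check $d(\xi')\equiv d(\xi)\bmod 2$ via Lemma~\ref{Contact spin}).

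It remains to compute $d(\xi')$. Using the identification $H^2(X)\cong H^2(M)/\mathbb Z\eta$ and $c_1(\xi')=[c_1(M)]=-[K]=-d(K)[W]$, Lemma~\ref{lem calc dxi} says $d(\xi')$ is the largest $d$ such that $c_1(M)=dR+\gamma\eta$ for some $R\in H^2(M)$ not a multiple of $\eta$ and some $\gamma\in\mathbb Z$. Since $K=d(K)W$ and $\eta=p[\omega]_{\mathbb Z}+qK=p[\omega]_{\mathbb Z}+qd(K)W$, one computes that the divisibility of $[K]$ in $H^2(M)/\mathbb Z\eta$ equals $d(K)\cdot\gcd\bigl(\text{something involving }p,q\bigr)$; choosing $q$ so that $qd(K)\equiv 0$ and $p$ so that the relevant gcd equals exactly $m$ (for instance arranging $\eta\equiv 0\bmod m$ in a complementary direction while $[K]$ picks up an extra factor $m$) yields $d(\xi')=md(K)$. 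The cleanest route: pick the integral symplectic representative so that $\eta$ is divisible by $m$ "in the $[\omega]$-direction" relative to $W$, i.e. arrange $\langle\eta,A\rangle\equiv 0\bmod m$ on a class $A$ dual to $W$, forcing $c_1(M)=-d(K)W$ to become $m$-times-more-divisible modulo $\eta$; then verify no larger divisibility occurs because $W$ is indivisible in $H^2(M)$ and $\eta$ is indivisible.

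The main obstacle I expect is the simultaneous bookkeeping: ensuring the integral class $\eta$ is \emph{indivisible} (required for the Boothby-Wang construction to give the simply-connected $X$ with $H_2(X)\cong\mathbb Z^{b_2(M)-1}$) while \emph{at the same time} forcing $[c_1(M)]$ to have divisibility exactly $md(K)$ rather than merely a multiple of $d(K)$. These two requirements pull in opposite directions — making $c_1(M)$ more divisible modulo $\eta$ typically wants $\eta$ to share structure with $K$ — so the delicate point is to choose the perturbation parameter $t$ (and the scaling integer) via an explicit congruence / Bézout argument, using that $[\omega]_{\mathbb Z}$ is indivisible and independent of $W$ in $H^2(M)$, to thread both conditions at once. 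I would handle this by working in a basis of $H^2(M)$ adapted to $W$ and to a class evaluating to $1$ on the Poincar\'e dual of $\omega$, reducing everything to an elementary statement about divisibility of integer vectors.
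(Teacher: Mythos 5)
Your overall strategy coincides with the paper's: inflate along $K$ via Lemma~\ref{inflation canonical class}, rescale to an integral indivisible class, and read off the level from the divisibility of $[c_1(M)]$ in $H^2(M)/\mathbb{Z}\eta$. But there is a genuine gap in the step where you produce $\eta$. You commit to $\eta=p[\omega]_{\mathbb Z}+qK$ with $p,q$ \emph{integers}. Work (as the paper does) in a basis of $H^2(M;\mathbb Z)$ with $K=d(K)\,e_1$ and $[\omega]_{\mathbb Z}=\omega_1e_1+\omega_2e_2$, which exists after first deforming $\omega$ so that it is not parallel to $K$ and saturating the sublattice spanned by $K$ and $[\omega]_{\mathbb Z}$. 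The computation you leave as ``something involving $p,q$'' is exactly the heart of the proof: for an indivisible symplectic class $\sigma=\sigma_1e_1+\sigma_2e_2$ the divisibility of $[c_1(M)]$ in $H^2(M)/\mathbb{Z}\sigma$ equals $d(K)\lvert\sigma_2\rvert$. For your $\eta$ the transverse coordinate is $p\omega_2$, so the only levels you can reach are the multiples of $d(K)\lvert\omega_2\rvert$; if $\lvert\omega_2\rvert>1$ you can never realize $m=1$ (take $[\omega]_{\mathbb Z}=(-1,2,0,\dots,0)$ and $K=(5,0,\dots,0)$: every $\eta=p[\omega]_{\mathbb Z}+qK$ has second coordinate $2p$, so the level is always a multiple of $10$, never $5$). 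The paper escapes this by first multiplying the symplectic form by the \emph{rational} number $m/\omega_2$ (a positive real rescaling of a symplectic form is still symplectic), obtaining the rational class $(\alpha,m,0,\dots,0)$, and only then inflating by $tK$ with $t=(1-\alpha)/d(K)$ to land on $(1,m,0,\dots,0)$, which is automatically indivisible because its first coordinate is $1$ and has level exactly $m\,d(K)$. Your fallback — perturbing in a third cohomology direction to kill common divisors — would destroy the two-coordinate normal form on which the divisibility computation rests, and you would have to redo that computation with no guarantee of landing on exactly $m\,d(K)$ rather than some other multiple of $d(K)$.

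A second, smaller omission: inflation only allows $t\geq 0$, so one must check that the parameter needed to reach the target class is nonnegative. The paper arranges this by a preliminary integral change of basis making $\omega_1<0<\omega_2$, so that $\alpha=\omega_1 m/\omega_2<0$ and hence $t=(1-\alpha)/d(K)>0$. You impose $q\geq 0$ but never verify that the required $q$ (or $t$) can actually be taken nonnegative once the other constraints are in place.
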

\begin{proof} Let $k=d(K)$. We can assume that $\omega$ is integral and choose a basis for $H^2(M;\mathbb{Z})$ such that
\begin{align*}
K&=k(1,0,\dotsc,0)\\
\omega&=(\omega_1,\omega_2,0,\dotsc,0).
\end{align*}
By a deformation we can assume that $\omega$ is not parallel to $K$, hence $\omega_2\neq 0$. We can also assume that $\omega_1$ is negative while $\omega_2$ is positive: Consider the change of basis vectors
\begin{align*}
(1,0,0,\dotsc,0)&\mapsto (1,0,0,\dotsc,0)\\
(0,1,0,\dotsc,0)&\mapsto (q,\pm 1,0,\dotsc,0),
\end{align*}
where $q$ is some integer. Then the expression of $\omega$ in the new basis is
\begin{equation*}
(\omega_1\mp q\omega_2,\pm \omega_2,0,\dotsc,0).
\end{equation*}
Hence if $q$ is large enough, has the correct sign and the $\pm$ sign is chosen correctly, the claim follows.

Suppose that $\sigma\in H^2(M;\mathbb{Z})$ is an indivisible class of the form
\begin{equation*}
\sigma=(\sigma_1,\sigma_2,0,\dotsc,0)
\end{equation*} 
which can be represented by a symplectic form, also denoted by $\sigma$, with canonical class $K$. Let $\zeta$ denote the contact structure induced on the Boothby-Wang total space by $\sigma$. We claim that the divisibility $d(\zeta)$ is given by
\begin{equation*}
d(\zeta)=k|\sigma_2|.
\end{equation*}
To prove this we write $K=-c_1(M)=rR+\gamma \sigma$ as in Lemma \ref{lem calc dxi}, where $R$ is a class of the form $R=(R_1,R_2,0...,0)$. Then $k-\gamma\sigma_1$ and $\gamma \sigma_2$ are divisible by $r$. This implies that $r$ divides $k\sigma_2$. Conversely note that by assumption $\sigma_1,\sigma_2$ are coprime. Let $R_1,R_2$ be integers with
\begin{equation*}
1=\sigma_2R_1-\sigma_1R_2
\end{equation*}
and define
\begin{equation*}
\gamma=-kR_2.
\end{equation*}
Then we can write
\begin{equation*}
K=k\sigma_2R-kR_2\sigma.
\end{equation*}
This proves the claim about $d(\zeta)$. 
 
Suppose that $m\geq 1$. By multiplying the expression for $\omega$ with the positive number $\frac{m}{\omega_2}$ we see that the (rational) class
\begin{equation*}
(\alpha, m, 0,\dotsc,0),\quad \alpha=\omega_1\tfrac{m}{\omega_2},
\end{equation*}
is represented by a symplectic form. Note that $\alpha<0$. By the inflation trick in Lemma \ref{inflation canonical class} with parameter $t=\frac{1}{k}(1-\alpha)$ it follows that
\begin{align*}
\omega'&=(\alpha, m, 0,\dotsc,0)+(1-\alpha,0,\dotsc,0)\\
&=(1,m,0,\dotsc,0)
\end{align*}
is represented by a symplectic form $\omega'$. The class $\omega'$ is indivisible and has canonical class $K$. Let $\xi'$ denote the induced Boothby-Wang contact structure. By our calculation above we have $d(\xi')=mk$.
\qed\end{proof}

\begin{defn} For integers $d\geq 4$ and $r\geq 2$ consider the following set:
\begin{equation*}
\Gamma(r,d)=\left\{k\in \mathbb{N}\Biggl\lvert\begin{array}{l} \text{$k\geq 1$, $k$ divides $d$ and there exists a simply-connected minimal}\\ \text{symplectic 4-manifold $(M,\omega)$ with $b_2(M)=r$ and $b_2^+(M)>1$}\\
\text{whose canonical class $K$ has divisibility $d(K)=k$.} \end{array}\right\}
\end{equation*}
We define an integer $Q(r,d)$ by counting the number of elements of $\Gamma(r,d)$ as follows: If there are integers $k\in \Gamma(r,d)$ with $k\leq 3$ we count only one of them and we count each integer $k\geq 4$ once.
\end{defn}
\begin{ex}
Suppose that for some integers $r,d$ we have
\begin{equation*}
\Gamma(r,d)=\{1,3,4,7,12\}.
\end{equation*}
Then $Q(r,d)=4$. If we have
\begin{equation*}
\Gamma(r,d)=\{1,2,3,6,12,19,27\},
\end{equation*}
then $Q(r,d)=5$.
\end{ex}
The numbers $Q(r,d)$ are connected to the geography of simply-connected symplectic 4-manifolds with divisible canonical class. The following lemma relates knowledge about the numbers $Q(r,d)$ to the existence of inequivalent contact structures on simply-connected 5-manifolds. Here we make essential use of Corollary \ref{cor different contact str} and Theorem \ref{Cieliebak}.
\begin{lem}\label{relation divis number contact} Let $d\geq 4$ and $r\geq 2$ be integers. Suppose that either 
\begin{itemize}
\item $d$ is odd and $X$ the simply-connected 5-manifold $\#(r-2)S^2\times S^3\#S^2\tilde{\times}S^3$, or
\item $d$ is even and $X$ the simply-connected 5-manifold $\#(r-1)S^2\times S^3$. 
\end{itemize}
In both cases, there exist at least $Q(r,d)$ many inequivalent contact structures on the level $d$ on $X$. 
\end{lem}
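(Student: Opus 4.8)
The plan is to realise, for each of the $Q(r,d)$ divisors $k$ of $d$ counted in the definition, a Boothby--Wang contact structure on $X$ which lies on level $d$ and comes from a symplectic $4$-manifold whose canonical class has divisibility exactly $k$; the $Q(r,d)$ resulting contact structures are then forced to be pairwise inequivalent by Corollary~\ref{cor different contact str}.

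\emph{Construction.} Fix an integer $k$ counted by $Q(r,d)$, so $k\geq 4$, $k\mid d$, and there is a closed simply-connected symplectic $4$-manifold $(M,\omega)$ with $b_2(M)=r$, $b_2^+(M)>1$ and $d(K)=k$. First I would observe that such an $M$ is automatically minimal: if $C\subset M$ were a symplectically embedded sphere of self-intersection $-1$, the adjunction formula would give $K\cdot[C]=-1$, so $K$ would be indivisible, contradicting $d(K)=k\geq 2$. Hence Theorem~\ref{Cieliebak} applies with the integer $m=d/k\geq 1$: it produces a symplectic form $\omega'$ on $M$, deformation equivalent to $\omega$ (so with the same canonical class $K$, in particular of divisibility $k$), representing an integral and indivisible class, and such that the associated Boothby--Wang contact structure $\xi'$ on the circle bundle $X'\to M$ with Euler class $[\omega']_{\mathbb Z}$ satisfies $d(\xi')=m\,d(K)=d$.

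\emph{Identification of the total space.} By Lemma~\ref{Contact spin}, $X'$ is spin exactly when $d(\xi')=d$ is even. Corollary~\ref{simpl conn 5-dim circle bundle} then gives $X'\cong\#(r-1)S^2\times S^3$ when $d$ is even and $X'\cong\#(r-2)S^2\times S^3\#S^2\tilde{\times}S^3$ when $d$ is odd, which in either case is precisely the manifold $X$ of the statement. Transporting $\xi'$ through an orientation-preserving diffeomorphism between $X'$ and $X$ yields a Boothby--Wang contact structure $\xi_k$ on $X$ with $d(\xi_k)=d$, i.e.\ on level $d$, whose underlying symplectic $4$-manifold has canonical class of divisibility $k$.

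\emph{Inequivalence and conclusion.} Carrying out this construction for each of the $Q(r,d)$ admissible values of $k$ gives contact structures $\xi_k$ on $X$, all on level $d$. If $\xi_k$ and $\xi_{k'}$ with $k\neq k'$ were equivalent as contact structures, then, since both are Boothby--Wang structures on $X$ with $d(\xi_k)=d(\xi_{k'})=d\neq 0$, Corollary~\ref{cor different contact str} would force either both associated canonical divisibilities to be $\leq 3$ or both to be equal and $\geq 4$; but they equal $k$ and $k'$, which are distinct and $\geq 4$ --- a contradiction. Hence the $\xi_k$ are pairwise inequivalent, so $X$ carries at least $Q(r,d)$ inequivalent contact structures on level $d$. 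The only point needing care is checking the hypotheses of Theorem~\ref{Cieliebak}, and in particular the minimality of the witnessing $4$-manifolds, which is the adjunction observation above; the remainder is bookkeeping with results already established.
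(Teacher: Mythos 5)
Your proposal is correct and follows essentially the same route as the paper: minimality of the witnessing $4$-manifolds from $d(K)\geq 2$ via the existence of a $K\cdot S=-1$ sphere on non-minimal manifolds, then Theorem~\ref{Cieliebak} with $m=d/k$ to land on level $d$, identification of the total space via the spin/level parity, and pairwise inequivalence from Corollary~\ref{cor different contact str}. The extra detail you give on identifying the diffeomorphism type of the circle bundle is a point the paper passes over more quickly, but the argument is the same.
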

\begin{proof} Recall that a spin (non-spin) simply-connected 5-manifold has only even (odd) levels. Suppose that $d\geq 4$ is an integer and $(M,\omega)$ a simply-connected minimal symplectic 4-manifold with $b_2(M)=r$ and $b_2^+(M)>1$ whose canonical class has divisibility $k=d(K)$ dividing $d$. We can write $d=mk$. By Theorem \ref{Cieliebak} there exists a symplectic structure $\omega'$ on $M$ that induces on the Boothby-Wang total space $X$ with $b_2(X)=r-1$ a contact structure with $d(\xi)=d$. Since the symplectic form $\omega'$ is deformation equivalent to $\omega$ the canonical class $K$ remains unchanged. By Corollary \ref{cor different contact str} the contact structures on the same non-zero level $d$ on $X$ coming from symplectic 4-manifolds with different divisibilities of their canonical classes, at most one divisibility less than $4$, are pairwise inequivalent.  
\qed\end{proof}
We have the following purely number theoretic definition.
\begin{defn} Let $d\geq 4$ be an integer. Consider the number of divisors greater or equal to four of $d$. Then $N(d)$ is this number plus one. If $d$ is even, consider the number of odd divisors greater or equal to four of $d$. Then $N'(d)$ is this number plus one.
\end{defn}
\begin{ex}
The different divisors of $60$ are
\begin{equation*}
1,2,3,4,5,6,10,12,15,20,30,60.
\end{equation*}
Hence $N(60)=10$ and $N'(60)=3$.
\end{ex}
The following lemma gives a bound on the maximal number of inequivalent contact structures that can be distinguished with our method. The proof uses some well-known properties of 4-manifolds that can be found for example in \cite{GS}.
\begin{lem}\label{lemma bounds on Q} Let $d\geq 4$ and $r\geq 2$ be integers. Then there are the following upper bounds for $Q(r,d)$.
\begin{enumerate}
\item For any $r$ we have $Q(r,d)\leq N(d)$.
\item If $d$ is even and $r$ is not congruent to $2 \bmod 4$, then $Q(r,d)\leq N'(d)$.
\end{enumerate}
\end{lem}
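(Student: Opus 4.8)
The plan is to handle the two parts separately. Part (a) is a counting tautology: by definition $Q(r,d)$ counts a subset of the set of integers $k$ with $k\geq 4$ and $k\mid d$, since these two conditions appear among the defining constraints of $Q(r,d)$, and $N(d)$ is by definition the cardinality of that ambient set. Hence $Q(r,d)\leq N(d)$.

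For part (b) the key observation is that an even value of $d(K)$ forces the $4$-manifold to be spin, which in turn constrains $b_2$ modulo $4$. Suppose $(M,\omega)$ is a closed simply-connected symplectic $4$-manifold with $b_2(M)=r$ and $b_2^+(M)>1$ whose canonical class has divisibility $k=d(K)\geq 4$, and suppose $k$ is even. Then $c_1(M)=-K$ is divisible by $2$, so $w_2(M)\equiv c_1(M)\equiv 0\bmod 2$ and $M$ is spin. I would then combine two standard facts about such an $M$: first, since $M$ is simply-connected and spin its intersection form is even, hence $\sigma(M)\equiv 0\bmod 8$ (equivalently one may cite Rokhlin's theorem, $16\mid\sigma(M)$; only $\sigma(M)\equiv 0\bmod 4$ will be used). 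Second, a simply-connected symplectic --- more generally almost complex --- $4$-manifold has $b_2^+(M)$ odd; this follows from the Hirzebruch--Hopf/Wu criterion by comparing $c_1^2=2\chi(M)+3\sigma(M)$ with the congruence $c^2\equiv\sigma(M)\bmod 8$ valid for any characteristic class $c$, together with $\chi(M)=2+b_2(M)$ in the simply-connected case. From the relation $b_2(M)=2b_2^+(M)-\sigma(M)$, the oddness of $b_2^+(M)$, and $4\mid\sigma(M)$, we conclude $r=b_2(M)\equiv 2\bmod 4$.

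Reading this contrapositively: when $d$ is even and $r\not\equiv 2\bmod 4$, no admissible $(M,\omega)$ can have even canonical divisibility, so every $k$ contributing to $Q(r,d)$ is an \emph{odd} integer $\geq 4$ dividing $d$. Therefore $Q(r,d)$ is bounded by the number of odd divisors $\geq 4$ of $d$, i.e.\ by $N'(d)$. I expect no genuine obstacle here; the single step I would cite rather than reprove is the parity statement "$b_2^+(M)$ is odd" for simply-connected symplectic $4$-manifolds, everything else being immediate from the evenness of the intersection form of a spin manifold and elementary arithmetic.
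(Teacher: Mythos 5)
Your proof is correct and follows essentially the same route as the paper: part (a) is definitional, and part (b) rests on the observation that an even canonical divisibility forces $M$ to be spin, whence the even intersection form gives $\sigma(M)\equiv 0 \bmod 8$, which combined with the oddness of $b_2^+(M)$ for simply-connected symplectic $4$-manifolds and $b_2(M)=2b_2^+(M)-\sigma(M)$ yields $b_2(M)\equiv 2\bmod 4$. You merely make explicit the step (even $d(K)\Rightarrow w_2(M)=0$) that the paper leaves implicit, and you correctly read the bound as $N'(d)$.
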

\begin{proof} The first statement is clear by the definitions. For the second statement, suppose that $M$ is a simply-connected symplectic spin 4-manifold. Note that $b_2^-=b_2^+-\sigma$, hence $b_2(M)=2b_2^+(M)-\sigma(M)$. Since $M$ is spin, the signature $\sigma(M)$ is divisible by $16$ according to Rohlin's theorem. This implies that $b_2(M)$ is congruent to $2 \bmod 4$, because $b_2^+(M)$ is odd for a simply-connected symplectic 4-manifold. Hence if $r$ is {\em not} congruent to $2 \bmod 4$ then there does not exist a simply-connected symplectic spin 4-manifold $M$ with second Betti number $r$. Since the divisibility of the canonical class of a non-spin symplectic 4-manifold is odd, this implies that in case (b) all numbers in the set $\Gamma(r,d)$ are odd.
\qed\end{proof} 

To calculate some of the numbers $Q(r,d)$ we can use the geography work in \cite{MHgeo}. Recall the following definition:
\begin{defn}A {\em homotopy elliptic surface} $M$ is a closed simply-connected 4-manifold homeomorphic to a relatively minimal simply-connected elliptic surface.
\end{defn}
Every relatively minimal simply-connected elliptic surface is diffeomorphic to a surface of the form $E(m)_{p,q}$ with $p$ and $q$ coprime integers. Here $E(m)$ denotes the (up to diffeomorphism unique) relatively minimal simply-connected elliptic surface without multiple fibres and Euler characteristic equal to $12m$ and $E(m)_{p,q}$ is obtained by two logarithmic transformations with indices $p$ and $q$, see \cite{GS}.  By definition, homotopy elliptic surfaces have topological invariants
\begin{equation*}
c_1^2(M)=0, \chi_h(M)=m, b_2(M)=12m-2, b_2^+(M)=2m-1,
\end{equation*}
for some integer $m\geq 1$. There are many constructions of exotic homotopy elliptic surfaces which are not diffeomorphic to elliptic surfaces. In \cite[Theorem 15]{MHgeo} we proved the following:
\begin{thm}\label{geo of hom elliptic surface}
Let $m$ and $k$ be positive integers. If $m$ is odd, assume that $k$ is odd also. Then there exists a symplectic homotopy elliptic surface $M$ with $\chi_h(M)=m$ whose canonical class $K$ has divisibility $k$.
\end{thm}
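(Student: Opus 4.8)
The plan is to realize each admissible pair $(m,k)$ by an explicit symplectic 4-manifold homeomorphic to some $E(m)_{p,q}$ with $d(K)=k$, and to check that the parity hypothesis is also necessary; full details appear in \cite{MHgeo}.

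\textbf{Building blocks and necessity of the hypothesis.} First I would record the canonical-class data of the relevant surfaces. The K\"ahler elliptic surface $E(n)$ with two logarithmic transforms of coprime multiplicities $p,q$ is simply connected with $c_1^2=0$ and $\chi_h=n$, its canonical class is $(npq-p-q)$ times a primitive class, so $d(K)=|npq-p-q|$, and it is spin precisely when $n$ is even and both $p,q$ are odd. Since $K\equiv w_2$ mod $2$, a non-spin homotopy elliptic surface must have $d(K)$ odd; as a homotopy elliptic surface with $m$ odd is homeomorphic to $E(m)$ and hence has odd intersection form, this forces $k$ odd, so the hypothesis of the theorem is exactly this obstruction and is also necessary. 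Two operations preserve the homeomorphism type (hence $\chi_h=m$) and the symplectic structure while changing $K$ additively: Fintushel--Stern knot surgery along a square-zero symplectic torus $T$ with simply connected complement, performed with a fibered knot of genus $g$, replaces $K$ by $K+2g[T]$; and inflation along the canonical class (Lemma~\ref{inflation canonical class}) keeps the symplectic class integral and indivisible without altering $K$.

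\textbf{The range covered by complex and knot-surgered surfaces.} Feeding $E(n)_{p,q}$ into these operations (doing the surgery on a regular or on a multiple fibre) realizes, at $\chi_h=m$, every divisibility of the form $|m'pq+aq+bp|$ with $\gcd(p,q)=1$, $m'\ge m$, $m'\equiv m$ mod $2$, and $a,b$ odd and $\ge -1$. A routine number-theoretic discussion shows that this set contains every admissible $k$ that is not small relative to $m$: for $m\le 3$ pure logarithmic transforms already suffice ($E(2)_{k+1}$ when $m=2$; $E(1)_{2,k+2}$ and $E(3)_{(k+1)/2}$ for the odd values when $m=1$ or $3$), and for larger $m$ one takes $p=q=1$ and absorbs the excess into the genus of the surgery knot, using the spin model $E(m)$ or the non-spin model $E(m)_2$ according to the parity of $k$. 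This disposes of all admissible pairs once $k$ exceeds a bound of size $O(m)$.

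\textbf{The small-divisibility cases and the main obstacle.} What remains are the pairs with $k$ small, where the operations above only \emph{increase} $d(K)$ and hence cannot reach them starting from $E(m)$; indeed a simply connected complex surface with $c_1^2=0$ and $\chi_h=m\ge 2$ is elliptic and has $d(K)\ge m-2$, so these pairs force one out of the complex category. For them I would build $M$ ``from below'': begin with a Dolgachev surface $E(1)_{2,q}$, whose canonical class is already primitive, raise $\chi_h$ to $m$ by symplectic fibre sums with further pieces (for instance suitably blown-up ruled surfaces, whose canonical class is negative on the gluing torus) chosen so that the additive contributions to $K$ cancel down to the target class modulo $k$, and make the exact adjustment of $d(K)$ to $k$ by Luttinger surgery on framed Lagrangian tori in the summands, which preserves the symplectic structure and can be kept compatible with simple connectivity. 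One then checks that $M$ has the intersection form and the spin type of the appropriate $E(m)_{p,q}$ — so that, by Freedman's theorem, it is a homotopy elliptic surface with $\chi_h=m$ — that $b_2^+(M)>1$ (so that Taubes' theorem identifies $K$ and the divisibility computation is valid), and that $\pi_1(M)=1$. The main difficulty is precisely this last regime: producing a symplectic 4-manifold with large $\chi_h$ but nearly primitive canonical class requires such a cancellation construction, and arranging it to be simultaneously symplectic, simply connected, and of the prescribed homeomorphism type is the delicate heart of the argument.
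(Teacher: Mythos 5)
This theorem is not proved in the present paper at all: it is imported verbatim from \cite[Theorem 15]{MHgeo}, so the only ``proof'' here is that citation, and your sketch has to be measured against the construction carried out there, whose engine is the generalized fibre sum along embedded tori together with an explicit formula for the canonical class of such a sum. The parts of your proposal that are checkable are correct: the necessity of the parity hypothesis (a homotopy elliptic surface with $m$ odd is non-spin, and $K\equiv w_2$ mod $2$ forces $d(K)$ odd), the formula $d(K)=|mpq-p-q|$ for $E(m)_{p,q}$, the effect $K\mapsto K+2g[T]$ of knot surgery, and the explicit models $E(2)_{k+1}$, $E(1)_{2,k+2}$, $E(3)_{(k+1)/2}$ for $m\le 3$ all check out. (Inflation is irrelevant to this statement, which says nothing about the symplectic class; it only enters later, in Theorem~\ref{Cieliebak}.)

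The genuine gap is exactly where you place it, and the repair you sketch does not work as stated. For large $m$ and small $k$ (say $m=10$, $k=1$) every tool in your first two paragraphs fails in the same direction: log transforms give $d(K)=10pq-p-q\ge 8$, knot surgery adds a non-negative multiple of $2[T]$, and a fibre sum along fibres contributes $+2[F]$ to the canonical class, so none of these can \emph{decrease} the divisibility below roughly $m-2$. These pairs are the actual content of the theorem. Your proposed fix --- fibre sums with blown-up ruled surfaces plus Luttinger surgery --- is not viable as described: blowing up changes $c_1^2$ and hence the homeomorphism type away from $E(m)_{p,q}$ unless the exceptional classes are somehow reabsorbed; Luttinger surgery leaves the canonical class unchanged near the torus, so any effect on $d(K)$ comes only through the change in the homology lattice, which you do not control; and no mechanism is offered for making the ``additive contributions cancel'' to a prescribed small divisibility while keeping $\pi_1=1$, $b_2^+=2m-1$ and the right parity of the intersection form. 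In \cite{MHgeo} this regime is handled instead by fibre-summing along tori that are not generic fibres (multiple fibres and other primitive square-zero tori), using the canonical class formula for the generalized fibre sum to arrange the divisibility directly. Without that ingredient or a genuine substitute, your argument establishes the theorem only for $k$ sufficiently large relative to $m$, not for all admissible pairs.
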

\begin{rem}
We will only use this theorem for $m\geq 2$. It follows from the construction that these symplectic homotopy elliptic surfaces are minimal. This is clear if $k\geq 2$ but holds also if $k=1$, because the manifolds are constructed using fibre sums and there is a way to determine when such a manifold is minimal \cite{Ush}. 
\end{rem}
This implies the following proposition about some of the numbers $Q(r,d)$:
\begin{prop}\label{Q for homotopy elliptic surfaces}
Let $d\geq 4$ be an integer.
\begin{enumerate}
\item If $d$ is odd and $n\geq 2$, then $Q(12n-2, d)=N(d)$.
\item Suppose that $d$ is even. If $n\geq 1$ then $Q(24n-2, d)=N(d)$ and if $n\geq 2$ then $Q(24n-14, d)\geq N'(d)$.
\end{enumerate}
\end{prop} 
\begin{proof} For part (a), let $r=12n-2$ and suppose that $d\geq 4$ is odd. To prove the claim we first find for every divisor $k\geq 4$ of the integer $d$ a simply-connected symplectic 4-manifold $M$ with $b_2=r$ and $b_2^+>1$ whose canonical class has divisibility equal to $k$: Since $d$ is odd, the integer $k$ is odd as well. By Theorem \ref{geo of hom elliptic surface} there exists a symplectic homotopy elliptic surface $M$ with $b_2=r$, $b_2^+\geq 3$ and $d(K)=k$. There also exists a minimal symplectic homotopy elliptic surface with the same invariants and $d(K)=1$. This implies the claim.

To prove part (b), suppose that $d\geq 4$ is even and let $r=24n-2$. Then for every divisor $k\geq 4$ of $d$ there exists by Theorem \ref{geo of hom elliptic surface} a symplectic homotopy elliptic surface $M$ with $b_2=r$, $b_2^+\geq 3$ and $d(K)=k$. Suppose that $r=24n-14=12(2n-1)-2$. Then for every odd divisor $k\geq 4$ of $d$ there exists by Theorem \ref{geo of hom elliptic surface} a symplectic homotopy elliptic surface $M$ with $b_2=r$, $b_2^+\geq 3$ and $d(K)=k$. In both cases there exists a minimal symplectic homotopy elliptic surface with the same invariants and $d(K)=1$. This proves the second claim.
\qed\end{proof}
As a corollary we get the following result about the existence of inequivalent contact structures in the same equivalence class of almost contact structures:
\begin{cor}\label{cor on existence of contact} Let $n\geq 1$ be an arbitrary integer.
\begin{enumerate}
\item On every odd level $d\geq 5$ the 5-manifold $\#(12n-4)S^2\times S^3\#S^2\tilde{\times}S^3$ admits at least $N(d)$ inequivalent contact structures.
\item On every even level $d\geq 4$ the 5-manifold $\#(24n-3)S^2\times S^3$ admits at least $N(d)$ inequivalent contact structures.
\item On every even level $d\geq 4$ the 5-manifold $\#(24n-15)S^2\times S^3$ admits at least $N'(d)$ inequivalent contact structures.
\end{enumerate}
\end{cor}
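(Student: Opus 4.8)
The plan is to obtain the corollary as a direct consequence of the preceding Proposition together with Lemma~\ref{relation divis number contact}, the only real content being the choice, in each of the three cases, of the second Betti number $r$ for which the $5$-manifold delivered by that lemma coincides with the one in the statement.

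Recall that for $d\geq 4$ Lemma~\ref{relation divis number contact} produces, on the level $d$ of a specific $5$-manifold $X$, at least $Q(r,d)$ pairwise inequivalent contact structures; moreover in the non-spin case (odd $d$) that $5$-manifold is $(r-2)S^2\times S^3\#S^2\tilde{\times}S^3$, and in the spin case (even $d$) it is $(r-1)S^2\times S^3$. First I would therefore match the Betti numbers: for part~(a) take $d\geq 5$ odd and $r=12n-2$, so that $r-2=12n-4$; for part~(b) take $d\geq 4$ even and $r=24n-2$, so that $r-1=24n-3$; for part~(c) take $d\geq 4$ even and $r=24n-14$, so that $r-1=24n-15$. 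In every case $r\geq 2$ (indeed $r\geq 10$ already for $n=1$) and $d\geq 4$, so the hypotheses of Lemma~\ref{relation divis number contact} are met; and since the contact structures produced all lie on the same level $d$ they are equivalent as almost contact structures by Corollary~\ref{Class 5-mfd almost cont}, i.e.~they form inequivalent contact structures inside a single equivalence class of almost contact structures.

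Next I would insert the values of $Q(r,d)$ computed in the Proposition: for $d$ odd one has $Q(12n-2,d)=N(d)$, which gives part~(a); for $d$ even one has $Q(24n-2,d)=N(d)$ and $Q(24n-14,d)\geq N'(d)$, which give parts~(b) and~(c). I do not expect any genuine obstacle here: the substantive geographic input---the existence of simply-connected symplectic $4$-manifolds with the prescribed second Betti number and prescribed divisibility of the canonical class, including the small cases $\chi_h=1$ handled through Dolgachev surfaces, and the parity constraint forcing the spin examples to have odd canonical divisibility---has already been absorbed into the Proposition. The only point to watch is consistency of parities: an odd level corresponds to a non-spin $X$, hence to a summand $S^2\tilde{\times}S^3$, and an even level to a spin $X$ with no such summand, which is precisely how Lemma~\ref{relation divis number contact} is organised, so no conflict arises.
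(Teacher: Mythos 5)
Your proposal is correct and follows exactly the paper's (implicit) argument: the corollary is obtained by combining Lemma~\ref{relation divis number contact} with the preceding Proposition, choosing $r=12n-2$, $24n-2$, and $24n-14$ respectively so that the $5$-manifolds $(r-2)S^2\times S^3\#S^2\tilde{\times}S^3$ (odd $d$) and $(r-1)S^2\times S^3$ (even $d$) match those in the statement. The parity check and the substitution of $Q(r,d)$ by $N(d)$ or $N'(d)$ are precisely what the paper intends.
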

\begin{proof} This follows from Proposition \ref{Q for homotopy elliptic surfaces} and Lemma \ref{relation divis number contact} in all cases except for the first and last case with $n=1$. In these cases we choose as $M$ a Dolgachev surface $E(1)_{p,q}$, where $p$ and $q$ are coprime positive integers and $b_2(M)=10$ and $b_2^+(M)=1$. The canonical class of a Dolgachev surface is given by $K=(pq-p-q)f$ where $f$ is an indivisible class. For every odd integer $k$ we choose $p=2$ and $q=k+2$.  It follows that we can realize all odd numbers $k$ as $d(K)$ for $b_2(M)=10$. Since the canonical class of these Dolgachev surfaces is a positive multiple of the class represented by a symplectic torus of self-intersection zero, given by one of the multiple fibres, the proofs of Lemma \ref{inflation canonical class} and Theorem \ref{Cieliebak} also work in this case even though $b_2^+=1$.
\qed\end{proof}
Note that $N(d)\geq 2$ for all $d\geq 4$, hence in the first two cases we always get at least two inequivalent contact structures. In a similar way we can use other geography results from \cite {MHgeo} to find more inequivalent contact structures on the same level on simply-connected 5-manifolds $X$ of the form $\#rS^2\times S^3$ and $\#rS^2\times S^3\#S^2\tilde{\times}S^3$. 

\begin{rem} In \cite{Ler} E.~Lerman considered on $M=S^2\times S^2$ the symplectic forms
\begin{equation*}
\omega_{a,b}=a\omega_1+b\omega_2,
\end{equation*}
where $\omega_i$ is the pull-back by the projection onto the $i$-th factor of the standard area form with integral one on $S^2$ and $a>b\geq 1$ are coprime integers. Since the symplectic class is indivisible and $M$ is spin it follows that the Boothby-Wang total space $X$ is diffeomorphic to $S^2\times S^3$. The first Chern class of $M$ is $c_1(M)=2[\omega_1]+2[\omega_2]$ for all $a,b$ and the level of the induced contact structure $\xi_{a,b}$ on $X$ is
\begin{equation*}
d(\xi_{a,b})=2(a-b).
\end{equation*}
Hence if the difference $a-b$ is fixed, we get contact structures on the same level. Lerman asks if $\xi_{a',b'}$ and $\xi_{a,b}$ on the same level are equivalent as contact structures. Unfortunately, we cannot answer this question with Corollary \ref{cor different contact str} because these contact structures all arise from symplectic forms with $d(K)=2$.
\end{rem}

\section*{Acknowledgements} The content of this article is part of the author's Ph.D.~thesis, submitted in May 2008 at the University of Munich. I would like to thank D.~Kotschick for supervising the thesis and K.~Cieliebak, T.~Jentsch and O.~van Koert for helpful comments. I would also like to thank the referee for suggesting ways to improve the article. Finally, I am grateful to the {\em Studienstiftung des deutschen Volkes} and the {\em Deutsche Forschungsgemeinschaft (DFG)} for financial support.

\bigskip
\bigskip

\end{document}